\def\div{ \hbox{\rm div}\,  }
\newtheorem{theorem}{Theorem}[section]
\newtheorem{lemma}{Lemma}[section]
\newtheorem{remark}{Remark}[section]
\newtheorem{proposition}{Proposition}[section]
\numberwithin{equation}{section}
\begin{document}
\title{\textbf{Global dynamics for the generalized chemotaxis-Navier-Stokes system in $\mathbb{R}^3$}}

\author{Qingyou He, Ling-Yun Shou, and Leyun Wu}

\date{}

\renewcommand*{\Affilfont}{\small\it}
\maketitle
\begin{abstract}
 We consider the chemotaxis-Navier-Stokes system with generalized fluid dissipation in $\mathbb{R}^3$:
 \begin{eqnarray*}
	\begin{cases}
\partial_t n+u\cdot \nabla n=\Delta n-
\nabla \cdot (\chi(c)n \nabla c),\\
\partial_t c+u \cdot \nabla c=\Delta c-nf(c),\\
\partial_t u +u \cdot \nabla u+\nabla P=-(-\Delta)^\alpha u-n\nabla \phi,\\
\nabla \cdot u=0,
	\end{cases}
\end{eqnarray*}
which describes the motion of swimming bacteria or bacillus subtilis suspended to water flows. First, we prove some blow-up criteria of strong solutions to the Cauchy problem, including the Prodi-Serrin type criterion ($\alpha>\frac{3}{4}$) and the Beir${\rm\tilde{a}}$o da Veiga type criterion $(\alpha>\frac{1}{2})$.  Then, we verify the global existence and uniqueness of strong solutions for arbitrarily large initial fluid velocity and bacteria density for $\alpha\geq \frac{5}{4}$.  Furthermore, in the scenario of $\frac{3}{4}<\alpha<\frac{5}{4}$, we establish uniform regularity estimates and  optimal time-decay rates of global  solutions if the $L^2$-norm of initial data is small. To our knowledge, this is the first result concerning the global existence and large-time behavior of strong solutions for the chemotaxis-Navier-Stokes equations with possibly large oscillations. 
 \end{abstract}

 \noindent{\bf Keywords.} Chemotaxis-Navier-Stokes system;  Generalized dissipation; Blow-up criteria; Global existence; Optimal time decay.
\\
2020 {\bf MSC.} 35A01; 35B40; 35B44; 35K55; 92C17.

\section{Introduction}

In this paper, we explore mathematical representations that capture the behavior of oxygen, swimming bacteria, and viscous, incompressible fluids.
We consider the Cauchy problem of the chemotaxis-Navier-Stokes system with generalized fluid dissipation:
\begin{equation}\label{eq1-1}
	\begin{cases}
\partial_t n+u\cdot \nabla n=\Delta n-
\nabla \cdot (\chi(c)n \nabla c),\\
\partial_t c+u \cdot \nabla c=\Delta c-nf(c),\\
\partial_t u +u \cdot \nabla u+\nabla P=-(-\Delta)^\alpha u-n\nabla \phi,\\
\nabla \cdot u=0, 
	\end{cases}
\end{equation}
supplemented with the initial condition
\begin{equation}\label{eq1-2}
	\left(n(t,x), c(t,x), u(t,x)\right)|_{t=0}=\left(n_0(x), c_0(x), u_0(x)\right), \quad  x\in\mathbb{R}^3.
\end{equation}
Here $n=n(t,x): \mathbb{R}^{+}\times\mathbb{R}^3\rightarrow \mathbb{R}^{+}$, $c=c(t,x):\mathbb{R}^{+}\times\mathbb{R}^3\rightarrow \mathbb{R}^+,$ $u=u(t,x):\mathbb{R}^{+}\times\mathbb{R}^3\rightarrow \mathbb{R}^3$  and $P=P(t,x):\mathbb{R}^{+}\times\mathbb{R}^3\rightarrow  \mathbb{R}$, respectively, denote the density of the bacteria (cell) population, the concentration of the oxygen (chemotactic signal), the velocity of the fluid and the pressure.  $f(c)$ and $\chi(c)$ stand for the oxygen consumption rate and the chemotactic sensitivity respectively.  $\phi=\phi(x)$ denotes the  potential function produced by different physical mechanisms.   In addition,  $(-\Delta)^\alpha$ with $\alpha >0$  is the Laplace operator, which is defined by 
$${\widehat {(-\Delta)^\alpha f}}(\xi)=|\xi|^{2\alpha} \widehat f (\xi).$$

  When $\alpha=1$, \eqref{eq1-1} becomes the classical chemotaxis-Navier-Stokes equations.
Usually, one  refers the system 
 to the chemotaxis-Stokes equation when the convection term $u\cdot \nabla u$ is absent in \eqref{eq1-1}$_3$.
The classical chemotaxis-Navier-Stokes equations  was introduced by Tuval et al. \cite{PANS2005} to describe the dynamics of swimming bacteria or bacillus subtilis. According to experiments, the process involves bacteria moving towards areas with a higher concentration of oxygen as a source of energy. The movement of these bacteria also affects the motion of the surrounding fluid, due to their weight. Furthermore, both the bacteria and the oxygen are transported through the water by a convective motion.

\vspace{2mm}

It should be noted that \eqref{eq1-1} with fractional Laplacian operators ($0<\alpha <1$) is physically relevant. Replacing the standard Laplacian operator, these fractional diffusion operators model the so-called anomalous diffusion, 
 a topic that has been extensively explored in physics, probability theory, and finance (see, for instance, \cite{AT2005,Jara2009CPAM,Mellet2011ARMA}).
The deployment of fractional diffusion operators in \eqref{eq1-1} enables the study of long-range diffusive interactions. 
  In addition, \eqref{eq1-1} with hyper-viscosity ($\alpha > 1$) is 
 employed in turbulence modeling to regulate the effective range of non-local dissipation
and to make more efficient numerical resolutions (see, e.g., \cite{FKPPRWZ2008PRL}).


\vspace{2mm}

 Before going into further discussion of the chemotaxis fluid models, we would like to mention that the classical model for cell motion was proposed by Patlak \cite{patlak1953} and Keller and Segel \cite{KS1970,ks1971}. It consists of the dynamics of cell density $n=n(t,x)$ and the concentration of chemical attractant substance $c=c(t,x)$:
\begin{equation}\label{sks}
\begin{cases}
\partial_t n=\Delta n-\nabla \cdot(n \chi(c) \nabla c),\\
\partial_t c=\Delta c-a c+b n,
\end{cases}
\end{equation}
where $\chi(c)$ is the chemotatic sensitivity, and $a$ and $b$ are the decay and production rate of the chemical, respectively. 
The system \eqref{sks}  has been extensively studied by many authors, and a comprehensive list of partial results can be found in 
\cite{wxy2016, arumugam2021, wm2016s,KimYao-2012-SIAM,bookPerthame} and references therein.

\vspace{2mm}

Then, we recall the three-dimensional generalized incompressible Navier-Stokes equations
\begin{equation}\label{ins}
	\begin{cases}
\partial_t u +u \cdot \nabla u+\nabla P=-(-\Delta)^{\alpha} u,\\
\nabla \cdot u=0.
	\end{cases}
\end{equation}
When $\alpha=1$, the system \eqref{ins} reduces to the classical incompressible Navier-Stokes equations. Global weak solutions with finite energy were constructed in the celebrated works by Leary \cite{lery1934} and Hopf \cite{hopf1950}. There are two types of blow-up criteria, namely, for maximal existence time $T^*>0$, the Prodi-Serrin type criterion (see \cite{prodi1959,serrin1961})
\begin{equation*}
\int_0^{T_*}\|u\|_{L^{q}}^{p} \,dt =+\infty
\end{equation*}
with $\frac{2}{p}+\frac{3}{q}\leq 1$ and $3<q\leq \infty$, and the Beir$\tilde{{\rm a}}$o da Veiga type criterion (cf. \cite{veiga1995})
\begin{equation*}
\int_0^{T_*}
\|\nabla u\|_{L^{q}}^{p}\,dt=+\infty
\end{equation*} with  
$\frac{2}{p}+\frac{3}{q}\leq 2 $ and $\frac{3}{2}<q\leq \infty.$
Both of these criteria are important tools for studying the regularity of weak and strong solutions to the Navier-Stokes equations, and have been used extensively in the literature. For more relevant work, interested readers can partially refer to  \cite{pll1996,gs2015,BS,RRS2016} and references therein. In addition, $\alpha=\frac{5}{4}$ is often referred as  Lions' critical exponent.  For $\alpha\geq \frac{5}{4}$, 
the existence of a global classical solution has been established in \cite{jll1969}. For general $\alpha<\frac{5}{4}$, the existence and regularity results can be found in, e.g., 
\cite{TY1, ZhouAIHP, CH1,Wu-2004-DPDE,Wu-2005-CMP}.

\vspace{2mm}

The classical chemotaxis-Navier-Stokes equations, i.e., $\alpha=1$, have been studied extensively with many significant results. Quite a lot of important progress has been made on initial boundary value problems for the chemotaxis-(Navier)-Stokes system, cf. \cite{Lorz2010,ZZ-2021-JDE,wm2023JEMS,wm2012,wm2019,wm2016,wm2017,wm2015,cl2018,lj2016,wx2016,wx2015,wwx2021,tw2016,wy2017,px2019,kz2019} and references therein. Concerning the Cauchy problem, Duan et al. \cite{duan2010} studied the global existence and decay rates of classical solutions over $\mathbb{R}^3$,  provided that the initial datum  $(n_0, c_0, u_0)$ is a small smooth perturbation of the constant state $(n_\infty, 0, 0)$
with $n_\infty \geq 0$ in $H^3(\mathbb{R}^3)$. Moreover, by constructing some proper free energy functional and deriving some uniform a priori estimates, the authors \cite{duan2010} also derived the global existence of weak solutions to the Cauchy problem over $\mathbb{R}^2$. 
Immediately afterwards,  by deriving an entropy, proposing a regularization of the system and a compactness argument to pass to the limit,  Liu and Lorz \cite{liu2011} proved the global existence of  weak solutions for the three-dimensional chemotaxis-Stokes system, i.e., without the convective term $u\cdot\nabla u$. Lorz \cite{lorz2012} showed the global existence of weak solutions  in $\mathbb{R}^3$ with small initial $L^{\frac{3}{2}}$-norm.  
Zhang and Zheng \cite{zz2014} established global well-posedness for the Cauchy problem of the two-dimensional chemotaxis-Navier-Stokes equations by introducing the Zygmund spaces and then establishing the estimates of 
$\|n(t)\|_{Llog L}$ and $\|c(t)\|_{H^1}$. Chae, Kang and Lee \cite{ckl2013} obtained the local existence of regular solutions with $(n_0,u_0)\in H^{m}(\mathbb{R}^d)$ and $c_0\in H^{m+1}(\mathbb{R}^d)$ with $s\geq3$ and $d=2,3$.
In \cite{ckl2014},  they also presented some blow-up criteria and constructed global-in-time solutions for the three-dimensional chemotaxis-Stokes equations under the some smallness conditions of initial data. For the Cauchy problem of the self-consistent chemotaxis-fluid system, Carrillo et al. \cite{cpx2023} established some blow-up criteria of classical solutions and proved the global well-posedness for the two-dimensional 
 case. In addition, they \cite{cpx2023} also obtained the global weak solution with small $c_0$ for the three-dimensional chemotaxis-Stokes flow. 
Recently, Zeng, Zhang and Zi \cite{ZZZ-2021-JFA}  investigated the enhanced dissipation and blow-up suppression for the two-dimensional Patlak-Keller-Segel-Navier-Stokes system near the  Couette flow (also refer to He \cite{He-2023-SIAM} and Hu, Kiselve and Yao \cite{HuKiselevYao1} in different contexts).


When the fractional effect is taken into account, to our knowledge, there are few mathematical results for  chemotaxis-fluid systems so far.  We refer to \cite{LLZ-2019-NA,LLZ-2022-NA} in the case that the fractional diffusion appears in the population density equation.  Concerning the chemotaxis-Navier-Stokes equations \eqref{eq1-1}, Nie and Zheng  \cite{NZ-2020-JDE} studied the global well-posedness in the two-dimensional case where the damping effect of the logistic source was used.  



\vspace{2mm}
However, although considerable progress has been made for the global-in-time evolution of the chemotaxis-Navier-Stokes
equations in three dimensions, all of these results are concerned with weak solutions or strong solutions under the small $H^{s}(\mathbb{R}^3)$-initial data with $s\geq3$. The purpose of  this paper is to 
 develop new blow-up and global existence results for the Cauchy problem \eqref{eq1-1}-\eqref{eq1-2}, which admit a class of large data in $H^{2}(\mathbb{R}^3)$ and especially hold for the case of classical chemotaxis-Navier-Stokes system. 
More precisely, the first result of this paper, i.e., Theorem \ref{Thlocal}, shows the local well-posedness of strong solutions to \eqref{eq1-1}-\eqref{eq1-2} for general initial data. Then, in Theorem \ref{Th1.1}, we study the mechanism of possible finite time blow-up and prove the Prodi-Serrin type criterion for any $\alpha >\frac{3}{4},$ in which the restriction coincides with the regularity theorems for the generalized Navier-Stokes equations. 
Theorem \ref{Th1.2} is devoted to the 
Beir$\tilde{{\rm a}}$o da Veiga type criterion for any $\alpha >\frac{1}{2}$.
Using the criteria established previously, we derive the global well-posedness for a large initial fluid velocity in the case $\alpha \geq \frac{5}{4}$.  
Finally, for the remainder case $\frac{3}{4}<\alpha<\frac{5}{4}$, we develop a new energy argument to control the high-order norms of solutions by some powers of the $L^2$-norm of $(n_0,c_0, u_0)$. This enables us to derive a unique global strong solution to the Cauchy problem \eqref{eq1-1}-\eqref{eq1-2}  and also obtain the optimal time-decay rates if the $L^2$-norm of initial data is small but higher order norms may be arbitrarily large, i.e., Theorem \ref{Th1.3}.

\paragraph{Main results.} Throughout this paper, we assume that the initial data $(n_0, c_0, u_0)$ fulfills 
\begin{equation}\label{H1}
\begin{aligned}
&n_{0},c_{0}\geq 0,\quad\quad \nabla\cdot u_{0}=0. 
\end{aligned}
\end{equation}
With regard to $\chi$, $f$ and $s$, we require 
\begin{equation}\label{H2}
\left\{
\begin{aligned}
&\chi\in W^{2,\infty}_{loc}(\mathbb{R}^{+}),\\
&f\in W^{1,\infty}_{loc}(\mathbb{R}^{+}),\quad f(0)=0,\quad    f(s)\geq0\quad\text{for}\quad s\geq0,\\
&\nabla\phi\in W^{2,\infty}(\mathbb{R}^3).
\end{aligned}
\right.
\end{equation}


 We first give the local well-posedness of strong solutions to the Cauchy problem \eqref{eq1-1}-\eqref{eq1-2}.

\begin{theorem}\label{Thlocal} 
 Let $\alpha>\frac{1}{2}$. Assume that \eqref{H1}-\eqref{H2} hold, and  the initial data    satisfies  $(n_0, c_0, u_0)\newline\in H^{2}(\mathbb{R}^3)$. Then,  there exists a maximal time $T_{*}>0$ such that the Cauchy problem \eqref{eq1-1}-\eqref{eq1-2} admits a unique strong solution  $(n,c,u)$ satisfying 
\begin{eqnarray}\label{localr}
 \begin{cases}
 n,c\in \mathcal{C}([0,T_{*});H^{2}(\mathbb{R}^3))\cap L^2(0,T_{*};\dot{H}^{1}\cap \dot{H}^{3}(\mathbb{R}^3)),\\
 u\in \mathcal{C}([0,T_{*});H^{2}(\mathbb{R}^3))\cap L^2(0,T_{*};\dot{H}^{\alpha}\cap \dot{H}^{2+\alpha}(\mathbb{R}^3)).
 \end{cases}   
\end{eqnarray}
\end{theorem}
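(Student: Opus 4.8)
The plan is to produce the solution by a linear iteration (a Friedrichs mollification of \eqref{eq1-1} would serve equally well), to close uniform bounds at the $H^{2}$ level on a short time interval whose length depends only on $\|(n_{0},c_{0},u_{0})\|_{H^{2}}$, to pass to the limit by compactness, and to prove uniqueness by a low-order energy estimate. Concretely, set $(n^{0},c^{0},u^{0})\equiv(n_{0},c_{0},u_{0})$ and, given the $k$-th iterate, define $(n^{k+1},c^{k+1},u^{k+1})$ as the solution of
\begin{equation*}
\begin{cases}
\partial_t n^{k+1}+u^{k}\cdot\nabla n^{k+1}=\Delta n^{k+1}-\nabla\cdot\big(\chi(c^{k})\,n^{k+1}\nabla c^{k+1}\big),\\
\partial_t c^{k+1}+u^{k}\cdot\nabla c^{k+1}=\Delta c^{k+1}-n^{k}f(c^{k+1}),\\
\partial_t u^{k+1}+u^{k}\cdot\nabla u^{k+1}+\nabla P^{k+1}=-(-\Delta)^{\alpha}u^{k+1}-n^{k}\nabla\phi,\qquad \nabla\cdot u^{k+1}=0,
\end{cases}
\end{equation*}
with data $(n_{0},c_{0},u_{0})$, solved in the order $c^{k+1}$, then $n^{k+1}$, then $u^{k+1}$: after a harmless truncation of $\chi$ and $f$, the $c^{k+1}$-equation is a standard semilinear parabolic equation, the $n^{k+1}$-equation is then linear, and the $u^{k+1}$-equation is a generalized Stokes system, each uniquely solvable in the class \eqref{localr} by classical theory, with $P^{k+1}$ recovered from $-\Delta P^{k+1}=\nabla\cdot(u^{k}\cdot\nabla u^{k+1}+n^{k}\nabla\phi)$.

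The core of the argument is a uniform bound for this sequence. First, since $n^{k}\ge0$, $f(0)=0$ and $f\ge0$ on $\mathbb{R}^{+}$, the maximum principle applied to the $c^{k+1}$-equation yields $0\le c^{k+1}(t,x)\le\|c_{0}\|_{L^{\infty}}$ for every $k$; because $H^{2}(\mathbb{R}^{3})\hookrightarrow L^{\infty}(\mathbb{R}^{3})$ controls $\|c_{0}\|_{L^{\infty}}$, the iterates $c^{k+1}$ stay in a fixed compact subset of $\mathbb{R}^{+}$ on which $\chi,\chi',\chi'',f,f'$ are bounded by \eqref{H2} — so the \emph{local} hypotheses may be used as uniform bounds, and the truncation is never active — while the drift--diffusion structure of the $n^{k+1}$-equation preserves $n^{k+1}\ge0$. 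Next, differentiating each equation up to order two, testing against the corresponding derivative of the unknown, and integrating by parts, one arrives at a differential inequality of the form
\begin{equation*}
\frac{d}{dt}\mathcal{E}^{k+1}+\mathcal{D}^{k+1}\le C\,\mathcal{Q}\big(\mathcal{E}^{k}+\mathcal{E}^{k+1}\big)\big(1+\mathcal{D}^{k}\big)+\tfrac12\,\mathcal{D}^{k+1},
\end{equation*}
where $\mathcal{E}^{k}=\|n^{k}\|_{H^{2}}^{2}+\|c^{k}\|_{H^{2}}^{2}+\|u^{k}\|_{H^{2}}^{2}$, $\mathcal{D}^{k}=\|n^{k}\|_{\dot H^{1}\cap\dot H^{3}}^{2}+\|c^{k}\|_{\dot H^{1}\cap\dot H^{3}}^{2}+\|u^{k}\|_{\dot H^{\alpha}\cap\dot H^{2+\alpha}}^{2}$, and $\mathcal{Q}$ is a polynomial.

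The delicate points are the chemotaxis couplings. After two derivatives, the top-order term of the $n^{k+1}$-equation is $\chi(c^{k})\,n^{k+1}\nabla\Delta c^{k+1}$ tested against $\nabla\Delta n^{k+1}$, which is $\lesssim\|\chi\|_{L^{\infty}}\|n^{k+1}\|_{L^{\infty}}\|c^{k+1}\|_{\dot H^{3}}\|n^{k+1}\|_{\dot H^{3}}$ and is therefore absorbed into the $\dot H^{3}$-dissipation of $n^{k+1}$ and $c^{k+1}$ up to a polynomial factor in $\mathcal{E}^{k+1}$, using $H^{2}\hookrightarrow L^{\infty}\cap W^{1,4}$, the embedding $\dot H^{1}\hookrightarrow L^{6}$, and Young's inequality; the remaining Leibniz terms ($\chi'n|\nabla c|^{2}$, $\chi\,\nabla n\cdot\nabla c$ and their derivatives), the reaction term $n^{k}f(c^{k+1})$ (estimated by integrating by parts, so that only $f'$, not $f''$, is needed), and the transport terms are handled similarly. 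For the fluid equation, testing $(-\Delta)(u^{k}\cdot\nabla u^{k+1})$ against $(-\Delta)u^{k+1}$ and cancelling the top symmetric piece leaves commutators bounded by $\|\nabla u^{k}\|_{L^{\infty}}\mathcal{E}^{k+1}$ with $\|\nabla u^{k}\|_{L^{\infty}}\lesssim\|u^{k}\|_{H^{2}}^{1-\theta}\|u^{k}\|_{\dot H^{2+\alpha}}^{\theta}$ for some $\theta\in(0,1)$ — an interpolation valid \emph{precisely} because $\alpha>\tfrac12$ — so the $\dot H^{2+\alpha}$-dissipation absorbs the loss, while $n^{k}\nabla\phi$ is bounded by $\|\nabla\phi\|_{W^{2,\infty}}\|n^{k}\|_{H^{2}}$ from \eqref{H2}. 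A continuity argument then closes: after integration in time, every nonlinear contribution carries either a small factor $\varepsilon$ (reabsorbed into $\mathcal{D}^{k+1}$) or a positive power of $T$ (since, after H\"older in time, the dissipative norms appear with exponent strictly below the full $L^{2}_{t}$ power), so there are $T>0$ and $M>0$ depending only on $\|(n_{0},c_{0},u_{0})\|_{H^{2}}$ with $\sup_{k}\big(\sup_{[0,T]}\mathcal{E}^{k}(t)+\int_{0}^{T}\mathcal{D}^{k}\,dt\big)\le M$.

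With the uniform bound in hand, the equations give $(\partial_t n^{k},\partial_t c^{k})$ bounded in $L^{2}(0,T;L^{2})$ and $\partial_t u^{k}$ bounded in $L^{2}(0,T;H^{-m})$ for some $m\ge0$ depending on $\alpha$; the Aubin--Lions--Simon lemma then yields a subsequence converging strongly in $L^{2}(0,T;H^{s}_{loc}(\mathbb{R}^{3}))$ for each $s<2$, which suffices to pass to the limit in all nonlinear terms and to identify a limit $(n,c,u)$ solving \eqref{eq1-1}--\eqref{eq1-2} in the class \eqref{localr}, the $H^{2}$ and $\mathcal{D}$ bounds persisting by weak lower semicontinuity; continuity in time with values in $H^{2}$ follows from the evolution equations by the standard energy argument for solutions in the class \eqref{localr}. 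For uniqueness, two solutions $(n_{i},c_{i},u_{i})$, $i=1,2$, with the same data have differences satisfying linear equations with sources linear in $(\delta n,\delta c,\nabla\delta c,\delta u)$ and coefficients controlled by the $H^{2}$-norms of $(n_{i},c_{i},u_{i})$; a Gr\"onwall estimate for $\|\delta n\|_{L^{2}}^{2}+\|\delta c\|_{H^{1}}^{2}+\|\delta u\|_{L^{2}}^{2}$ — the $H^{1}$ on $\delta c$ forced by the $\nabla\delta c$ produced by the chemotaxis term, and the time-integrability of $\|\nabla u_{1}\|_{L^{\infty}}$ again requiring $\alpha>\tfrac12$ — closes and forces the differences to vanish. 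Restarting the construction from times $<T$ then produces the maximal existence time $T_{*}$. I expect the main obstacle to be exactly this $H^{2}$ closure for the chemotaxis coupling — balancing the second-order chemotaxis term against the parabolic $\dot H^{3}$-gain of $c$, and the fluid transport term against the $\dot H^{2+\alpha}$-gain — so that all nonlinearities become absorbable by Young's inequality.
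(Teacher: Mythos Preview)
Your scheme and the paper's are structurally similar (iteration, uniform $H^{2}$ bounds, contraction/compactness, low-order uniqueness), and the uniqueness and compactness steps would go through. But the uniform-bound step has a genuine gap exactly where you flagged the ``main obstacle''.

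The issue is the top-order chemotaxis contribution. After two derivatives on the $n^{k+1}$-equation you correctly identify
\[
\Big|\int \chi(c^{k})\,n^{k+1}\,\nabla\Delta c^{k+1}\cdot\nabla\Delta n^{k+1}\,dx\Big|
\;\lesssim\;\|n^{k+1}\|_{L^{\infty}}\,\|c^{k+1}\|_{\dot H^{3}}\,\|n^{k+1}\|_{\dot H^{3}}.
\]
Young's inequality on the two dissipative factors gives either
$\varepsilon\|n^{k+1}\|_{\dot H^{3}}^{2}+C_{\varepsilon}\,\mathcal{E}^{k+1}\,\|c^{k+1}\|_{\dot H^{3}}^{2}$
or the symmetric split; either way one of the $\dot H^{3}$ pieces carries the coefficient $C\,\mathcal{E}^{k+1}$, which is \emph{not} small. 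So this term cannot be written as $\tfrac12\mathcal{D}^{k+1}$ plus $\mathcal{Q}(\mathcal{E}^{k}+\mathcal{E}^{k+1})(1+\mathcal{D}^{k})$, and your asserted differential inequality is false as stated. Equally, your time-integrated claim that ``every nonlinear contribution carries \ldots\ a positive power of $T$'' fails here: integrating gives a factor $\int_{0}^{T}\|c^{k+1}\|_{\dot H^{3}}^{2}\,dt$, and from the $c^{k+1}$-energy identity this quantity is $\|c_{0}\|_{H^{2}}^{2}+O(T)$, not $O(T^{\theta})$. With only this information, closing the bootstrap forces a smallness condition on $\|c_{0}\|_{H^{2}}$, which you do not assume.

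The paper deals with precisely this obstruction by splitting $c^{j}=c_{L}+(c^{j}-c_{L})$, where $c_{L}$ solves the free heat equation with data $c_{0}$. Then $\int_{0}^{T_{0}}\|\nabla c_{L}\|_{H^{2}}^{2}\,dt\to 0$ as $T_{0}\to 0$ (absolute continuity; the total integral is $\le\|c_{0}\|_{H^{2}}^{2}$), while $c^{j}-c_{L}$ has zero initial data so its $L^{2}_{t}\dot H^{3}$-norm is genuinely $O(T_{0})$-small. This yields a uniform-in-$j$ smallness of $\int_{0}^{T_{0}}\|\nabla c^{j}\|_{H^{2}}^{2}\,dt$, which is exactly what is needed to absorb the chemotaxis term at the $H^{2}$ level. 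Note one consequence: the resulting $T_{0}$ depends on $c_{0}$ through the rate at which $\int_{0}^{T}\|\nabla c_{L}\|_{H^{2}}^{2}\to 0$, not merely through $\|c_{0}\|_{H^{2}}$; your claim that $T$ depends ``only on $\|(n_{0},c_{0},u_{0})\|_{H^{2}}$'' is stronger than what this argument delivers. With this single correction (or an equivalent device that makes $\int_{0}^{T}\|\nabla c^{k+1}\|_{H^{2}}^{2}$ small uniformly in $k$), the rest of your outline is sound.
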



Then, we establish some time extensibility criteria of strong solutions with respect to different fractional fluid dissipation exponent. We first state a criterion of Prodi-Serrin type for \eqref{eq1-1}-\eqref{eq1-2}.


  \begin{theorem}\label{Th1.1} 
  Let $ \alpha> \frac{3}{4}$, and the assumptions of Theorem \ref{Thlocal} hold. If the maximal existence time $T_*$ of the solution $(n,c,u)$ to the Cauchy problem \eqref{eq1-1}-\eqref{eq1-2} is finite, then it holds that
\begin{equation}\label{eq1-3}
\int_0^{T_*}\left(\|n\|_{L^{q_1}}^{p_1}+\|u\|_{L^{q_2}}^{p_2} \right) dt =+\infty
\end{equation}
for any $(p_i, q_i) (i=1, 2)$ satisfying 
\begin{align}
\frac{2}{p_1}+\frac{3}{q_1}\leq 1,\quad \frac{2\alpha }{p_2}+\frac{3}{q_2}\leq 2\alpha-1,\quad  3<q_1\leq +\infty ,\quad 
\max\left\{\frac{3}{2},\frac{3}{2\alpha-1}\right\}<q_2\leq \infty.\label{piqi}
\end{align}
\end{theorem}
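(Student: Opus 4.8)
The plan is to argue by contradiction: suppose $T_*<\infty$ but the integral in \eqref{eq1-3} is finite. Using the local existence result (Theorem \ref{Thlocal}), it suffices to establish an a priori bound showing that $\|(n,c,u)(t)\|_{H^2}$ stays bounded as $t\uparrow T_*$, which then contradicts maximality. All the work is thus in deriving the energy estimates under the running hypothesis
\begin{equation*}
\int_0^{T_*}\left(\|n\|_{L^{q_1}}^{p_1}+\|u\|_{L^{q_2}}^{p_2}\right)dt \leq M <\infty.
\end{equation*}

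First I would collect the elementary bounds that do not require the hypothesis: nonnegativity of $n$ and $c$ (maximum principle for the parabolic equations), conservation of mass $\|n(t)\|_{L^1}=\|n_0\|_{L^1}$, the maximum bound $0\le c(t,x)\le \|c_0\|_{L^\infty}$ (from $f(c)\ge 0$ and $\nabla\cdot u=0$), and the basic energy identity for $u$ giving $u\in L^\infty_tL^2\cap L^2_t\dot H^\alpha$ up to a term involving $\int n\,u\cdot\nabla\phi$, which is controlled since $\nabla\phi\in L^\infty$ and $n\in L^\infty_tL^1$. These give a crude $L^2$-level framework. Then the core is a chain of differential inequalities for the higher norms. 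For $u$, I would test the velocity equation against $-(-\Delta)u$ (or work in $\dot H^1$) and use the $L^{p_2}_t L^{q_2}_x$ control of $u$ to absorb the convection term $\int (u\cdot\nabla u)\cdot\Delta u$: by Hölder and the Gagliardo–Nirenberg/Sobolev inequalities tuned to the fractional dissipation $\dot H^\alpha$, the subcriticality condition $\frac{2\alpha}{p_2}+\frac{3}{q_2}\le 2\alpha-1$ is exactly what lets this term be bounded by $\varepsilon\|u\|_{\dot H^{1+\alpha}}^2 + C\|u\|_{L^{q_2}}^{p_2}\|\nabla u\|_{L^2}^2$, yielding a Grönwall bound on $\|\nabla u\|_{L^2}$ (and then on $\|u\|_{\dot H^\alpha}$-in-time). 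For $c$, I would do $H^1$ then $H^2$ estimates: the coupling term $nf(c)$ is handled using the $L^\infty$ bound on $c$, the Lipschitz bound on $f$, and the $L^{p_1}_tL^{q_1}_x$ control of $n$ (together with the already-controlled $\|\nabla c\|$); crucially $c$ gains a derivative, so $\|\nabla c\|_{L^\infty_tL^2}$ and $\|\Delta c\|_{L^2_tL^2}$ come out, and then $\|\nabla c\|_{L^\infty_{t,x}}$-type quantities via Gagliardo–Nirenberg. For $n$, testing $n$'s equation against $n$ and then against $-\Delta n$, the chemotactic term $\nabla\cdot(\chi(c)n\nabla c)$ is the delicate one: after integration by parts it produces terms like $\int \chi(c)n\nabla c\cdot\nabla n$ and $\int \nabla(\chi(c)n\nabla c)\cdot\nabla\Delta n$, which I would bound using the $L^{q_1}$ control of $n$, the regularity of $c$ just obtained, and the parabolic smoothing $n\in L^2_t\dot H^1\cap L^2_t\dot H^3$ — again the condition $\frac{2}{p_1}+\frac{3}{q_1}\le 1$ is precisely the scaling that makes the top-order term absorbable into $\varepsilon\|n\|_{\dot H^3}^2$.

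The key structural point is that the three estimates are coupled, so I would set up a single Grönwall argument on the combined quantity
\begin{equation*}
\mathcal{E}(t):=\|n(t)\|_{H^2}^2+\|c(t)\|_{H^2}^2+\|u(t)\|_{H^2}^2,
\end{equation*}
with dissipation $\int_0^t(\|n\|_{\dot H^1}^2+\|n\|_{\dot H^3}^2+\|c\|_{\dot H^1}^2+\|c\|_{\dot H^3}^2+\|u\|_{\dot H^\alpha}^2+\|u\|_{\dot H^{2+\alpha}}^2)$, arranging that all the cross terms are either absorbed into the dissipation with a small constant or bounded by $g(t)\mathcal{E}(t)$ with $g\in L^1(0,T_*)$ — where $g$ is built out of $\|n\|_{L^{q_1}}^{p_1}$, $\|u\|_{L^{q_2}}^{p_2}$, and lower-order quantities already shown to lie in $L^1_t$. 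Then $\mathcal{E}(t)\le \mathcal{E}(0)\exp\!\big(C\!\int_0^{T_*}g\big)<\infty$, contradicting $T_*<\infty$.

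The main obstacle will be the chemotactic term in the $\dot H^2$ (equivalently $\dot H^3$-smoothing) estimate for $n$: it is the highest-order nonlinear interaction, it mixes derivatives of $n$, $c$, and $\chi(c)$, and controlling it genuinely requires pairing the $L^{q_1}$-in-space / $L^{p_1}$-in-time bound on $n$ with the improved regularity of $c$ and with fractional Gagliardo–Nirenberg interpolation; getting the exponents to close exactly at the stated Prodi–Serrin thresholds is where the care lies. A secondary difficulty is that when $\frac34<\alpha<\frac54$ the fluid dissipation is weaker than $\dot H^{2+\alpha}$ would naively suggest at second order, so the interpolation inequalities used to absorb $u$'s convective contribution must be stated for the fractional operator $(-\Delta)^\alpha$ and matched against the precise range $\max\{\frac32,\frac{3}{2\alpha-1}\}<q_2\le\infty$; I would isolate these as interpolation lemmas and apply them uniformly.
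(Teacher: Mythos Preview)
Your overall strategy---contradiction, energy estimates exploiting the Prodi--Serrin scaling so that the top-order nonlinear terms are absorbed via Gagliardo--Nirenberg and Young, then Gr\"onwall---is exactly the paper's. The one structural point you underplay is that a \emph{single} Gr\"onwall on the full $H^2$ energy $\mathcal{E}(t)$ does not close: at the $\dot H^1$ level for $n$, for instance, the right-hand side carries a factor $\|\nabla^2 c\|_{L^2}^4\|\nabla n\|_{L^2}^2$, which is cubic in $\mathcal{E}$ and only gives a local bound unless $\|\nabla^2 c\|_{L^2}^4$ is already known to lie in $L^1_t$. The paper therefore proceeds hierarchically in three steps---first $\|n\|_{L^2}$ together with $\|(c,u)\|_{H^1}$, then $\|n\|_{H^1}$ together with $\|(c,u)\|_{H^2}$, finally $\|n\|_{H^2}$---so that at each stage the Gr\"onwall coefficient is built from quantities already placed in $L^1(0,T_*)$ by the previous stage. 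Your phrase ``lower-order quantities already shown to lie in $L^1_t$'' hints at this, but the bootstrap has to be made explicit rather than folded into one inequality. Two smaller remarks: the paper also splits the $\dot H^1$ estimate for $u$ into the cases $\alpha\ge 1$ and $\tfrac34<\alpha<1$ (in the latter the term $\int \Delta u\cdot(u\cdot\nabla u)$ is handled via the dual pairing $\dot H^{\alpha-1}$--$\dot H^{1-\alpha}$ and a product law, rather than directly); and the forcing $\int n\,u\cdot\nabla\phi$ is controlled by $\|n\|_{L^2}\|u\|_{L^2}$, not by $n\in L^\infty_t L^1$ as you wrote.
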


Next, we also obtain the  blow-up criterion of Beir${\rm\tilde{a}}$o da Veiga type for  \eqref{eq1-1}-\eqref{eq1-2}.

\begin{theorem}\label{Th1.2} 
Let $ \alpha> \frac{1}{2}$, and the assumptions of Theorem \ref{Thlocal} hold. If the maximal existence time $T_*$ of the solution $(n,c,u)$ to the Cauchy problem \eqref{eq1-1}-\eqref{eq1-2} is finite,  it holds that 
\begin{equation}\label{eq1-5}
\int_0^{T_*}\left(
\|\nabla c\|_{L^{q_1}}^{p_1}+
\|\nabla u\|_{L^{q_2}}^{p_2}
\right) dt=+\infty
\end{equation}
for any $(p_i, q_i)$ {\rm(}$i=1,2${\rm)} satisfying 
\begin{equation}\label{piqi1}
    \begin{aligned}
    \frac{2}{p_1}+\frac{3}{q_1}\leq 1, \quad 
\frac{2\alpha}{p_2}+\frac{3}{q_2}\leq 2\alpha,\quad  
3<q_1\leq \infty,\quad \max\left\{1,\frac{3}{2\alpha}\right\}<q_2\leq \infty.    \end{aligned}
\end{equation}

\end{theorem}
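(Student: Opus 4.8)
The plan is to argue by contradiction: assume $T_*<\infty$ but $\int_0^{T_*}(\|\nabla c\|_{L^{q_1}}^{p_1}+\|\nabla u\|_{L^{q_2}}^{p_2})\,dt<\infty$, and then show that the $H^2$-norm of $(n,c,u)$ stays bounded on $[0,T_*)$, which by the local theory (Theorem \ref{Thlocal}) contradicts maximality of $T_*$. The first step is to recover the basic energy and $L^p$ bounds that do \emph{not} need the blow-up quantity: from \eqref{eq1-1}$_2$ and the maximum principle one gets $0\le c\le \|c_0\|_{L^\infty}$, and hence $\chi(c),f(c)$ and their derivatives are bounded along the flow; mass conservation gives $\|n\|_{L^1}=\|n_0\|_{L^1}$; and testing the $u$-equation against $u$ gives control of $\|u\|_{L^2}$ and $\int_0^{T_*}\|u\|_{\dot H^\alpha}^2$ up to the forcing term $\int n u\cdot\nabla\phi$, which is handled by $\|\nabla\phi\|_{L^\infty}$ and Gronwall once $\|n\|_{L^2}$ is under control. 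The role of assumption \eqref{piqi1} is exactly to make $\nabla c$ and $\nabla u$ "subcritical" for the corresponding parabolic/fractional-parabolic scaling, so that Gagliardo–Nirenberg interpolation against the top-order norms closes with a controllable time exponent.

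The heart of the proof is a coupled higher-order energy estimate. I would work with the functional $Y(t):=\|\nabla n\|_{L^2}^2+\|\nabla c\|_{H^1}^2+\|\nabla u\|_{L^2}^2$ (or, more robustly, the full $\dot H^2$ energy $\|n\|_{\dot H^2}^2+\|c\|_{\dot H^2}^2+\|u\|_{\dot H^2}^2$ together with the lower-order pieces), differentiate in time, and integrate by parts. The dangerous terms are: (i) the chemotaxis term $\nabla\cdot(\chi(c)n\nabla c)$, which upon differentiating produces $\nabla n\cdot\nabla c$, $n\nabla^2 c$, and lower-order material; (ii) the transport terms $u\cdot\nabla n$, $u\cdot\nabla c$, $u\cdot\nabla u$, whose commutators with $\nabla^2$ produce $\nabla u\cdot\nabla^2(\cdot)$; and (iii) the coupling $n\nabla\phi$ in the momentum equation, the mildest. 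For the transport commutators I would use the standard Kato–Ponce/commutator estimate to bound $\int \nabla^2(u\cdot\nabla f)\nabla^2 f$ by $\|\nabla u\|_{L^{q_2}}\|\nabla^2 f\|_{L^{r}}\|\nabla^2 f\|_{L^{2}}$ with $\tfrac1r=\tfrac12-\tfrac{1}{q_2}$, then interpolate $\|\nabla^2 f\|_{L^r}$ between $\|\nabla^2 f\|_{L^2}$ and the top norm ($\|f\|_{\dot H^3}$ for $n,c$, $\|u\|_{\dot H^{2+\alpha}}$ for $u$); the condition $\tfrac{2\alpha}{p_2}+\tfrac{3}{q_2}\le 2\alpha$ is precisely what lets the resulting term be absorbed into the dissipation $\|u\|_{\dot H^{2+\alpha}}^2$ plus $\|\nabla u\|_{L^{q_2}}^{p_2}Y$. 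For the chemotaxis terms the worst is $\int \nabla^2(n\nabla^2 c)\cdot(\dots)$; here one trades derivatives using $\|\nabla^2 c\|_{L^{q_1}}$-type quantities controlled via interpolation from $\|\nabla c\|_{L^{q_1}}$ and the parabolic smoothing of $c$ (the $\dot H^3$ dissipation), with the exponent bookkeeping governed by $\tfrac{2}{p_1}+\tfrac{3}{q_1}\le 1$. After all terms are estimated one arrives at a differential inequality of the form
\begin{equation*}
\frac{d}{dt}Y+\mathcal{D}\le C\bigl(1+\|\nabla c\|_{L^{q_1}}^{p_1}+\|\nabla u\|_{L^{q_2}}^{p_2}\bigr)\bigl(Y+Y^{\theta}\bigr),
\end{equation*}
where $\mathcal D$ collects the good dissipative norms; Grönwall (together with a bootstrap to control the superlinear $Y^\theta$ term on the maximal interval, or an a priori smallness/continuation argument) then yields $\sup_{[0,T_*)}Y<\infty$.

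The main obstacle I anticipate is the interplay between the two \emph{different} dissipations (standard $\Delta$ for $n,c$ versus fractional $(-\Delta)^\alpha$ for $u$, possibly with $\alpha<1$) inside the same coupled system: the chemotactic nonlinearity needs the parabolic gain of $c$, while the fluid terms only have a weaker fractional gain, and the quantity actually controlled by hypothesis is $\nabla u$, not $u$ itself. Making the exponent conditions in \eqref{piqi1} do exactly the work needed — i.e., verifying that every product term can be split as "(blow-up quantity) $\times\, Y$" plus "$\varepsilon\times$(dissipation)" with all Gagliardo–Nirenberg exponents in the admissible range, uniformly for $\alpha>\tfrac12$ — is the delicate computational core. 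A secondary subtlety is propagating $L^p$ bounds for $n$ (needed to feed the energy estimate and to close the $\|n\|_{L^2}$ control): one runs an $L^p$ energy estimate on the $n$-equation, where the chemotaxis term contributes $\int |\nabla c|\,|\nabla(n^{p/2})|\,n^{p/2-\mathrm{const}}$-type quantities absorbed using $\|\nabla c\|_{L^{q_1}}$ and the diffusive term, again matching \eqref{piqi1}$_1$; this should be done \emph{before} the $\dot H^2$ estimate so that all low-order norms are already available.
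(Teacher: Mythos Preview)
Your overall contradiction strategy is right, but the proposed all-at-once $\dot H^2$ energy with a conclusion of the form
\[
\frac{d}{dt}Y+\mathcal{D}\le C\bigl(1+\|\nabla c\|_{L^{q_1}}^{p_1}+\|\nabla u\|_{L^{q_2}}^{p_2}\bigr)\bigl(Y+Y^{\theta}\bigr)
\]
does not close: if $\theta>1$, Gr\"onwall alone does \emph{not} prevent finite-time blow-up of $Y$ even when the coefficient is integrable, and the vague appeal to ``bootstrap \dots\ or an a priori smallness/continuation argument'' is exactly the missing step. You would need to produce the a priori bound some other way, and you have not indicated how.

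The paper avoids this entirely by a \emph{hierarchical} scheme in which every differential inequality is linear in the energy at that level, with coefficients already controlled by earlier levels. The order is: (i) $\|n\|_{L^2}^2$ first, using $\|\nabla c\|_{L^{q_1}}$ directly in the chemotaxis term via $\int \chi(c)\,n\,\nabla c\cdot\nabla n\le C\|\nabla c\|_{L^{q_1}}\|n\|_{L^{2q_1/(q_1-2)}}\|\nabla n\|_{L^2}$ and Gagliardo--Nirenberg, which yields $\tfrac{d}{dt}\|n\|_{L^2}^2+\|\nabla n\|_{L^2}^2\le C(1+\|\nabla c\|_{L^{q_1}}^{p_1})\|n\|_{L^2}^2$; (ii) then $\|u\|_{L^2}^2$; (iii) then $\|\nabla c\|_{L^2}^2$ and $\|\nabla u\|_{L^2}^2$, where the convective term is handled by the simple identity $|\int \Delta u\cdot(u\cdot\nabla)u|\le \|\nabla u\|_{L^3}^3\le C\|\nabla u\|_{L^{q_2}}\|\nabla u\|_{L^{2q_2/(q_2-1)}}^2$ (no commutator machinery needed), and $\tfrac{2\alpha}{p_2}+\tfrac{3}{q_2}\le 2\alpha$ is exactly what makes this absorbable into $\|\Lambda^{\alpha+1}u\|_{L^2}^2$ plus $\|\nabla u\|_{L^{q_2}}^{p_2}\|\nabla u\|_{L^2}^2$; (iv) finally, the $\dot H^2$ estimates of $c$, then $\dot H^1$ of $n$, then $\dot H^2$ of $u$, then $\dot H^2$ of $n$, each time with coefficients built only from lower-order quantities already bounded (these are literally Steps~2--3 of the Theorem~1.1 proof, reused verbatim). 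In particular the blow-up quantities $\|\nabla c\|_{L^{q_1}}$ and $\|\nabla u\|_{L^{q_2}}$ are only ever used at the \emph{lowest} levels (i) and (iii); once $\|(n,\nabla c)\|_{L^2}+\|u\|_{H^1}$ and $\int\|\Lambda^{\alpha+1}u\|_{L^2}^2$ are bounded, the higher-order estimates close by themselves. Your plan to run $L^p$ estimates for $n$ as a preliminary step is unnecessary here: the $L^2$ estimate of $n$ using $\|\nabla c\|_{L^{q_1}}$ is both the starting point and all that is needed.
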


\begin{remark}
For the classical chemotaxis-Navier-Stokes equation {\rm(}$\alpha =1${\rm)}, Chae, Kang and Lee {\rm\cite{ckl2014}}
proved the Prodi-Serrin type criterion with  
$p_1=2,\, q_1=\infty$ in \eqref{piqi1}{\rm;} 
Carrillo, Peng and Xiang {\rm\cite{cpx2023}} showed the  a  criterion 
with $\|n\|_{L^{q_1}}^{p_1}$ in \eqref{eq1-3} replaced by 
 $\|\nabla c\|_{L^{q_1}}^{p_1}$. 
Theorem \ref{Th1.1} encompasses the classical case $\alpha =1$ as presented in {\rm\cite{ckl2014}}. 
The restriction $\alpha>\frac{3}{4}$ coincides with the sharp condition for regularity theorems of the generalized Navier-Stokes equations {\rm(}eg. cf. {\rm\cite{TY1,ZhouAIHP,FAHNZ,KimJM,JW1,CH1})}.  
\end{remark}


\begin{remark}
There are some possible extensions for our blow-up criteria. First, if $\frac{5}{6}\leq \alpha \leq \frac{3}{2}$,  then similar computations verify that $\|n\|_{L^{q_1}}^{p_1}$ in \eqref{eq1-3} can be replaced by  $\|\nabla c\|_{L^{q_1}}^{p_1}$. Next, our results can be extended to the chemotaxis-Stokes system. In this case, the criteria \eqref{eq1-3} and \eqref{eq1-5} can be relaxed to
\begin{equation}\nonumber
	\int_0^{T_*}\|n\|_{L^{q_1}}^{p_1}dt =+\infty,
\end{equation}
and
\begin{equation}\nonumber
	\int_0^{T_*}\|\nabla c\|_{L^{q_1}}^{p_1}dt =+\infty,
\end{equation}
respectively. Here $(p_1, q_1) $ satisfies $\frac{2}{p_1}+\frac{3}{q_1}\leq 1\,\, \mbox{and}\,\, 3<q_1\leq +\infty. \,\, $.
\end{remark}



In addition, we state some results about the global-in-time existence and stability. For $\alpha\geq\frac{5}{4}$, we prove the global existence of the strong solution under the  smallness of $\|c_0\|_{L^{\infty}}$. Compared to conventional dissipation $\alpha =1$, 
our research requires only $c_0$ to be small, rather than all initial data $(n_0, c_0, u_0)$. This implies that strong dissipation with 
$\alpha \geq \frac{5}{4}$ has a favorable impact.



\begin{theorem}\label{Th1.3} 
 Let
$\alpha\geq \frac{5}{4}$. Assume that \eqref{H1} and \eqref{H2} hold, and $(n_{0},c_0,u_0)$ satisfies  $(n_0,  c_0, u_0)\newline\in H^{2}(\mathbb{R}^3)$. Additionally, there exists a generic constant $\delta_{0}>0$ such that if 
\begin{align}
\|c_{0}\|_{L^\infty}\leq \delta_{0},\label{c0small}
\end{align}
then there exists a unique global strong solution $(n,c,u)$ satisfying 
\begin{equation}\label{rThm1.3}
\left\{
\begin{aligned}
&n, c\in \mathcal{C}([0,T];H^{2}(\mathbb{R}^3))\cap L^2(0,T;\dot{H}^{1}\cap \dot{H}^{3}(\mathbb{R}^3)),\\
&u\in \mathcal{C}([0,T];H^{2}(\mathbb{R}^3))\cap L^2(0,T;\dot{H}^{\alpha}\cap \dot{H}^{2+\alpha}(\mathbb{R}^3))
\end{aligned}
\right.
\end{equation}
for any time $T>0$.
\end{theorem}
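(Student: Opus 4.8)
The plan is to combine the local existence theorem with a priori estimates and a continuation principle. By Theorem~\ref{Thlocal} there is a unique strong solution on a maximal interval $[0,T_*)$ in the class \eqref{localr}; suppose for contradiction that $T_*<\infty$. It then suffices to show that $\sup_{0\le t<T_*}\|(n,c,u)(t)\|_{H^2}$, together with the dissipative norms appearing in \eqref{localr}, stays finite, for then the local theory can be restarted near $T_*$ and the solution continued past $T_*$; equivalently, one checks that the Prodi--Serrin criterion of Theorem~\ref{Th1.1} (or the Beir\~{a}o da Veiga criterion of Theorem~\ref{Th1.2}) cannot be saturated. So the whole problem reduces to an a priori bound on $[0,T]$ for an arbitrary $T<T_*$, and this bound is allowed to depend on $T$ since the statement asserts no uniformity in time.

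First I would record the structural facts used throughout. Since $n_0\ge0$, the drift-diffusion equation \eqref{eq1-1}$_1$ preserves positivity, $n\ge0$, and integration gives mass conservation $\|n(t)\|_{L^1}=\|n_0\|_{L^1}$. Since $f(0)=0$ and $f\ge0$, the concentration solves $\partial_t c+u\cdot\nabla c-\Delta c=-nf(c)\le0$ with $c_0\ge0$, so the maximum principle yields $0\le c(t,x)\le\|c_0\|_{L^\infty}\le\delta_0$ on $[0,T_*)$; consequently $\|f(c)\|_{L^\infty}=O(\delta_0)$ is small, while $\chi(c),\chi'(c),\chi''(c),f'(c)$ remain bounded through the local $W^{2,\infty}$ and $W^{1,\infty}$ norms of $\chi$ and $f$ on $[0,\delta_0]$. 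These two facts — the uniform smallness of $c$ and the conserved mass of $n$ — are what will tame the chemotactic coupling and the buoyancy forcing, respectively.

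Next I would run a simultaneous energy estimate for $(n,c,u)$, first at the $L^2$ level and then differentiated up to order two. At the $L^2$ level one tests \eqref{eq1-1}$_1$ by $n$, \eqref{eq1-1}$_2$ by $c$ and by $-\Delta c$, and \eqref{eq1-1}$_3$ by $u$ (together, in parallel, with the $\dot H^1$ estimate for $u$, where the Navier--Stokes convection $u\cdot\nabla u$ is controlled precisely by the strong dissipation $\alpha\ge\frac54$, Lions' exponent, the $L^2$-level convection term vanishing by $\nabla\cdot u=0$). The genuinely dangerous contributions are the chemotactic cross term $\int\chi(c)\,n\,\nabla c\cdot\nabla n$, the coupling $\int nf(c)\,\Delta c$, and the buoyancy term $\int n\,\nabla\phi\cdot u$; by H\"older and Gagliardo--Nirenberg inequalities, $\nabla\phi\in L^\infty$, the conserved bound $\|n\|_{L^1}=\|n_0\|_{L^1}$, and the smallness $\|c\|_{L^\infty},\|f(c)\|_{L^\infty}=O(\delta_0)$, these are absorbed into $\varepsilon$ times the dissipation plus already-controlled terms, provided $\delta_0$ is small enough, and Grönwall closes the $L^2$ level. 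Since $\alpha\ge\frac54$, the resulting bound $u\in L^\infty_t L^2\cap L^2_t\dot H^\alpha$ (via Sobolev embedding) already furnishes the $u$-condition of \eqref{piqi} and the $\nabla u$-condition of \eqref{piqi1}, so the fluid part of the blow-up criteria is settled at this stage, and the forced fluid equation is bootstrapped to $H^2$ by the generalized Navier--Stokes theory for $\alpha\ge\frac54$. I would then repeat the procedure at the $\dot H^1$ and $\dot H^2$ levels for $n$ and $c$, the lower-order bounds feeding the interpolations and the improved integrability of $u$ handling the transport terms $u\cdot\nabla n$, $u\cdot\nabla c$, which propagates the bound to $H^2$: for every $T<T_*$, $\|(n,c,u)(t)\|_{H^2}^2+\int_0^t\big(\text{the dissipative norms in }\eqref{localr}\big)\,ds\le\mathcal A(T)<\infty$. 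Hence $T_*=\infty$, and uniqueness is already part of Theorem~\ref{Thlocal}.

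I expect the main obstacle to be closing the $L^2$- and $\dot H^1$-level energy estimates, because the coupling is non-triangular — the $n$-equation produces $\nabla c$-dependent terms, the $c$-equation $u$-dependent terms, and the $u$-equation $n$-dependent terms — so the three inequalities must be run at once, and the chemotactic cross term $\nabla\cdot(\chi(c)n\nabla c)$ tested against $n$ is, by scaling, supercritical in three dimensions. The resolution is exactly the interplay of the two hypotheses: $\|c\|_{L^\infty}\le\delta_0$ makes the worst chemotactic couplings small enough to be absorbed already at the $L^2$ level, while $\alpha\ge\frac54$ provides the extra regularity of $u$ needed both for the fluid estimates and for controlling the transport of $n$ and $c$; the remaining work is a careful bookkeeping of Gagliardo--Nirenberg exponents so that Grönwall applies at each order.
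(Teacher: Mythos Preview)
Your overall architecture is right and matches the paper: start the local solution from Theorem~\ref{Thlocal}, assume $T_*<\infty$, and contradict the Prodi--Serrin criterion of Theorem~\ref{Th1.1}. You also correctly see why $\alpha\ge\frac54$ matters on the fluid side: the $L^2$-energy for $u$ gives $u\in L^\infty_tL^2\cap L^2_t\dot H^\alpha$, and interpolation then yields $u\in L^{p_2}_tL^{q_2}_x$ with $\frac{2\alpha}{p_2}+\frac{3}{q_2}=\frac32\le 2\alpha-1$, which is exactly the $u$-half of \eqref{piqi}. The paper does precisely this in Lemma~\ref{lemma54blow} and the short paragraph that follows it.

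The gap is in the $n$-half. You assert that the chemotactic cross term $\int\chi(c)\,n\,\nabla c\cdot\nabla n$ is absorbed at the $L^2$ level ``by the smallness $\|c\|_{L^\infty},\|f(c)\|_{L^\infty}=O(\delta_0)$'', but this term carries no factor of $f$, and $\chi(c)\to\chi(0)$ as $\delta_0\to0$, which is generically $O(1)$; no H\"older/Gagliardo--Nirenberg split of this integral produces a small prefactor without already invoking higher norms of $c$ or $n$. In three dimensions the term is scaling-supercritical at the $L^2$ level (as you yourself note), and mass conservation $\|n\|_{L^1}$ is far too weak for the $q_1>3$ needed in \eqref{piqi}. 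The paper does not attempt a naive $L^2$ closure here; instead it invokes the coupled-functional argument of \cite[Proposition~2]{ckl2014} (or \cite[Lemma~3.1]{tao2011}), which runs a joint estimate on $n$ and $\nabla c$ exploiting the sign structure of $\eqref{eq1-1}_1$--$\eqref{eq1-1}_2$ and delivers the uniform bound $\sup_t\|n\|_{L^r}\le C_r$ for every $r<\infty$ once $\|c_0\|_{L^\infty}$ is small. That lemma is the missing ingredient; with it in hand, both halves of \eqref{piqi} are verified and Theorem~\ref{Th1.1} finishes the proof in one line, making your subsequent $\dot H^1$/$\dot H^2$ bootstrapping unnecessary.
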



\vspace{2mm}

 
Finally, under a mild condition of small energy, we have the global existence and optimal time-decay rates of strong solutions to the Cauchy problem~\eqref{eq1-1}-\eqref{eq1-2} in the case $\frac{3}{4}< \alpha<\frac{5}{4}$.

\begin{theorem}\label{thmglobal}
Let $\frac{3}{4}< \alpha<\frac{5}{4}$. Assume \eqref{H1}-\eqref{H2} and further that $\phi$ fulfills $|x|\nabla \phi\in W^{2,\infty}(\mathbb{R}^3)$ and $|x|^{1+\alpha}\nabla \phi\in L^{\infty}(\mathbb{R}^3)$. In addition, suppose that the initial data $(n_{0},c_{0},u_{0})$ fulfills  $n_0, c_0\in L^1(\mathbb{R}^3)$ and $(n_0, c_0, u_0)\in H^{2}(\mathbb{R}^3)$. There exists a constant $\delta_{1}>0$ such that if
\begin{align}
&\|(n_0, c_0,u_{0})\|_{L^2}\leq \delta_{1},\label{smallness2}
\end{align}
then the Cauchy problem \eqref{eq1-1}-\eqref{eq1-2}  admits a unique global strong solution $(n,c,u)$, which satisfies
\begin{equation}\label{uniformglobal}
\begin{aligned}
\sup_{t\in\mathbb{R}_+}\|(n, c, u)\|_{H^{2}}^2+\int_{0}^{\infty}\|(\nabla n,\nabla c, \Lambda^{\alpha}u)\|_{H^{2}}^2\,dt\leq C\|(n_{0}, c_0,u_{0})\|_{H^{2}}^2.
\end{aligned}
\end{equation}
Furthermore, the following time-decay estimates hold{\rm:}
\begin{equation}\label{decay}
\left\{
\begin{aligned}
&\|(n,c)(t)\|_{L^{p}}\leq C(1+t)^{-\frac{3}{2}(1-\frac{1}{p})},\quad\quad 1<p<\infty,\\
&\|(\nabla n,\nabla c)(t)\|_{L^2}\leq C(1+t)^{-\frac{5}{4}},\\
&\|u(t)\|_{L^{p}}\leq C(1+t)^{-\frac{3}{2\alpha}(\frac{1}{2}-\frac{1}{p})},\quad\quad 2<p\leq \frac{6}{3-2\alpha},\\
&\|\Lambda^{\alpha}u(t)\|_{L^2}\leq C(1+t)^{-\frac{1}{2}}.
\end{aligned}
\right.
\end{equation}
\end{theorem}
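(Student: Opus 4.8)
The strategy is a continuation argument built on the local solution of Theorem~\ref{Thlocal}: let $[0,T_*)$ be the maximal existence interval; it suffices to prove the a priori bound \eqref{uniformglobal} (together with the decay \eqref{decay}) uniformly on every $[0,T]$ with $T<T_*$, since then $T_*=+\infty$ — equivalently, the Prodi-Serrin integral in Theorem~\ref{Th1.1} is finite. The estimates split into two regimes glued by a bootstrap: a \emph{low-order} regime, in which \eqref{smallness2} keeps $\|(n,c,u)(t)\|_{L^2}$ of size $\delta_1$ and yields the decay rates, and a \emph{high-order} regime, in which only a uniform (not small) bound on $\|(n,c,u)(t)\|_{H^2}$ is sought, the point being that all nonlinear coefficients appearing in the $\dot H^2$ energy inequality are time-integrable precisely because of the low-order decay. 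Concretely, for $T<T_*$ set
\begin{equation*}
\mathcal{E}(T):=\sup_{0\leq t\leq T}\|(n,c,u)(t)\|_{H^{2}}^{2}+\int_{0}^{T}\|(\nabla n,\nabla c,\Lambda^{\alpha}u)(t)\|_{H^{2}}^{2}\,dt,
\end{equation*}
together with a weighted decay functional $\mathcal{M}(T)$ collecting $(1+t)^{\frac32}\|(n,c)(t)\|_{L^2}^2$, $(1+t)^{\frac52}\|(\nabla n,\nabla c)(t)\|_{L^2}^2$, $(1+t)^{\frac72}\|(\nabla^2 n,\nabla^2 c)(t)\|_{L^2}^2$, $\|u(t)\|_{L^2}^2$ and $(1+t)\|\Lambda^{\alpha}u(t)\|_{L^2}^2$; the aim is a simultaneous bootstrap bounding $\mathcal{E}(T)$ by $\|(n_0,c_0,u_0)\|_{H^2}^2$ and $\mathcal{M}(T)$ by the data, uniformly in $T$.

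\noindent\textbf{Preliminaries and low-order decay.}
First record the structural facts: \eqref{eq1-1}$_{1}$ is in divergence form with $\nabla\cdot u=0$, hence $\|n(t)\|_{L^{1}}=\|n_{0}\|_{L^{1}}$; since $f(0)=0$, $f\geq0$ and $n\geq0$, the maximum principle gives $0\leq c(t,x)\leq\|c_{0}\|_{L^{\infty}}$ and $\|c(t)\|_{L^{1}}\leq\|c_{0}\|_{L^{1}}$. Next, in the basic $L^{2}$ estimate the transport terms drop by incompressibility, and after controlling $\int\chi(c)n\nabla c\cdot\nabla n$ and $\int n\nabla\phi\cdot u$ by Gagliardo-Nirenberg — crucially using $\|\nabla c\|_{L^2}^2\leq\|c\|_{L^2}\|\nabla^2 c\|_{L^2}$ with $\|c\|_{L^2}\leq\delta_1$, and writing the $\phi$-term as a source in $L^q$ with $q>1$ via the weighted hypotheses on $\phi$ — a continuity argument built on \eqref{smallness2} keeps $\|(n,c,u)(t)\|_{L^{2}}\lesssim\delta_{1}$ on $[0,T_*)$. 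The decay rates then follow from a hierarchy of time-weighted energy estimates: for the relevant weight $(1+t)^{k}$ and order $j$ one bounds $\frac{d}{dt}\big[(1+t)^{k}\|\nabla^{j}(n,c,u)\|_{L^{2}}^{2}\big]+(1+t)^{k}\|\text{dissipation}\|_{L^{2}}^{2}$ by $k(1+t)^{k-1}\|\nabla^{j}(n,c,u)\|_{L^{2}}^{2}+(\text{nonlinear})$, and disposes of the borderline term by a Fourier-splitting argument (low frequencies dominated, via $n_{0},c_{0}\in L^{1}$, by $(1+t)^{-\frac32-\ldots}$, high frequencies absorbed into the dissipation) together with interpolation against lower-order norms already shown to decay; the nonlinear contributions are absorbed by Gagliardo-Nirenberg, using the $L^{2}$-smallness and the uniform $H^{2}$ bound of the next step (this is where the two bootstraps couple), and the forcing $n\nabla\phi$ is handled with $|x|\nabla\phi\in W^{2,\infty}$, $|x|^{1+\alpha}\nabla\phi\in L^{\infty}$, which place $n\nabla\phi$ and its derivatives in $L^{q}$ spaces with $q>1$ carrying enough decay. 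Running this over the weight/order hierarchy controls $\mathcal{M}(T)$; Gagliardo-Nirenberg interpolation among $L^{2}$, $\dot{H}^{1}$, $\dot{H}^{2}$ (for $(n,c)$) and $L^{2}$, $\dot{H}^{\alpha}$ (for $u$) then yields the full set of $L^{p}$ rates in \eqref{decay}.

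\noindent\textbf{High-order uniform bound.}
Apply $\Lambda^{j}$ ($j=1,2$) to \eqref{eq1-1}$_{1,2}$ and $\mathbb{P}\Lambda^{j}$ to \eqref{eq1-1}$_{3}$, and perform $L^{2}$ estimates. Using the Kato-Ponce product and commutator inequalities and Gagliardo-Nirenberg, every nonlinear term is bounded, after Young's inequality, by $\tfrac12\|(\nabla^{3}n,\nabla^{3}c,\Lambda^{2+\alpha}u)\|_{L^{2}}^{2}+g_{1}(t)+g_{2}(t)\|(\nabla^{2}n,\nabla^{2}c,\nabla^{2}u)\|_{L^{2}}^{2}$, where $g_{1},g_{2}$ are built from low-order quantities ($\|\nabla c\|_{L^{\infty}}$, $\|(n,\nabla c)\|_{L^{3}}$, powers of $\|(n,c)\|_{L^{2}}$, $\|\nabla(n,c)\|_{L^{2}}$, $\|u\|_{L^{2}}$, $\|\Lambda^{\alpha}u\|_{L^{2}}$, and the bounded $\|c\|_{L^{\infty}}$) and are integrable on $(0,\infty)$ by the decay carried by $\mathcal{M}$ — this is precisely where the rates $\|\nabla(n,c)\|_{L^{2}}\lesssim(1+t)^{-5/4}$, $\|(n,c)\|_{L^{2}}\lesssim(1+t)^{-3/4}$, $\|\Lambda^{\alpha}u\|_{L^{2}}\lesssim(1+t)^{-1/2}$ enter, and where $\alpha>\tfrac34$ is needed for the fractional interpolation exponents to be admissible. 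Gr\"onwall's inequality then yields $\mathcal{E}(T)\leq\big(\mathcal{E}(0)+\int_{0}^{\infty}g_{1}\big)\exp\!\big(\int_{0}^{\infty}g_{2}\big)\leq C\|(n_{0},c_{0},u_{0})\|_{H^{2}}^{2}$ uniformly in $T<T_*$, so \eqref{uniformglobal} holds and $T_*=+\infty$. Re-entering the time-weighted estimates with this uniform $H^{2}$ bound upgrades the top-order decay to its optimal form, which with interpolation completes \eqref{decay}.

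\noindent\textbf{Main obstacle.}
The crux is the chemotaxis term $\nabla\cdot(\chi(c)n\nabla c)$: it is quadratic, costs one derivative, and couples $n$ — for which there is \emph{no} smallness at the $H^{2}$ level — with $\nabla c$, so that the time-integrability of $g_{1},g_{2}$ in the $\dot H^{2}$ estimate hinges on already possessing the \emph{optimal} low-order decay; consequently the decay bootstrap and the uniform-bound bootstrap are genuinely intertwined and must be set up together with carefully matched weights, the smallness \eqref{smallness2} (together with interpolations such as $\|\nabla c\|_{L^2}^2\leq\|c\|_{L^2}\|\nabla^2 c\|_{L^2}$ and short-time parabolic smoothing) being what allows the borderline nonlinear interactions to be absorbed rather than producing uncontrolled growth. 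A further difficulty is that $\frac34<\alpha<\frac54$ lies strictly below Lions' exponent, so $(-\Delta)^{\alpha}$ cannot by itself absorb $u\cdot\nabla u$ at the $\dot{H}^{1}$/$\dot{H}^{2}$ level; one again leans on the decay of $\|u\|_{L^{2}}$ and $\|\Lambda^{\alpha}u\|_{L^{2}}$ and on fractional Gagliardo-Nirenberg inequalities with sharp exponents, the threshold $\alpha>\frac34$ being exactly what makes these — and the Prodi-Serrin criterion of Theorem~\ref{Th1.1} used for the continuation — available. Finally, since $\nabla\phi\notin L^{1}$, extracting decay from the forcing $n\nabla\phi$ requires the weighted hypotheses on $\phi$ rather than the plain boundedness in \eqref{H2}.
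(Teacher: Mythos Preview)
Your overall strategy differs substantially from the paper's, and while an intertwined decay/energy bootstrap of the kind you sketch might be made to work, it is both more delicate and unnecessary. The paper \emph{decouples} global existence from decay: the bootstrap hypothesis is not on a weighted functional $\mathcal{M}(T)$ but on the single quantity
\[
S_{f,\chi}^{2}\|\nabla c\|_{L^{3}}^{2}+\|u\|_{L^{3}}^{2}+\int_{0}^{t}\|\nabla u\|_{L^{2}}^{\frac{4\alpha}{4\alpha-3}}\,d\tau\;\leq\;M_{0},
\]
under which one proves genuine Lyapunov inequalities such as $\tfrac{d}{dt}\|n\|_{L^{2}}^{2}+(1-C\|\nabla c\|_{L^{3}})\|\nabla n\|_{L^{2}}^{2}\leq 0$, and analogous ones at the $\dot H^{1}$ and $\dot H^{2}$ levels (the $\dot H^{2}$ estimate for $u$ closes by Gr\"onwall with exponent $\int_{0}^{t}\|\nabla u\|_{L^{2}}^{4\alpha/(4\alpha-3)}\leq M_{0}$, not via decay). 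The bootstrap then closes because each piece of the quantity above interpolates between the small $\|(n_{0},c_{0},u_{0})\|_{L^{2}}$ and the merely bounded $\|\nabla^{2}(n_{0},c_{0},u_{0})\|_{L^{2}}$ --- e.g.\ $\|\nabla c\|_{L^{3}}\leq C\|c\|_{L^{2}}^{1/4}\|\nabla^{2}c\|_{L^{2}}^{3/4}$ --- so \eqref{uniformglobal} is obtained \emph{before} any decay is known. Only afterwards is \eqref{decay} proved, and by direct differential inequalities (e.g.\ $\tfrac{d}{dt}\|n\|_{L^{p}}^{p}+C(\|n\|_{L^{p}}^{p})^{1+2/(3(p-1))}\leq 0$ from Gagliardo--Nirenberg against the conserved $\|n\|_{L^{1}}$), not by Fourier splitting.

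A second point you treat only vaguely but which is central in the paper is that the forcing $n\nabla\phi$ is handled throughout by \emph{Hardy's inequality}: for instance $\int n\nabla\phi\cdot u\,dx\leq\||x|^{1+\alpha}\nabla\phi\|_{L^{\infty}}\,\|n/|x|\|_{L^{2}}\,\|u/|x|^{\alpha}\|_{L^{2}}\leq C\|\nabla n\|_{L^{2}}\|\Lambda^{\alpha}u\|_{L^{2}}$, which is exactly what makes the $L^{2}$ estimate of $u$ close \emph{uniformly in time} with no Gr\"onwall growth; similar uses of Hardy appear at every order. Your proposal to ``place $n\nabla\phi$ in $L^{q}$ with $q>1$'' does not by itself produce this uniform control. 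In short, the paper's route buys a clean separation of the two halves of the theorem and avoids the circularity you flag as the ``main obstacle''; your route, as written, leaves that circularity unresolved.
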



\begin{remark}
Theorem \ref{thmglobal} provides the first result on the global existence of strong solutions subject to initial data that are of small $L^2$-energy but possibly large oscillations.
In particular, even for the classical well-established case of $\alpha=1$, our findings present novel insights, as the quantity $\|(n_0, c_0, u_0)\|_{H^3}$ 
 in {\rm\cite{duan2010}} was supposed to be sufficiently small.
Moreover, our method allows to obtain uniform bounds with respect to time and establish the decay estimates.
\end{remark}

{\begin{remark}
The time-decay rates in \eqref{decay} are optimal. In fact, the rates of $n_0$ and $c_0$ coincide with the sharp rates of the heat equation under the $L^1$-assumption of initial data, and the rates of $u_0$ are the same as those of the fractional heat equation subject to the initial data in $L^2(\mathbb{R}^3)$. For the sharpness of the rates, one can refer to, e.g. {\rm\cite{BS}}.
\end{remark}}

In what follows, we will make some illustrations for the proof of Theorem \ref{thmglobal}. Our goal is to establish uniform regularity estimates and extend the local solution to the global one. However, due to the fact that the system \eqref{eq1-1} does not possess a dissipative free energy, it is difficult to obtain uniform-in-time evolution without the smallness in $H^2(\mathbb{R}^3)$.  To overcome this, we claim following refined bounds
\begin{align}
\|\nabla c\|_{L^3}^2+\|u\|_{L^3}^2+\int_{0}^{t}\|\nabla u\|_{L^2}^{\frac{4\alpha}{4\alpha-3}}\,d\tau\ll 1. \label{bootstrap}
\end{align}
Such a condition is weaker than higher order smallness assumptions based on the classical linearization argument (e.g., cf. {\rm\cite{duan2010}}). Under \eqref{bootstrap}, we are able to derive some Lyapunov inequalities and study the large-time stability.  First, it is observed that the $L^2$-energy of $c$ is uniform with respect to time due to $f\geq0$, and $n$ satisfies
\begin{equation}\nonumber
\begin{aligned}
&\frac{1}{2}\frac{d}{dt}\|n\|_{L^2}^2+(1-C\|\nabla c\|_{L^3})\|\nabla n\|_{L^2}^2\leq 0.
\end{aligned}
\end{equation}
This leads to the uniform $L^2$-estimate of $n$ in view of \eqref{bootstrap}.  In addition, the uniform $L^2$-estimate of $u$ can be obtained based on  Hardy's inequality and the estimates of $n$ at hand (see Lemma \ref{lemmabasic}). At the $\dot{H}^1$-regularity level, we obtain the following inequality:
\begin{equation}\nonumber
\begin{aligned}
&\frac{1}{2}\frac{d}{dt}\|\nabla c\|_{L^2}^2+(1-C\|u\|_{L^3}^2)\|\Delta c\|_{L^2}^2\leq C\|\nabla n\|_{L^2}^2,
\end{aligned}
\end{equation}
and
\begin{equation}\nonumber
\begin{aligned}
&\frac{1}{2}\frac{d}{dt}\|\nabla n\|_{L^2}^2+(\frac{3}{4}-C\|(\nabla c,u)\|_{L^3}^2)\|\Delta n\|_{L^2}^2\leq C\|\nabla^2 c\|_{L^2}^4 \|\nabla n\|_{L^2}^2,
\end{aligned}
\end{equation}
which allow us to capture the desired $\dot{H}^1$-norms for $c$ and $n$ under the condition \eqref{bootstrap}.
Furthermore, it should be noted that the $L^{\frac{4\alpha}{4\alpha-3}}_{t}(\dot{H}^1)$-norm of $u$ is a scaling-invariant quantity for the generalized incompressible Navier-Stokes equations and plays a key role in controlling the convective term $u\cdot\nabla u$ in $\eqref{eq1-1}$. Indeed, we obtain 
\begin{equation}\nonumber
\begin{aligned}
&\frac{1}{2}\frac{d}{dt}\|\nabla u\|_{L^2}^2+\|\Lambda^{\alpha}\nabla u\|_{L^2}^2\leq C\|\nabla n\|_{L^2}^2+C\|\Lambda^{\alpha}u\|_{L^2}^2+C\|\nabla u\|_{L^2}^{\frac{4\alpha}{4\alpha-3}+2}.
\end{aligned}
\end{equation}
This, together with \eqref{bootstrap} and the $L^2$ dissipation estimates of $n$ and $u$, implies the uniform $\dot{H}^1$-bound of $u$ (see Lemma \ref{lemmahigher}). With these, we also obtain the uniform bounds of the solution at the $\dot{H}^2$-regularity level (cf. Lemma \ref{lemmaH2}). By making use of these estimates and interpolation between $L^2$ energy and higher order norms, we succeed in performing a bootstrap procedure and justify \eqref{bootstrap} uniformly in time. Finally, based on these uniform bounds and Lyapunov-type arguments, we obtain the time-decay estimates of the global solution.

\vspace{2mm}

The rest of this paper is organized as follows. In Section \ref{P}, 
we introduce some notations that are used throughout the paper.  Section \ref{section3} is devoted to the proof of regularity criteria in Theorems \ref{Th1.1} and \ref{Th1.2}.
In Section \ref{section4}, we first show Theorem \ref{Th1.3} about the global existence  for $\alpha \geq \frac{5}{4}$ in Subsection \ref{subsectionglobal1}. Then, under the weaker dissipation ($\frac{3}{4}<\alpha<\frac{5}{4}$) and the mild assumption of small $L^2$-energy, we establish the uniform regularity estimates of global strong solutions with
large oscillations in Subsection \ref{subsectionglobal2} and further obtain the time-decay estimates in Subsection \ref{subsectiondecay}. In Section \ref{sectionextension}, we discuss some possible extensions and problems. Section \ref{section5} is the appendix concerning the proof of the local well-posedness in Theorem  \ref{Thlocal}.

\section{Preliminaries}\label{P}

First, we list some notations that are used frequently throughout the paper.

\paragraph{Notations.} Throughout this paper, $C>0$ denotes some  constant independent of the time, and $C_{T}>0$ is a constant which may depend on the time $T$.  $\mathcal{F}(f)$ and  $\mathcal{F}^{-1}(f)$ stand for the Fourier transform and the inverse transform of the function $f.$  Sometimes we write 
$$
\Lambda^{\sigma}= (-\Delta)^\frac{\sigma}{2}=\mathcal{F}^{-1} \Big( |\xi|^{2\sigma} \mathcal{F}(\cdot ) \Big),\quad\sigma\in\mathbb{R}.
$$
Let 
$W^{s, p}(\mathbb{R}^3)$ with $1\leq p\leq \infty$, $s\in\mathbb{R}$ be the standard fractional exponent Sobolev space on $\mathbb{R}^3$ with equivalent norms
$$
\|f\|_{W^{s, p}} \sim \|({\rm Id }-\Delta)^\frac{s}{2}\|_{L^p} \sim \|f\|_{L^p}+\|\Lambda^s f\|_{L^p}.
$$
 Especially, when $p=2,$ $H^s(\mathbb{R}^3)$ is the Sobolev space of exponent $s$ on $\mathbb{R}^3$ with the standard norm 
$$
\|f\|_{H^s}:=\Big(\int_{\mathbb{R}^3} (1+|\xi|^2)^s|\widehat f (\xi)|^2 \,d\xi \Big)^{\frac{1}{2}}<\infty
$$
and the inner product 
$$\langle f, g\rangle=\int_{\mathbb{R}^3}  (1+|\xi|^2)^s \widehat f (\xi) \overline{\widehat g (\xi)} \,d\xi.$$
Furthermore, we denote $\dot{H}^s(\mathbb{R}^3)$ with $s\in\mathbb{R}$ the homogeneous Sobolev space endowed with the norm
$$
\|f\|_{\dot{H}^s}:=\Big(\int_{\mathbb{R}^3} |\xi|^{2s}|\widehat f (\xi)|^2 \,d\xi \Big)^{\frac{1}{2}}\sim \|\Lambda^{s}f\|_{L^2}.
$$

We need the following generalized Sobolev type estimate.
\begin{lemma}\label{sobolev}
Let $0<\alpha <3$, $1\leq q <p<\infty$ and $\frac{1}{p}+\frac{\alpha}{3}=\frac{1}{q}$. There exists a constant 
$C \geq 0$ such that if $u \in \mathcal{S}'$ is such that 
$\widehat f$ is a function, then 
\begin{equation}\label{eqsobolev}
\|u\|_{L^p}\leq C \|\Lambda^\alpha u\|_{L^q}.
\end{equation}
\end{lemma}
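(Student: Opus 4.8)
\textbf{Proof proposal for Lemma \ref{sobolev}.}
The statement is the classical Hardy--Littlewood--Sobolev inequality expressed through the fractional Laplacian, so the plan is to reduce it to the boundedness of the Riesz potential. First I would write $u = \Lambda^{-\alpha}(\Lambda^\alpha u)$ in the sense of tempered distributions, which is legitimate here since $\widehat{f}$ is assumed to be a function and $0<\alpha<3$ keeps the kernel locally integrable. The operator $\Lambda^{-\alpha}$ is, up to a dimensional constant, convolution with the Riesz kernel $I_\alpha(x) = c_{3,\alpha}|x|^{\alpha-3}$ on $\mathbb{R}^3$; this is where the hypothesis $0<\alpha<3$ is essential, so that $|x|^{\alpha-3}$ defines a genuine (locally integrable, slowly decaying) convolution kernel.

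Next I would invoke the Hardy--Littlewood--Sobolev theorem: convolution with $|x|^{-\lambda}$ on $\mathbb{R}^d$ maps $L^q$ boundedly into $L^p$ whenever $1<q<p<\infty$ and $\frac1p = \frac1q - \frac{d-\lambda}{d}$. Here $d=3$, $\lambda = 3-\alpha$, so $\frac{d-\lambda}{d} = \frac{\alpha}{3}$, and the scaling relation becomes exactly $\frac1p + \frac{\alpha}{3} = \frac1q$, which is the hypothesis in the lemma. Thus
\[
\|u\|_{L^p} = \|I_\alpha * (\Lambda^\alpha u)\|_{L^p} \leq C \|\Lambda^\alpha u\|_{L^q},
\]
which is \eqref{eqsobolev}. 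The endpoint case $q=1$ (allowed by the statement's $1\le q$) is the weak-type endpoint of Hardy--Littlewood--Sobolev; in that borderline situation one replaces the strong $(1,p)$ bound by the weak-type estimate, or simply restricts attention to $q>1$ as is done in all later applications — I would note this and otherwise treat $1<q<p<\infty$.

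The only genuine subtlety, rather than an obstacle, is justifying the representation $u = I_\alpha * \Lambda^\alpha u$ for a general tempered distribution whose Fourier transform is merely a function: one argues by density, first establishing the inequality for Schwartz functions $u$ (where $\Lambda^{-\alpha}\Lambda^\alpha u = u$ holds on the Fourier side since $|\xi|^{-\alpha}|\xi|^{\alpha}=1$ away from the origin and $u$ being Schwartz controls behaviour at $\xi=0$), and then passing to the limit using that the right-hand side is finite. Since this is a standard textbook fact, I would cite a reference (e.g. Stein's \emph{Singular Integrals}) for the Riesz-potential mapping property and keep the proof to these few lines.
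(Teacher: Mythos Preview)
Your proposal is correct and is exactly the standard textbook argument (Riesz potential representation plus Hardy--Littlewood--Sobolev), including the correct observation that the endpoint $q=1$ only yields a weak-type bound. The paper itself does not supply a proof of this lemma at all; it is stated as a known generalized Sobolev inequality and immediately used to derive the Gagliardo--Nirenberg inequality in the subsequent lemma, so there is no alternative approach to compare against.
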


Next lemma is devoted to the Gagliardo–Nirenberg inequality. 
\begin{lemma}[Gagliardo-Nirenberg inequality]\label{lemmaGN}
For $1\leq r \leq \infty,$ $1\leq m <\infty,$ $0<\alpha <3,$ $0\leq \theta \leq 1$ and $\sigma <s$, assume $\Lambda ^\alpha u \in L^m(\mathbb{R}^3) $ and $\Lambda ^s u \in L^r(\mathbb{R}^3).$
If $$\frac{1}{p}-\frac{\sigma}{3}=
\theta\left(\frac{1}{m}-\frac{\alpha}{3} \right)
+(1-\theta)\left(\frac{1}{r}-\frac{s}{3}\right),
$$ then 
\begin{equation}\label{G-GN}
\|\Lambda^\sigma u\|_{L^p} \leq C \|\Lambda^\alpha u\|_{L^m}^\theta \|\Lambda^{s}u\|_{L^{r}}^{1-\theta}.
\end{equation}
\end{lemma}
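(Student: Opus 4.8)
The plan is to read \eqref{G-GN} as a scale-invariant interpolation inequality and prove it by a Littlewood--Paley decomposition. First I would record that the hypothesis on the exponents is exactly the dimensional (scaling) balance: setting $a_1:=\frac1m-\frac\alpha3$, $a_2:=\frac1r-\frac s3$ and $a_0:=\frac1p-\frac\sigma3$, the assumption reads $a_0=\theta a_1+(1-\theta)a_2$, which is what one obtains by testing the two sides against $u_\lambda(x)=u(\lambda x)$. The endpoint cases $\theta=0$ and $\theta=1$ are not genuine interpolations: for $\theta=0$ one has $a_0=a_2$, i.e. $\frac1p=\frac1r-\frac{s-\sigma}{3}$ with $s-\sigma>0$, and the claim $\|\Lambda^\sigma u\|_{L^p}\le C\|\Lambda^s u\|_{L^r}$ is precisely Lemma~\ref{sobolev} applied to $v=\Lambda^\sigma u$ (so that $\Lambda^{s-\sigma}v=\Lambda^s u$), after iterating if $s-\sigma\ge3$; note $p>r$ is forced since $s>\sigma$. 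The case $\theta=1$ is identical with $\alpha$ in place of $s$. Hence I may assume $0<\theta<1$.

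For the genuine interpolation I would decompose $u=\sum_{j\in\mathbb Z}\dot\Delta_j u$ into dyadic frequency blocks, where $\dot\Delta_j$ has Fourier support in $|\xi|\sim 2^j$, commutes with $\Lambda^\sigma$, and is uniformly bounded on each $L^q$. Bernstein's inequalities ($\|\Lambda^\gamma\dot\Delta_j u\|_{L^q}\sim 2^{j\gamma}\|\dot\Delta_j u\|_{L^q}$ and $\|\dot\Delta_j u\|_{L^p}\lesssim 2^{3j(1/q-1/p)}\|\dot\Delta_j u\|_{L^q}$ for $q\le p$) then give the two competing block bounds $\|\Lambda^\sigma\dot\Delta_j u\|_{L^p}\lesssim 2^{3j(a_1-a_0)}\|\Lambda^\alpha u\|_{L^m}$ and $\|\Lambda^\sigma\dot\Delta_j u\|_{L^p}\lesssim 2^{3j(a_2-a_0)}\|\Lambda^s u\|_{L^r}$. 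A one-line computation shows $a_1-a_0=(1-\theta)(a_1-a_2)$ and $a_2-a_0=\theta(a_2-a_1)$, so the two frequency exponents carry opposite signs; relabelling if necessary I assume $a_1>a_2$. Writing $A:=\|\Lambda^\alpha u\|_{L^m}$ and $B:=\|\Lambda^s u\|_{L^r}$, I sum by the triangle inequality, using the first bound on the low band $j\le N$ and the second on the high band $j>N$. Both are geometric series, yielding $\|\Lambda^\sigma u\|_{L^p}\lesssim 2^{3N(a_1-a_0)}A+2^{3N(a_2-a_0)}B$, and optimising the free integer $N$ to balance the two terms produces $\|\Lambda^\sigma u\|_{L^p}\lesssim A^\theta B^{1-\theta}$, since the balancing weight is $\frac{a_0-a_2}{a_1-a_2}=\theta$ on $A$ and its complement $1-\theta$ on $B$.

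The one step needing care is the per-block estimate, and this is the main obstacle: Bernstein only raises integrability, so the bound via $\|\Lambda^\alpha u\|_{L^m}$ is immediate precisely when $m\le p$, and its companion when $r\le p$. This is exactly the regime in which \eqref{G-GN} is invoked throughout the paper (the source exponents there are always $\le p$), so I would prove the lemma under the ordering $m,r\le p$, which is automatic in all our applications. The genuinely degenerate configurations with $\min(m,r)>p$, where one cannot localise in $L^p$ from a single lower-integrability source, do not arise here; if needed they are handled by passing to the square-function characterisation of $L^p$ for $1<p<\infty$ together with complex interpolation, reproducing $A^\theta B^{1-\theta}$ without the $\ell^1$ loss. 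The remaining points are routine: the uniform $L^q$-boundedness of the blocks $\dot\Delta_j$ follows from the Mikhlin multiplier theorem, the endpoint $r=\infty$ is summed directly on the high band, and the absolute convergence of $\sum_j\dot\Delta_j u$ in $L^p$ is standard.
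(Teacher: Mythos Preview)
Your Littlewood--Paley argument is correct under the restriction $m,r\le p$ you flag, and the optimisation over the splitting frequency $N$ is carried out properly. However, the paper proceeds very differently and much more briefly: it does not decompose in frequency at all. Instead it invokes the classical Gagliardo--Nirenberg inequality in the form
\[
\|\Lambda^{\sigma}u\|_{L^p}\leq C\|u\|_{L^q}^{\theta}\|\Lambda^{s}u\|_{L^{r}}^{1-\theta}
\]
(cited from the literature), and then applies the Sobolev-type embedding of Lemma~\ref{sobolev}, $\|u\|_{L^q}\le C\|\Lambda^\alpha u\|_{L^m}$ with $\frac1q+\frac\alpha3=\frac1m$, to replace the $\|u\|_{L^q}$ factor. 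The scaling identity in the statement is recovered by combining $\frac1q=\frac1m-\frac\alpha3$ with the exponent relation for the classical inequality. So the paper's proof is a two-line reduction to two known results, while yours is a self-contained derivation from Bernstein's inequalities; your route makes the frequency mechanism transparent and would in principle cover configurations not directly covered by the cited classical form, but at the cost of having to handle the ordering $m,r\le p$ separately (which, as you note, is the only regime actually needed here).
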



\begin{proof} By \cite{N} or \cite[Chapter 1]{bahouri1}, one has 
for any $1\leq q,r\leq \infty$, $\sigma<s$, and $\theta\in [0,1]$ and $\frac{1}{p}-\frac{\sigma}{3}=\frac{\theta}{q}+(1-\theta)(\frac{1}{r}-\frac{s}{3})$, if $u\in L^q (\mathbb{R}^3)$ and $\Lambda^s u\in L^r (\mathbb{R}^3)$. Then $\Lambda^{\sigma}u\in L^q(\mathbb{R}^3)$ and
\begin{equation}\label{GN}
\begin{aligned}
&\|\Lambda^{\sigma}u\|_{L^p}\leq C\|u\|_{L^q}^{\theta}\|\Lambda^{s}u\|_{L^{r}}^{1-\theta}.
\end{aligned}
\end{equation}
If let
\begin{equation}\label{beq-1}
\frac{1}{q}+\frac{\alpha}{3}=\frac{1}{m}
\end{equation}
 and 
\begin{equation}\label{beq-2}
\frac{1}{p}=\frac{\theta}{q}+\frac{1-\theta}{r},
\end{equation}
then by Lemma \ref{sobolev} and \eqref{GN}, we obtain
\eqref{G-GN}. 
It can be easily checked that $\frac{1}{p}-\frac{\sigma}{3}=
\theta\left(\frac{1}{m}-\frac{\alpha}{3} \right)
+(1-\theta)\left(\frac{1}{r}-\frac{s}{3}\right)
$ holds due to \eqref{beq-1} and \eqref{beq-2}.
 \end{proof}

\begin{remark} We would like to point out that Lemma \ref{lemmaGN} is valid in all dimensions, meaning that 
for $1\leq r \leq \infty,$ $1\leq m <\infty,$ $0<\alpha <n,$ $0\leq \theta \leq 1$ and $\sigma <s$, if we assume  $\Lambda ^\alpha u \in L^m(\mathbb{R}^n) $ and $\Lambda ^s u \in L^r(\mathbb{R}^n)$ and 
If $$\frac{1}{p}-\frac{\sigma}{n}=
\theta\left(\frac{1}{m}-\frac{\alpha}{n} \right)
+(1-\theta)\left(\frac{1}{r}-\frac{s}{n}\right),
$$ then 
\begin{equation*}
\|\Lambda^\sigma u\|_{L^p(\mathbb{R}^n)} \leq C \|\Lambda^\alpha u\|_{L^m(\mathbb{R}^n)}^\theta \|\Lambda^{s}u\|_{L^{r}(\mathbb{R}^n)}^{1-\theta}.
\end{equation*}
\end{remark}


We also need the Moser-type product laws for fractional Laplacian operators. The first estimate \eqref{product0} is due to Kato-Ponce \cite{KP1} and Kenig-Ponce-Vega \cite{KPV1} and the second one \eqref{product} can be found in \cite[Corollary 2.55]{bahouri1}.
\begin{lemma}\label{lemmaproduct}
The following statement holds{\rm:}
\begin{itemize}
\item Let $s>0$ and $1<p<\infty$. If $u\in L^{p_1}\cap \dot{W}^{s,p_2}(\mathbb{R}^3)$ and $v\in L^{p_1}\cap \dot{W}^{s,p_3}(\mathbb{R}^3)$ with $1<p_1,p_2,p_3,p_3<\infty$ satisfying $\frac{1}{p}=\frac{1}{p_1}+\frac{1}{p_2}=\frac{1}{p_3}+\frac{1}{p_4}$, then we have $uv\in \dot{W}^{s,p}(\mathbb{R}^3)$ and
\begin{align}
\|\Lambda^{s}(uv)\|_{L^p}\leq C\|u\|_{L^{p_1}} \|\Lambda^{s}
v\|_{L^{p_2}}+C\|\Lambda^{s}u\|_{L^{p_3}}\|v\|_{L^{p_4}}.\label{product0}
\end{align}

\item  Let $s_{1},s_{2}$ satisfy $-\frac{3}{2}< s_{1},s_{2}<\frac{3}{2}$ and $s_{1}+s_{2}>0$. If $u\in \dot{H}^{s_1}(\mathbb{R}^3)$ and $v\in \dot{H}^{s_2}(\mathbb{R}^3)$, then we have $uv\in \dot{H}^{s_{1}+s_{2}-\frac{3}{2}}(\mathbb{R}^3)$ and
\begin{align}
\|\Lambda^{s_1+s_2-\frac{3}{2}}(uv)\|_{L^2}\leq C\|\Lambda^{s_1}u\|_{L^2}\|\Lambda^{s_2} v\|_{L^2}.\label{product}
\end{align}
\end{itemize}
\end{lemma}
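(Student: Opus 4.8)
The plan is to prove both estimates by a homogeneous Littlewood--Paley analysis combined with Bony's paraproduct decomposition, which is the framework underlying the references \cite{KP1,KPV1,bahouri1}. Fix a dyadic partition of unity and write $\dot\Delta_j$ and $\dot S_j=\sum_{k\le j-1}\dot\Delta_k$ for the homogeneous frequency projectors. For suitable distributions one has
\[
uv=T_uv+T_vu+R(u,v),\qquad T_uv=\sum_j \dot S_{j-1}u\,\dot\Delta_j v,\quad R(u,v)=\sum_{|j-k|\le1}\dot\Delta_j u\,\dot\Delta_k v,
\]
where each summand of $T_uv$ is spectrally supported in a dyadic annulus $\{|\xi|\sim 2^j\}$, while each summand of $R$ is supported in a ball $\{|\xi|\lesssim 2^j\}$. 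The two tools I will use repeatedly are Bernstein's inequalities (to turn the action of $\Lambda^s$ on a frequency-localized piece into a factor $2^{js}$ and to trade integrability for derivatives) and, for the $L^p$ statement, the Fefferman--Stein square-function characterization $\|\Lambda^s f\|_{L^p}\sim\|(\sum_j 2^{2js}|\dot\Delta_j f|^2)^{1/2}\|_{L^p}$, valid for $1<p<\infty$.

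For \eqref{product0} I would treat the three pieces separately. For the paraproduct $T_uv$, whose blocks live in annuli, the square-function characterization reduces the bound to controlling $\|(\sum_j 2^{2js}|\dot S_{j-1}u|^2|\dot\Delta_j v|^2)^{1/2}\|_{L^p}$; estimating $|\dot S_{j-1}u|$ pointwise by the Hardy--Littlewood maximal function $Mu$, pulling it out of the sum, and applying H\"older with $\tfrac1p=\tfrac1{p_1}+\tfrac1{p_2}$ together with the $L^{p_1}$-boundedness of $M$ (here $p_1>1$) and the square function for $v$ yields $\|\Lambda^s T_uv\|_{L^p}\lesssim\|u\|_{L^{p_1}}\|\Lambda^s v\|_{L^{p_2}}$. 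Symmetrically $T_vu$ produces $\|\Lambda^s u\|_{L^{p_3}}\|v\|_{L^{p_4}}$. The remainder $R(u,v)$ is the only place where $s>0$ is essential: since each block $\dot\Delta_j u\,\widetilde{\dot\Delta}_j v$ is ball-supported, $\Lambda^s$ costs at most $2^{js}$ by Bernstein, and this positive power is exactly what makes the dyadic sum $\sum_j 2^{js}\|\dot\Delta_j u\|_{L^{p_3}}\|\widetilde{\dot\Delta}_j v\|_{L^{p_4}}$ collapse into $\|\Lambda^s u\|_{L^{p_3}}\|v\|_{L^{p_4}}$ after a Cauchy--Schwarz/Young argument in the dyadic index.

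The estimate \eqref{product} is cleaner, since $L^2=\dot H^0$ lets one work entirely with the weighted sequence $(2^{j\sigma}\|\dot\Delta_j(uv)\|_{L^2})_j$, $\sigma=s_1+s_2-\tfrac32$, and recover $\|uv\|_{\dot H^\sigma}$ by Plancherel. For $T_uv$ I would use $\|\dot S_{k-1}u\|_{L^\infty}\lesssim 2^{k(3/2-s_1)}\|u\|_{\dot H^{s_1}}$ (Bernstein in dimension three), where $s_1<\tfrac32$ guarantees the geometric summation; substituting and using $j\sim k$ on each block collapses the weight to $2^{ks_2}$, so a single $\ell^1\!*\!\ell^2$ convolution gives $\|T_uv\|_{\dot H^\sigma}\lesssim\|u\|_{\dot H^{s_1}}\|v\|_{\dot H^{s_2}}$, with $T_vu$ symmetric under $s_2<\tfrac32$. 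For the remainder I would bound $\|\dot\Delta_j(\dot\Delta_k u\,\widetilde{\dot\Delta}_k v)\|_{L^2}\lesssim 2^{3j/2}\|\dot\Delta_k u\|_{L^2}\|\widetilde{\dot\Delta}_k v\|_{L^2}$ for $k\gtrsim j$ (Bernstein on the output together with $L^2\times L^2\hookrightarrow L^1$), after which the weight becomes $2^{(j-k)(s_1+s_2)}$ and the sum over $k\ge j-O(1)$ is an $\ell^1$ kernel precisely when $s_1+s_2>0$.

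I expect the main obstacle, in both parts, to be the remainder term $R(u,v)$ together with the dyadic summations, since it is there that the sign conditions---$s>0$ in \eqref{product0} and $s_1+s_2>0$ in \eqref{product}---are genuinely needed and cannot be relaxed, and where one must pair the correct $\ell^2$ and $\ell^\infty$ sequence norms so that the final convolution reproduces the product $\|u\|\,\|v\|$ rather than a divergent series. The remaining care concerns the low-frequency behaviour of the homogeneous blocks: the upper restrictions $s_1,s_2<\tfrac32$ are used through Bernstein to sum the paraproducts, while the symmetric range $-\tfrac32<s_1,s_2<\tfrac32$ keeps the spaces $\dot H^{s_i}$ and the target $\dot H^{s_1+s_2-3/2}$ within the class of genuine tempered distributions. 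Everything else reduces to routine bookkeeping with Bernstein's inequalities and the square-function/Plancherel identities.
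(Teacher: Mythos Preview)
Your outline is correct and follows the standard Littlewood--Paley/paraproduct route that underlies the references the paper invokes. Note, however, that the paper itself does not give a proof of this lemma: it states the two estimates and attributes \eqref{product0} to Kato--Ponce \cite{KP1} and Kenig--Ponce--Vega \cite{KPV1}, and \eqref{product} to \cite[Corollary~2.55]{bahouri1}, with no further argument. So there is nothing to compare your approach against beyond observing that the Bony decomposition argument you sketch is precisely the machinery behind those cited results; in particular, your identification of where the hypotheses enter---$s>0$ for the remainder in \eqref{product0}, $s_1,s_2<\tfrac32$ for the paraproducts and $s_1+s_2>0$ for the remainder in \eqref{product}---matches the proof in \cite{bahouri1}.
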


Finally, we recall Hardy's inequality for fractional norms (see \cite[Page 91]{bahouri1}).

\begin{lemma}\label{lemmahardy}
Let $0\leq s<\frac{3}{2}$. For any $v\in \dot{H}^{s}(\mathbb{R}^{3})$, it holds that 
\begin{equation}\label{hardy}
\begin{aligned}
&\Big(\int_{\mathbb{R}^{3}}\frac{|v(x)|^2}{|x|^{2s}}\,dx\Big)^{\frac{1}{2}}\leq C\|\Lambda^{s}v\|_{L^2}.
\end{aligned}
\end{equation}
\end{lemma}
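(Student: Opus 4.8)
The plan is to dispose of the trivial case $s=0$ (where \eqref{hardy} is just Plancherel) and, for $0<s<\frac32$, to run a dyadic argument simultaneously in frequency and in physical space that reduces the weighted bound to a one-dimensional discrete convolution inequality; this is the route underlying the cited reference. First I would record the Littlewood–Paley characterization $\|v\|_{\dot H^s}^2\sim\sum_{j\in\mathbb Z}2^{2js}\|\dot\Delta_j v\|_{L^2}^2$, where $\dot\Delta_j$ is the homogeneous projection onto frequencies $|\xi|\sim 2^j$, and decompose the space into dyadic shells $D_k=\{2^k\le|x|<2^{k+1}\}$, so that $\int_{\mathbb R^3}|x|^{-2s}|v|^2\,dx\sim\sum_{k\in\mathbb Z}2^{-2ks}\int_{D_k}|v|^2\,dx$.

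Next I would estimate each shell contribution by inserting $v=\sum_j\dot\Delta_j v$, applying the triangle inequality in $L^2(D_k)$, and bounding a single block $\dot\Delta_j v$ on $D_k$ in two complementary regimes. When the wavelength is at most the shell radius ($j+k\gtrsim0$) I use the crude bound $\|\dot\Delta_j v\|_{L^2(D_k)}\le\|\dot\Delta_j v\|_{L^2(\mathbb R^3)}$; when the wavelength exceeds the radius ($j+k\lesssim0$) the block is essentially constant across $D_k$, so combining Bernstein's inequality $\|\dot\Delta_j v\|_{L^\infty}\lesssim 2^{3j/2}\|\dot\Delta_j v\|_{L^2}$ with $|D_k|\sim 2^{3k}$ gives $\|\dot\Delta_j v\|_{L^2(D_k)}\lesssim 2^{3(j+k)/2}\|\dot\Delta_j v\|_{L^2}$. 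Writing $a_j=2^{js}\|\dot\Delta_j v\|_{L^2}$ (so $\|a\|_{\ell^2}\sim\|v\|_{\dot H^s}$), these two bounds collapse into $2^{-ks}\|v\|_{L^2(D_k)}\le\sum_j K(j+k)\,a_j$ with the single-variable kernel $K(m)=2^{-ms}\min\{1,2^{3m/2}\}$.

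Finally I would sum in $k$: by Young's inequality for discrete convolutions (equivalently Schur's test), $\big\|2^{-ks}\|v\|_{L^2(D_k)}\big\|_{\ell^2_k}\le\|K\|_{\ell^1}\|a\|_{\ell^2}$, which yields \eqref{hardy} as soon as $\|K\|_{\ell^1}<\infty$. The decisive point, and the only place the hypothesis enters, is the convergence of $\|K\|_{\ell^1}=\sum_{m\ge0}2^{-ms}+\sum_{m<0}2^{m(3/2-s)}$: the first sum requires $s>0$ and the second requires $s<\frac32$, which is exactly the stated range $0<s<\frac32$. This mirrors the fact that the singularity of the weight $|x|^{-2s}$ at the origin is only square-integrable locally when $2s<3$, and the low-wavelength/large-shell interaction is where the constraint bites; making the shell-by-shell orthogonality bookkeeping clean is the one technical obstacle, but it is entirely absorbed into the $\ell^1$ kernel estimate. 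As an alternative one can write $v=\Lambda^{-s}(\Lambda^s v)$ as a Riesz potential and invoke the Stein–Weiss inequality with $p=q=2$, smoothing order $\alpha=s$, and output weight exponent $s$, obtaining the conclusion in one step; there the constraint reappears as the admissibility condition $s<3/q=\frac32$.
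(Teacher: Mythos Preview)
Your argument is correct. The paper does not actually prove this lemma: it simply quotes it as a known fact and cites \cite[Page 91]{bahouri1} (Bahouri--Chemin--Danchin). Your dyadic-in-space / Littlewood--Paley-in-frequency approach is precisely the strategy used in that reference, so there is nothing to compare; you have effectively reconstructed the proof the paper defers to.
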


\section{Regularity criteria}\label{section3}

In this section, we aim at investigating the mechanism of possible finite time blow-up for the generalized chemotaxis-Navier-Stokes model \eqref{eq1-1}, which enables us to study the global existence if the corresponding norms of these criteria can be controlled.

\subsection{Proof of Theorem \ref{Th1.1}}\label{subsection31}


We will give the proof of Theorem \ref{Th1.1}.  We prove the theorem by contradictory arguments. Assume that the maximal time $T_*$ is finite, and \eqref{eq1-3} is not true, i.e., 
\begin{equation}
B_{1}:=\lim_{T\rightarrow T_*}\int_0^{T}\left(\|n\|_{L^{q_1}}^{p_1}+\|u\|_{L^{q_2}}^{p_2} \right)\,dt <+\infty\label{blow1}
\end{equation}
for any $(p_i, q_i) (i=1, 2)$ satisfying  \eqref{piqi}. Then, once the $H^2$-norm of the solution $(n, c, u)$ to the Cauchy problem \eqref{eq1-1}-\eqref{eq1-2} is bounded when $t$ is close to $T_*$, one can show that the assumption $T_*<+\infty$ will lead to a contradiction with the maximality of $T_*$ in accordance with the local well-posedness result (Theorem~\ref{Thlocal}). 

In the following, we establish the key estimates of solutions. The proof is divided into three steps.




\begin{itemize}
\item \textbf{Step 1: $L^2$-estimates of $n$ and $H^1$-estimates of $c,u$.}
\end{itemize}

In this step, our goal is to get
\begin{equation}\label{eq3-0}
\sup_{t\in(0,T]}(\|n\|_{L^2}^2+\|(c,u)\|_{H^1}^2)+\int_{0}^{T}(\|\nabla n\|_{L^2}^2+\|\nabla c\|_{H^1}^2+\|\Lambda^{\alpha}u\|_{H^1}^2)\,dt\leq C(B_1,T_*)
\end{equation}
for any $0<T<T_*$ and some constant $C(B_1,T_*)>0$.

To begin with, we give some uniform estimates. It follows from  the classical maximum principle that
\begin{equation}\label{cinfty}
\begin{aligned}
&n(t,x)\geq 0,\quad\quad 0\leq c(t,x)\leq \|c_0\|_{L^{\infty}},\quad (t,x)\in [0,T]\times \mathbb{R}^3.
\end{aligned}
\end{equation}
In addition, by taking the inner product of $\eqref{eq1-1}_2$ by $c$ and using $f(c)\geq0$, we arrive at
\begin{equation}\label{ccL2}
\begin{aligned}
\sup_{t\in(0,T]}\|c\|_{L^2}^2+\int_{0}^{T}\|\nabla c\|_{L^2}^2\,dt\leq \|c_0\|_{L^2}^2.
\end{aligned}
\end{equation}

In the following, we divide the proof of \eqref{eq3-0} into the cases $\alpha\geq 1$ and $\frac{3}{4}<\alpha<1$ several times.

 \begin{itemize}

\item {\emph{ Case}} 1:   $\alpha\geq 1$
\end{itemize}

Now, we perform the $L^2$-estimate of $n$ and the $H^1$-estimates of $c,u$. First, we multiply $\eqref{eq1-1}_1$ by $n$, integrate the resulting equation by parts, and then derive 
\begin{equation}\nonumber
\begin{aligned}
	\frac{1}{2}\frac{d}{dt}\|n\|_{L^2 }^2+\|\nabla n\|_{L^2 }^2 
	&=
	\int_{\mathbb{R}^3}\chi(c) n\nabla c\cdot \nabla n\,dx\\
	&\leq   \sup_{0\leq c\leq \|c_0\|_{L^\infty }} |\chi (c)| \|\nabla n\|_{L^2 } \|n\|_{L^{q_1} }\|\nabla c\|_{L^\frac{2q_1}{q_1-2}}\\
 &\leq C \|\nabla n\|_{L^2 } \|n\|_{L^{q_1} }  \|\nabla c\|_{L^{2} }^\frac{q_1-3}{q_1} \|\nabla^2c\|_{L^2 }^\frac{3}{q_1}\\
		& \leq  \frac{1}{2} \|\nabla n\|_{L^2 }^2+\frac{1}{4}\|\Delta c\|_{L^2}^2+C\|n\|_{L^{q_1} }^{\frac{2q_1}{q_1-3}}\|\nabla c\|_{L^2 }^2,
\end{aligned}
	\end{equation}
	where we have used \eqref{cinfty}, $3<q_1<\infty$, the Gagliardo-Nirenberg inequality \eqref{G-GN} and  the fact that
\begin{align}
\|\Delta g\|_{L^2}\leq \|\nabla^2g\|_{L^2}\leq C\|\Delta g\|_{L^2},\quad \forall g\in\dot{H}^2.\label{Deltasim}
\end{align}
Due to $\frac{2q_1}{q_1-3}\leq p_1$, it thus follows that 
	\begin{equation}\label{eq3-2}
	\frac{d}{dt}\|n\| _{L^2 }^2+\|\nabla n\|_{L^2 }^2	\leq \frac{1}{2}\|\Delta c\|_{L^2}^2+C(1+\|n\|_{L^{q_1}}^{p_1})\|\nabla c\| _{L^2 }^2.
\end{equation}

Furthermore, we multiply both sides of  $\eqref{eq1-1}_2$ by $-\Delta c$ and integrate the resulting equation to obtain  
\begin{equation}\label{nablac}
	\frac{1}{2} \frac{d}{dt}\|\nabla c\| _{L^2 }^2+\|\Delta c\| _{L^2 }^2
	=\int_{\mathbb{R}^3} (u\cdot\nabla c)\cdot\Delta c\, dx+\int_{\mathbb{R}^3} n \Delta c  f(c)\,dx.
\end{equation}
Due to $2\alpha-1\geq 1$, we have $\frac{2q_2}{q_2-3}\leq p_2$, so it follows from \eqref{G-GN} and \eqref{Deltasim} that
\begin{equation}\nonumber
\begin{aligned}
\left|\int_{\mathbb{R}^3} (u\cdot\nabla c)\cdot\Delta c\, dx\right|&\leq \|u\|_{L^{q_2}}\|\nabla c\|_{L^\frac{2q_2}{q_2-2}} \|\Delta c\|_{L^2} \\
&\leq C \|u\|_{L^{q_2}} \|\nabla c\|_{L^2}^{\frac{q_2-3}{q_2}}\|\nabla^2c\|_{L^2}^{\frac{3}{q_2}}\|\Delta c\|_{L^2}\\
&\leq \frac{1}{4}\|\Delta c\|_{L^2}^2+C\|u\|_{L^{q_2}}^{\frac{2q_2}{q_2-3}} \|\nabla c\|_{L^2}^2\\
&\leq \frac{1}{4}\|\Delta c\|_{L^2}^2+C(1+\|u\|_{L^{q_2}}^{p_2}) \|\nabla c\|_{L^2}^2.
\end{aligned}
\end{equation}
And by \eqref{cinfty}, it is easy to verify that
\begin{eqnarray*}
\begin{aligned}
\left|\int_{\mathbb{R}^3} n \Delta c  f(c)\,dx\right|&\leq \frac{1}{4}\|\Delta c\|_{L^2}^2+\sup_{0\leq s\leq \|c_0\|_{L^\infty}}|f(s)|^2
	\|n\| _{L^2 }^2.
\end{aligned}
\end{eqnarray*}
Hence, we have
\begin{equation}\label{eq3-22222}
\begin{aligned}
& \frac{d}{dt}\|\nabla c\| _{L^2 }^2+\|\Delta c\| _{L^2 }^2\leq C(1+\|u\|_{L^{q_2}}^{p_2}) \|\nabla c\|_{L^2}^2+C\|n\| _{L^2 }^2.
\end{aligned}
\end{equation}

We are going to analyze the velocity $u$. For the third equation in \eqref{eq1-1}, one has
\begin{equation}\label{eq3-4}
\begin{aligned}
\frac{1}{2}\frac{d}{dt}\|u\| _{L^2 }^2+\|\Lambda^{\alpha} u\| _{L^2 }^2
=-\int_{\mathbb{R}^3} u\cdot n\nabla \phi 
\leq \|u\| _{L^2 }^2+C \|\nabla \phi\|_{L^\infty}^2\|n\| _{L^2 }^2.
\end{aligned}
	\end{equation}
To deal with the $\dot{H}^1$-estimates of $u$, we multiply both sides of the third equation of \eqref{eq1-1} by $-\Delta u$ and integrate by parts, and then have
\begin{equation}\label{eq3-6}
	\begin{aligned}
	\frac{1}{2} \frac{d}{dt} \|\nabla u\| _{L^2 }^2	+\|\Lambda ^{\alpha+1} u\| _{L^2 }^2
	=  \int_{\mathbb{R}^3} \Delta u \cdot n \nabla \phi\,dx+\int_{\mathbb{R}^3} \Delta u \cdot (u \cdot \nabla) u\,dx.
	\end{aligned}
\end{equation}
For the terms on the right-hand side of \eqref{eq3-6}, we first deduce from  the Gagliardo-Nirenberg inequality \eqref{G-GN} and $\frac{2q_1}{q_1-3}\leq p_1$ that
\begin{equation}\label{eq3-7}
	\begin{aligned}
\left|\int_{\mathbb{R}^3} \Delta u \cdot n \nabla \phi\,dx \right|&\leq  \|\nabla \phi\|_{L^\infty}\|\Delta u\| _{L^2 }\|n\| _{L^2 }\\
&\leq  C \|\nabla u\| _{L^2 }^{1-\frac{1}{\alpha}}
\|\Lambda^{\alpha+1} u\| _{L^2 }^\frac{1}{\alpha}\|n\| _{L^2 }\\
&\leq  \frac{1}{4} \|\Lambda^{\alpha+1} u\| _{L^2 }^2
+C\|\nabla u\| _{L^2 }^2+C\|n\| _{L^2 }^2.
	\end{aligned}
\end{equation}
Similarly, it holds by \eqref{G-GN} that
\begin{equation}\label{eq3-8}
\begin{aligned}
\left|\int_{\mathbb{R}^3} \Delta u \cdot (u \cdot \nabla) u	\,dx\right|&=\left| \sum_{i,j,k=1}^{3}\int_{\mathbb{R}^3}  u_k(\partial_{ik}u_j\cdot\partial_i u_j  
+\partial_k u_j \cdot \partial_{ii} u_j)\,dx
\right|\\
&\leq C\|u\|_{L^{q_2}}\|\nabla u\|_{L^{r_1}}\|\Delta u\|_{L^{r_2}}\\
&\leq   C\|u\|_{L^{q_2}}  \|\nabla u\| _{L^2 }^\theta  \|\Lambda^{\alpha +1}u\| _{L^2 }^{1-\theta}
\|\nabla u\| _{L^2 }^\theta  \|\Lambda^{\alpha +1}u\| _{L^2 }^{1-\theta}\\
& \leq  \frac{1}{4} \|\Lambda^{\alpha +1}u\| _{L^2 }^2+
C\|u\|_{L^{q_2}}^\frac{1}{\theta}\|\nabla u\| _{L^2 }^2,
\end{aligned}	
\end{equation}
where we used  $\alpha\geq1$, and the constants $r_1, \, r_2$ and $\theta$ satisfy 
\begin{eqnarray*}
\begin{cases}
\frac{1}{r_1}+\frac{1}{r_2}+\frac{1}{q_2}=1,&\\
\frac{1}{r_1}-\frac{1}{3}=\theta\left(\frac{1}{2}-\frac{1}{3}\right)+
(1-\theta)\left(\frac{1}{2}-\frac{\alpha+1}{3}\right)&\\
\frac{1}{r_2}-\frac{2}{3}=\theta\left(\frac{1}{2}-\frac{1}{3}\right)+
(1-\theta)\left(\frac{1}{2}-\frac{\alpha+1}{3}\right).&	
\end{cases}	
\end{eqnarray*}
which can be rewritten  as 
$$
r_1=\frac{6q_2}{2q_2-3},\,\,\quad  r_2=\frac{6q_2}{4q_2-3},\,\, \quad \frac{1}{p_2}\leq \theta=1-\frac{1}{2\alpha}\left(1+\frac{3}{q_2}\right). 
$$
In order to promise $1<r_1, \, r_2<\infty$ and $0< \theta < 1$, one requires $\min\{\frac{3}{2},\frac{3}{2\alpha-1}\}<q_2<\infty$.
Substituting  \eqref{eq3-7} and \eqref{eq3-8} into \eqref{eq3-6}, we conclude that 
\begin{equation}\label{nablaucase1}
	\begin{aligned}
	\frac{d}{dt}\|\nabla u\| _{L^2 }^2	+\|\Lambda ^{\alpha+1} u\| _{L^2 }^2
\leq  C \|n\|_{L^2}^2+\left(1+\|u\|_{L^{q_2}}^{p_2}\right)\|\nabla u\| _{L^2 }^2.
		\end{aligned}
\end{equation}
The combination of \eqref{eq3-2}, \eqref{eq3-22222},  \eqref{eq3-4} and \eqref{nablaucase1} yields
\begin{equation}\nonumber
	\begin{aligned}
	&\frac{d}{dt}(\|(n,\nabla c)\|_{L^2}^2+\| u\| _{H^1 }^2)	+\frac{1}{2}\|(\nabla n,\Delta c)\|_{L^2}^2+ \frac{1}{2}\|\Lambda ^{\alpha} u\| _{H^1}^2\\
&\leq C(1+\|n\|_{L^{q_1}}^{p_1}+\|u\|_{L^{q_2}}^{p_2})(\|(n,\nabla c)\|_{L^2}^2+\| u\| _{H^1 }^2),
		\end{aligned}
\end{equation}
which, together with \eqref{blow1}, \eqref{Deltasim} and Gr\"onwall's lemma, yields
\begin{equation}\nonumber
	\begin{aligned}
 &\sup_{t\in(0,T]}(\|(n,\nabla c)\|_{L^2}^2+\| u\| _{H^1 }^2)+\int_{0}^{T}(\|(\nabla n,\nabla^2 c)\|_{L^2}^2+ \|\Lambda ^{\alpha} u\| _{H^1}^2)\,dt\\
 &\quad \leq Ce^{CB_1+C T}(\|(n_0,\nabla c_0)\|_{L^2}^2+\| u_0\|_{H^1}^2)
		\end{aligned}
\end{equation}
for any $0<T<T_*$. This together with \eqref{ccL2} yields \eqref{eq3-0} in the case $\alpha\geq 1$.

\begin{itemize}

\item {\emph{ Case}} 2:   $\frac{3}{4}< \alpha< 1$
\end{itemize}

In this case, we are able to obtain the inequality \eqref{eq3-2} for $n$ as in Case 1. Regarding the estimates of $c$, the second term on the right-hand side of \eqref{nablac} can be handled in the same way in Case 1. To deal with the remainder on the right-hand side of \eqref{nablac}, one has
\begin{equation}\nonumber
\begin{aligned}
\int_{\mathbb{R}^3} (u\cdot\nabla c)\cdot\Delta c\, dx&=-\int_{\mathbb{R}^3} (\nabla c \cdot \nabla u)\cdot \nabla c\,dx-\int_{\mathbb{R}^3}(u\cdot \nabla^2 c)\cdot \nabla c\,dx \\
&=\int_{\mathbb{R}^3} \nabla^2 c: \nabla u c\,dx,
\end{aligned}
\end{equation}
derived from $\nabla\cdot u=0$ and integration by parts, so it holds by \eqref{G-GN}, \eqref{cinfty} and \eqref{Deltasim} that
\begin{equation}\nonumber
\begin{aligned}
\left|\int_{\mathbb{R}^3} (u\cdot\nabla c)\cdot\Delta c\, dx\right|&\leq \|c\|_{L^{\infty}} \|\nabla u\|_{L^2}\|\nabla^2c\|_{L^2}\\
&\leq C\|c_0\|_{L^{\infty}} \|u\|_{L^2}^{\frac{\alpha}{\alpha+1}}\|\nabla\Lambda^\alpha u\|_{L^2}^{\frac{1}{\alpha+1}} \|\Delta c\|_{L^2}\\
&\leq \frac{1}{4} \|\Delta c\|_{L^2}^2+\frac{1}{4}\|\nabla \Lambda^{\alpha}u\|_{L^2}^2+C \|u\|_{L^2}^2.
\end{aligned}
\end{equation}
Therefore, we gain
\begin{equation}\label{eq3-222220}
\begin{aligned}
& \frac{d}{dt}\|\nabla c\| _{L^2 }^2+\|\Delta c\| _{L^2 }^2\leq \frac{1}{2}\|\nabla \Lambda^{\alpha}u\|_{L^2}^2+C\|(n,u)\| _{L^2 }^2.
\end{aligned}
\end{equation}

 Furthermore, we are going to establish the $\dot{H}^1$-estimates of $u$. In fact, multiplying both sides of the third equation of \eqref{eq1-1} by $-\Delta u$ and integrating by parts, one has 
\begin{equation}\label{eq22-5}
	\begin{aligned}
	\frac{1}{2} \frac{d}{dt} \|\nabla u\| _{L^2 }^2	+\|\nabla \Lambda ^{\alpha} u\| _{L^2 }^2
	=  \int_{\mathbb{R}^3} \Delta u \cdot n \nabla \phi\,dx+\int_{\mathbb{R}^3} \Delta u \cdot (u \cdot \nabla) u \,dx.
	\end{aligned}
\end{equation}
The first term on the right-hand side of \eqref{eq22-5}  is analyzed by
\begin{equation}\label{case2uphi}
	\begin{aligned}
\left|\int_{\mathbb{R}^3} \Delta u \cdot n \nabla \phi\,dx \right|&=\left|\int_{\mathbb{R}^3} \nabla u \cdot \nabla (n \nabla \phi)\,dx \right|\\
&\leq  \|\nabla u\|_{L^2}\|n\| _{L^2 }\|\nabla^2 \phi\|_{L^\infty}+\|\nabla u\|_{L^2}\|\nabla n\| _{L^2 }\|\nabla \phi\|_{L^\infty}\\
&\leq  C\|\nabla u\| _{L^2 }^2+C\|n\| _{L^2 }^2+\frac{1}{4}\|\nabla  n\| _{L^2 }^2.
	\end{aligned}
\end{equation}
As $\Lambda^{\sigma}$ is self-adjoint for any $\sigma>0$, one deduces from  $\nabla\cdot u=0$ and \eqref{G-GN} that
\begin{equation}\nonumber
\begin{aligned}
\Big| \int_{\mathbb{R}^3} \Delta u \cdot (u \cdot \nabla) u \,dx\Big|&=\Big| \int_{\mathbb{R}^3} \Lambda^{\alpha-1}\Delta u \cdot \Lambda^{1-\alpha} \div (u \otimes u) \,dx\Big|\\
&\leq C\|\nabla \Lambda^{\alpha}u\|_{L^2} \|\Lambda^{2-\alpha}(u\otimes u)\|_{L^2}.
\end{aligned}	
\end{equation}
Note that the product law \eqref{product0} and the Gagliardo-Nirenberg inequality \eqref{G-GN} implies that
\begin{equation}\nonumber
\begin{aligned}
\|\Lambda^{2-\alpha}(u\otimes u)\|_{L^2}\leq C\|u\|_{L^{q_2}} \|\Lambda^{2-\alpha}u\|_{L^{\frac{2q_2}{q_2-2}}}\leq C\|u\|_{L^{q_2}} \|\nabla u\|_{L^2}^{1-\theta} \|\nabla\Lambda^{\alpha}u\|_{L^2}^{\theta},
\end{aligned}	
\end{equation}
where $\theta=\frac{1}{\alpha}(1+\frac{3}{q_2})-1$ fulfills $$
\frac{q_2-2}{2q_2}-\frac{2-\alpha}{3}=\left(\frac{1}{2}-\frac{1}{3}\right)(1-\theta)+\left(\frac{1}{2}-\frac{1+\alpha}{3}\right)\theta.
$$
Since $\frac{2}{1-\theta}\leq p_2$ due to \eqref{piqi}, we obtain
\begin{equation}\nonumber
\begin{aligned}
\Big| \int_{\mathbb{R}^3} \Delta u \cdot (u \cdot \nabla) u \,dx\Big| &\leq \frac{1}{2}\|\nabla\Lambda^{\alpha}u\|_{L^2}^2+C\|u\|_{L^{q_2}}^{\frac{2}{1-\theta}}\|\nabla u\|_{L^2}^2\\
&\leq  \frac{1}{2}\|\nabla\Lambda^{\alpha}u\|_{L^2}^2+C(1+\|u\|_{L^{q_2}}^{p_2})\|\nabla u\|_{L^2}^2.
\end{aligned}	
\end{equation}
It thus follows that
\begin{equation}\label{ucase2}
\begin{aligned}
\frac{d}{dt} \|\nabla u\| _{L^2 }^2	+\|\nabla \Lambda ^{\alpha} u\| _{L^2 }^2&\leq C(1+\|u\|_{L^{q_2}}^{p_2})\|\nabla u\|_{L^2}^2+C\|n\|_{L^2}^2+\frac{1}{2}\|\nabla  n\| _{L^2 }^2.
\end{aligned}
\end{equation}
Collecting the estimates  \eqref{eq3-2}, \eqref{eq3-7} and \eqref{eq3-222220} and \eqref{ucase2} and thence applying \eqref{blow1} and Gr\"onwall's lemma, we arrive at
\begin{equation*}
	\begin{aligned}
 &\sup_{t\in(0,T]}(\|(n,\nabla c)\|_{L^2}^2+\| u\| _{H^1 }^2)+\int_{0}^{T}(\|(\nabla n,\nabla^2 c)\|_{L^2}^2+ \|\Lambda ^{\alpha} u\| _{H^1}^2)\,dt\\
 &\quad\leq e^{CT+CB_1}(\|(n_0,\nabla c_0)\|_{L^2}^2+\|u_0\|_{H^1}^2).
	\end{aligned}
\end{equation*}
Together with \eqref{ccL2}, we thus establish \eqref{eq3-0} for all $\alpha>\frac{3}{4}$.

\vspace{2mm}

\begin{itemize}
\item \textbf{Step 2: $H^1$-estimates of $n$ and $H^2$-estimates of $c,u$.}
\end{itemize}

In this step, we claim that for any $0<T<T_*$, there exists some constant $C(B_1,T_*)$ such that
\begin{equation}\label{eq3-10-1}
\sup_{t\in(0,T]}\|(\nabla n,\nabla^2 c,\nabla^2 u)\|_{L^2}^2+\int_{0}^{T}\|(\nabla^2 n,\nabla^3 c,\Lambda^{\alpha+2}u)\|_{L^2}^2\,dt\leq C(B_1,T_*).
	\end{equation}

Despite the $H^1$-estimate of $c$ obtained in Step 1,  it is not enough to 
control the chemotaxis term $\nabla \cdot (\chi(c) n \nabla c)$ in $\eqref{eq1-1}_1$. To this end, we first establish the  $H^2$-estimate of $c$. Applying $\Delta$ to $ \eqref{eq1-1}_2$ yields
\begin{equation}
\partial_t \Delta c-\Delta \Delta c=-\Delta(u \cdot \nabla c)-\Delta(nf(c)).\label{Deltac}
\end{equation}
Multiplying \eqref{Deltac} by $\Delta c$, we obtain 
\begin{equation}\label{eq3-11}
\begin{aligned}
\frac{1}{2}\frac{d}{dt}\|\Delta c\| _{L^2 }^2	+\|\nabla\Delta c\| _{L^2 }^2
=\int_{\mathbb{R}^3} \nabla \Delta c \cdot \nabla (u \cdot \nabla c)\,dx
+\int_{\mathbb{R}^3}\nabla \Delta c \cdot \nabla(n f(c))\,dx.
\end{aligned}
\end{equation}
In view of \eqref{G-GN} and Sobolev's inequality, the terms on the right hand side of \eqref{eq3-11} can be analyzed as 
\begin{equation}\label{eq3-12}
\begin{aligned}
&\left|\int_{\mathbb{R}^3} \nabla \Delta c \cdot \nabla (u \cdot \nabla c)\,dx \right|\\
& \quad\leq  \|\nabla\Delta c\|_{L^2 }\left(
\|\nabla u\|_{L^3} \|\nabla c\|_{L^6}+
\|u\|_{L^\infty}\|\nabla^2 c\| _{L^2 }
\right)\\
&\quad \leq C \|\nabla \Delta c\| _{L^2 }
\left(
\|\Lambda^{\alpha +1} u\| _{L^2 }^\frac{3}{2(1+\alpha)}
\|u\| _{L^2 }^{1-\frac{3}{2(1+\alpha)}}
\|\nabla^2 c\| _{L^2 }+\|u\|_{H^{\alpha+1}}\|\Delta c\| _{L^2 }\right)\\
&\quad\leq \frac{1}{4}\|\nabla\Delta c\| _{L^2 }^2+C
\|u\|_{H^{\alpha+1}}^2	\|\Delta c\| _{L^2 }^2,
\end{aligned}
\end{equation}
and
\begin{equation}\label{eq3-13}
\begin{aligned}
&\left|\int_{\mathbb{R}^3}\nabla \Delta c \cdot \nabla(n f(c))\,dx\right|\\
&\quad\leq \|\nabla\Delta c\|_{L^2}\Big(\sup_{0\leq s\leq \|c_0\|_{L^\infty}}|f(s)| \|\nabla n\|_{L^2}+\sup_{0\leq s\leq \|c_0\|_{L^\infty}}|f'(s)| \|n\|_{L^3} \|\nabla c\|_{L^6}\Big)\\
&\quad\leq C\|\nabla\Delta c\|_{L^2}(\|\nabla n\|_{L^2}+\|n\|_{L^2}^{\frac{1}{2}}\|\nabla n\|_{L^2}^{\frac{1}{2}} \|\Delta c\|_{L^2})\\
&\quad\leq   \frac{1}{4}\|\nabla\Delta c\| _{L^2 }^2+C \|\nabla n \| _{L^2 }^2+C(\|n\|_{L^2}^2+\|\nabla n\|_{L^2}^2)\|\Delta c\|_{L^2}^2.
\end{aligned}
\end{equation}
Substituting \eqref{eq3-12} and \eqref{eq3-13} into \eqref{eq3-11}, one has 
\begin{equation*}
\begin{aligned}
\frac{d}{dt}\|\Delta c \| _{L^2 }^2+\|\nabla \Delta c \| _{L^2 }^2
\leq  C(\|n\|_{L^2}^2+\|\nabla n\|_{L^2}^2+\|u\|_{H^{\alpha+1}}^2)\|\Delta c\|_{L^2}^2+C\|\nabla n\| _{L^2 }^2,
\end{aligned}
\end{equation*}
which, together with \eqref{eq3-0} and Gr\"{o}nwall's inequality, implies that, for any $t\in(0,T)$ with $0<T<T_*$, 
\begin{equation}\label{eq3-14}
\begin{aligned}
&\sup_{t\in(0,T]} \|\nabla^2 c \| _{L^2 }^2+
\int_0^{T}\|\nabla^3 c \| _{L^2 }^2 \,dt\\
&~~\leq C {\rm exp} \left\{\int_0^{T}(\|\nabla n\|_{L^2}^2+\|u\|_{H^{\alpha+1}}^2)\,dt+T\sup_{t\in(0,T)}\|n\|_{L^2}^2\right\}\\
&~~\quad\quad\quad\cdot\left(
\|\nabla^2 c_0\| _{L^2 }^2+\int_0^{T} \|\nabla n\| _{L^2 }^2 \,dt
\right)\leq  C(B_1,T).
\end{aligned}
	\end{equation}

Next, we are ready to bound $\nabla n$. Taking the $L^2$-inner product of $\eqref{eq1-1}_1$ with $\Delta n$ yields
\begin{equation}\label{sssssss}
\begin{aligned}
&\frac{1}{2}\frac{d}{dt}\|\nabla n\|_{L^2}^2+\|\Delta n\|_{L^2}^2\leq \frac{1}{2}\|\Delta n\|_{L^2}^2+\|u\cdot\nabla n\|_{L^2}^2+\|\nabla\cdot(\chi(c)n\nabla c)\|_{L^2}^2.
\end{aligned}
\end{equation}
Since $\alpha >\frac{1}{2}$, we can use \eqref{G-GN} and the Young inequality to obtain 
\begin{equation}\nonumber
\begin{aligned}
\|u\cdot\nabla n\|_{L^2}^2\leq \|u\|_{L^{\infty}}^2 \|\nabla n\|_{L^2}^2\leq C(\|u\|_{L^2}^2+\|\Lambda^{1+\alpha}u\|_{L^2}^2)\|\nabla n\|_{L^2}^2,
\end{aligned}
\end{equation}
and
the last term in \eqref{sssssss} can be controlled by
\begin{equation}\nonumber
\begin{aligned}
\|\nabla\cdot(\chi(c)n\nabla c)\|_{L^2}^2&\leq C\sup_{0\leq s\leq \|c_0\|_{L^{\infty}}}|\chi'(s)| \|\nabla c\|_{L^6}^4\|n\|_{L^6}^2\\
&\quad+C\sup_{0\leq s\leq \|c_0\|_{L^{\infty}}}|\chi(s)|(\|\nabla n\|_{L^2}^2\|\nabla c\|_{L^\infty}^2+\|n\|_{L^{6}}^2\|\nabla^2 c\|_{L^3}^2)\\
&\leq C(\|\nabla^2 c\|_{L^2}^4+\|\nabla^2c\|_{H^1}^2)\|\nabla n\|_{L^2}^2.
\end{aligned}
\end{equation}
Hence, using Gr\"onwall's lemma and the estimates \eqref{eq3-0} and \eqref{eq3-14} at hand, for any $0<T<T_*$, we obtain 
\begin{equation}\label{eq3-17}
\begin{aligned}
&\sup_{t\in(0,T]}\|\nabla n\| _{L^2 }^2+
\int_0^{T}\|\nabla^2 n\| _{L^2 }^2 \,dt\\
&\leq C\|\nabla n_0\| _{L^2 }^2 
   \cdot {\rm exp} \left\{
\int_0^{T}
\left(\|\nabla^2 c\| _{L^2 }^4
+\|\nabla c\|_{H^2}^2+\|u\|_{L^2}^2+\|\Lambda^{1+\alpha}u\|_{L^2}^2\right)\,dt
\right\}\leq C(B_1,T).
\end{aligned}
\end{equation}

Regarding the estimates of $u$, applying $\Delta$ to both side of $\eqref{eq1-1}_3$ and then taking the inner product of the resulting equation by $\Delta u,$ we have 
\begin{equation}\label{eq3-18}
\begin{aligned}
\frac{1}{2}\frac{d}{dt}&\|\Delta u\| _{L^2 }^2
+\|\Lambda^{\alpha }\Delta u\| _{L^2 }^2\\
&= \int_{\mathbb{R}^3}\nabla \Delta u \cdot \nabla ((u \cdot \nabla u)u)\,dx+
\int_{\mathbb{R}^3}\nabla \Delta u \cdot
\nabla (n\nabla \phi)\,dx.
\end{aligned}
\end{equation}
To proceed with the estimate, we deal with the two terms at the right hand side of \eqref{eq3-18} respectively. 
For any $\alpha>\frac{3}{4}$, we note that
\begin{equation}\nonumber
\begin{aligned}
 \int_{\mathbb{R}^3}\nabla \Delta u \cdot \nabla ((u \cdot \nabla) u)\,dx&=\sum_{i,j,k,m=1}^3\int_{\mathbb{R}^3}\partial_{iij}u^m \partial_{j}(u^k\partial_{k}u^m)\,dx\\
 &=-\sum_{i,j,k,m=1}^3\int_{\mathbb{R}^3}\partial_{ii}u^m \partial_{jj}u^k \partial_{k}u^m\,dx-2\sum_{i,j,k,m=1}^3 \int_{\mathbb{R}^3}\partial_{ii}u^m \partial_{j}u^k\partial_{jk} u^m\,dx,
	\end{aligned}
	\end{equation}
so it follows from the Gagliardo-Nirenberg inequality \eqref{G-GN} that
\begin{equation}\label{eq3-20}
\begin{aligned}
	 \left|\int_{\mathbb{R}^3}\nabla \Delta u \cdot \nabla ((u \cdot \nabla) u)\,dx\right|
	&\leq  3 \|\nabla u\| _{L^2 } \|\nabla^2 u\|_{L^4}^2\\
	&\leq  C\|\nabla u\| _{L^2 }
	\|\nabla^2 u\| _{L^2 }^{2-\frac{3}{2\alpha}}
	\|\Lambda^{2+\alpha} u\| _{L^2 }^{\frac{3}{2\alpha}}\\
	&\leq  \frac{1}{4} \|\Lambda^{2+\alpha} u\| _{L^2 }^2
	+ C \|\nabla u\| _{L^2 }^{\frac{4\alpha}{4\alpha-3}}\|\nabla^2 u\| _{L^2 }^2 \\
	&\leq \frac{1}{4} \|\Lambda^{2+\alpha} u\| _{L^2 }^2
	+ C \|\nabla^2 u\| _{L^2 }^2
	\|\Lambda^{\alpha+1} u\| _{L^2 }^{\frac{4\alpha}{(4\alpha-3)(\alpha+1)}}\|u\| _{L^2 }^\frac{4\alpha^2}{(4\alpha-3)(\alpha+1)}\\
	&\leq \frac{1}{4} \|\Lambda^{2+\alpha} u\| _{L^2 }^2
	+ C\left(1+\|\Lambda^{\alpha+1} u\| _{L^2 }^2+\| u\| _{L^2 }^2\right) \|\Delta u\| _{L^2 }^2.
	\end{aligned}
	\end{equation}
For the second term at the right hand side of \eqref{eq3-18}, in the case $\alpha\geq1$, one gets from the Gagliardo-Nirenberg inequality \eqref{G-GN} that
\begin{equation}\label{eq3-21}
\begin{aligned}
\left| \int_{\mathbb{R}^3}\nabla \Delta u \cdot \nabla (n\nabla \phi)\,dx\right|
 &\leq  \|\nabla^3 u\| _{L^2 }\|\nabla( n \nabla \phi)\| _{L^2 }^2\\
&\leq  C\|\Lambda^{\alpha+2} u\| _{L^2 }^\frac{6}{2+\alpha}
\|u\| _{L^2 }^{2-\frac{6}{2+\alpha}}
+
C\|\nabla\phi\|_{W^{1,\infty}}^2\|n\|_{H^1}^2
\\
&\leq \frac{1}{4} \|\Lambda^{\alpha+2} u\| _{L^2 }^2+
C\left(\|u\| _{L^2 }^2
+\| n\|_{H^1}^2\right).
\end{aligned}
\end{equation}
As for the case $\alpha<1$, one also has
\begin{equation}\label{eq3-210p}
\begin{aligned}
\left| \int_{\mathbb{R}^3}\nabla \Delta u \cdot \nabla (n\nabla \phi)\,dx\right|
 &\leq \|\Lambda^{\alpha-1}\nabla \Delta u\|_{L^2}\|\Lambda^{1-\alpha}\nabla (n\nabla \phi)\|_{L^2}\\
 &\leq C\|\Lambda^{\alpha}\Delta u\|_{L^2} \| \Lambda^{2-\alpha}(n\nabla \phi)\|_{L^2}\\
 &\leq \frac{1}{4} \|\Lambda^{\alpha+2} u\| _{L^2 }^2+
C\| n\|_{H^2}^2,
\end{aligned}
\end{equation}
where we have used
 $$
 \| \Lambda^{2-\alpha}(n\nabla \phi)\|_{L^2}\leq C\|n\nabla \phi\|_{L^2}^{\frac{\alpha}{2}}\|\nabla^2(n\nabla \phi)\|_{L^2}^{1-\frac{\alpha}{2}} \leq C\|\nabla\phi\|_{W^{2,\infty}}\|n\|_{H^2},
 $$
derived from \eqref{G-GN} and $1<2-\alpha<2$. The combination  of \eqref{eq3-18}, \eqref{eq3-20}, \eqref{eq3-21} and \eqref{eq3-210p} yields
 \begin{eqnarray*}
\begin{aligned}
\frac{d}{dt}\|\Delta u\| _{L^2 }^2
+\|\Lambda^{\alpha } \Delta u\| _{L^2 }^2
\leq 
C \left(1+\| u\|_{H^{\alpha+1}}^2\right)\|\Delta u\| _{L^2 }^2
+C\left(\|u\| _{L^2 }^2
+\|n\|_{H^2}^2
\right).
\end{aligned}
\end{eqnarray*}
Hence, thanks to Gr\"{o}nwall's inequality, \eqref{eq3-0}, \eqref{eq3-2}, \eqref{eq3-17} and $\|\Delta g\|_{L^2}\sim \|\nabla^2g\|_{L^2}$ for any $g\in \dot{H}^2$, there holds that, for any $0<T<T_*$,
\begin{equation*}
\begin{aligned}
&\sup_{t\in(0,T]}\|\nabla^2 u\| _{L^2 }+
\int_0^{T} \|\Lambda^{\alpha+2} u\| _{L^2 }^2\, dt\\
&~~\leq  C {\rm exp} \left\{\int_0^{T} 
\left(1+\|u\|_{L^2}^2+\|u\|_{H^{\alpha+1}}^2\right)\,dt
\right\}\\
&~~\quad \cdot\left(
\|\nabla^2 u_0\| _{L^2 }^2+ T
\sup_{t\in(0,T]}
\left(\|u\| _{L^2 }^2+\|n\|_{H^1}^2
\right)+\int_{0}^{T} \|\nabla^2 n\|_{L^2}^2\,dt\right)\leq  C(B_1,T_*),
\end{aligned}
\end{equation*}
which, as well as \eqref{eq3-14} and \eqref{eq3-17}, yields \eqref{eq3-10-1}.

\begin{itemize}
\item \textbf{Step 3: $H^2$-estimates of $n$.}
\end{itemize}

Finally, it suffices to show that, for any $0<T<T_*$,
\begin{equation}\label{eq3-21-1}
\sup_{t\in(0,T]}\|\nabla^2n\|_{L^2}^2+\int_{0}^{T}\|\nabla^3n\|_{L^2}^2\,dt\leq C(B_1,T_*).
	\end{equation}
To achieve it, we first apply $\Delta$  to $\eqref{eq1-1}_1$ and then perform the $L^2$-energy estimate to have 
\begin{equation}\label{eq3-23}
\begin{aligned}
	\frac{1}{2}\frac{d}{dt}
	\|\Delta n\| _{L^2 }^2+
	\|\nabla \Delta n\| _{L^2 }^2
	&= \int_{\mathbb{R}^3} \nabla\Delta n \cdot\nabla( u \cdot \nabla n+ \nabla\cdot(\chi(c) n \nabla c))\,dx\\
 &\leq \frac{1}{4}\|\nabla \Delta n\| _{L^2 }^2+C\|\nabla( u \cdot \nabla n)\|_{L^2}^2+ \|\nabla\nabla\cdot(\chi(c) n \nabla c))\|_{L^2}^2,
\end{aligned}	
\end{equation}
where the second term on the right-hand side of \eqref{eq3-23} can be estimated as
\begin{equation}\nonumber
\begin{aligned}
\|\nabla( u \cdot \nabla n)\|_{L^2}^2&\leq  \|\nabla u\|_{L^6}^2\|\nabla n\|_{L^{3}}^2+\|u\|_{L^{\infty}}^2\|\nabla^2n\|_{L^2}^2\\
&\leq C\|\nabla^2 u\|_{L^2}^2\|\nabla n\|_{H^1}^2+\|u\|_{H^2}^2\|\nabla^2n\|_{L^2}^2.
\end{aligned}
\end{equation}
Note that
\begin{equation}\label{nabla2nc}
\begin{aligned}
|\nabla\nabla\cdot(\chi(c) n \nabla c))|&\leq |\chi''(c)| |\nabla c|^3 n+2|\chi'(c)| |\nabla^2 c| |\nabla c| n+2|\chi'(c)| |\nabla c|^2 |\nabla n|\\
&\quad+|\chi(c)| |\nabla^2n| |\nabla c|+|\chi(c)| |\nabla n| |\nabla^2c|.
\end{aligned}
\end{equation}
By \eqref{nabla2nc} and the Gagliardo-Nirenberg inequality, we handle the last term on \eqref{eq3-23} as follows
\begin{equation}\nonumber
\begin{aligned}
\|\nabla \nabla\cdot(\chi(c) n \nabla c))\|_{L^2}^2& \leq C(1+\|c\|_{H^2}^3)\|n\|_{H^2}^2\|\nabla c\|_{H^2}^2.
\end{aligned}
\end{equation}
Thus, Gr\"{o}nwall's inequality, together with \eqref{eq3-0} and \eqref{eq3-10-1}, leads to \eqref{eq3-21-1}.

\vspace{3mm}

 The combination of \eqref{eq3-0}, \eqref{eq3-10-1} and \eqref{eq3-21-1} gives rise to the required bounds to extend the solution $(n,c,u)$ beyond $T_*$. Thus, we finish the proof of Theorem \ref{Th1.1}.

\subsection{Proof of Theorem \ref{Th1.2}}

Similarly, we show Theorem \ref{Th1.2} towards a contradiction. Assume that the maximal time $T_*$ is finite, and \eqref{eq1-5} fails, i.e., 
\begin{equation}
B_{2}:=\lim_{T\rightarrow T_*}\int_0^{T}\left(\|\nabla c\|_{L^{q_1}}^{p_1}+\|\nabla u\|_{L^{q_2}}^{p_2} \right)\,dt <+\infty \label{blow2}
\end{equation}
for any $(p_i, q_i) (i=1, 2)$ satisfying  \eqref{piqi1}. Our goal is to show that for any $0<T<T_*$, the $H^2$-norms of $(n, c, u)$ can be bounded by some constant depending on $T_*$ and $B_2$, and therefore the solution can be extended beyound $T_*$, which leads to the contradiction.

First, one deduces from $\eqref{eq1-1}_1$, \eqref{cinfty} and  \eqref{G-GN} that
\begin{equation}\nonumber
\begin{aligned}
	\frac{1}{2}\frac{d}{dt}\|n\|_{L^2 }^2+\|\nabla n\|_{L^2 }^2 
	&=
	\int_{\mathbb{R}^3}\chi(c) n\nabla c\cdot \nabla n\,dx\\
	&\leq   \sup_{0\leq c\leq \|c_0\|_{L^\infty }} |\chi (c)|  \|\nabla n\|_{L^2 } \|n\|_{L^\frac{2q_1}{q_1-2}}\|\nabla c\|_{L^{q_1} }\\
 &\leq C \|\nabla n\|_{L^2 }   \|n\|_{L^{2} }^\frac{q_1-3}{q_1} \|\nabla n\|_{L^2 }^\frac{3}{q_1} \|\nabla c\|_{L^{q_1} }\\
		& \leq  \frac{1}{2} \|\nabla n\|_{L^2 }^2+C\|\nabla c\|_{L^{q_1} }^{\frac{2q_1}{q_1-3}}\|n\|_{L^2 }^2,
\end{aligned}
	\end{equation}
which, together with $\frac{2q_1}{q_1-3}\leq p_1$ (see \eqref{piqi1}), leads to
	\begin{equation}\nonumber
	\frac{d}{dt}\|n\| _{L^2 }^2+\|\nabla n\|_{L^2 }^2	\leq C(1+\|\nabla c\|_{L^{q_1}}^{p_1})\|n\| _{L^2 }^2.
\end{equation}
Hence, it follows from \eqref{blow2} that
	\begin{equation}\label{nnnnn}
	\sup_{t\in(0,T]}\|n\|_{L^2}^2+\int_{0}^{T} \|\nabla n\|_{L^2}^2\,d\tau\leq e^{CT+CB_2}\|n_0\|_{L^2}^2
\end{equation}
for any $0<T<T_*$.

Then, recall that \eqref{eq3-4} holds. Arguing similarly as in  \eqref{eq3-222220}  in all the rangle $\alpha>\frac{1}{2}$, one has 
\begin{equation}\label{eq3-10111}
\begin{aligned}
& \frac{d}{dt}\|\nabla c\| _{L^2 }^2+\|\Delta c\| _{L^2 }^2\leq C\|(n, u)\| _{L^2 }^2+\frac{1}{8} \|\Lambda^{1+\alpha}u\|_{L^2}^2.
\end{aligned}
\end{equation}
We now deal with the $\dot{H}^1$-estimates of $u$. As in \eqref{eq3-6}, we have
\begin{equation}\nonumber
	\begin{aligned}
	\frac{1}{2} \frac{d}{dt} \|\nabla u\| _{L^2 }^2	+\|\Lambda ^{\alpha+1} u\| _{L^2 }^2
	=  \int_{\mathbb{R}^3} \Delta u \cdot n \nabla \phi\,dx+\int_{\mathbb{R}^3} \Delta u \cdot (u \cdot \nabla) u\,dx.
	\end{aligned}
\end{equation}
We estimate every term on the right-hand side of the above equality as follows. Arguing similarly as in  \eqref{case2uphi}, one has
\begin{equation}\nonumber
\begin{aligned}
\left|  \int_{\mathbb{R}^3} \Delta u \cdot n \nabla \phi\,dx\right|&\leq C\|\nabla u\| _{L^2 }^2+C\|n\| _{L^2 }^2+C\|\nabla  n\| _{L^2 }^2.
\end{aligned}
\end{equation}
After integrating by parts as in \eqref{eq3-8} and using $\div u=0$ and the Gagliardo–Nirenberg inequality \eqref{G-GN}, we deduce that
\begin{equation}\nonumber
\begin{aligned}
\left|\int_{\mathbb{R}^3} \Delta u \cdot (u \cdot \nabla) u\,dx	\right|&\leq   \|\nabla u\|_{L^3}^3\\
&\leq C  \|\nabla u\|_{L^{q_2}} \|\nabla u\|_{L^{\frac{2q_2}{q_2-1}}}^2\\
&\leq  C  \|\nabla u\|_{L^{q_2}} \|\nabla u\|_{L^{2}}^{2\theta} \|\Lambda^{\alpha}\nabla u\|_{L^2}^{2(1-\theta)}\\
&\leq \frac{1}{8} \|\Lambda^{\alpha+1}u\|_{L^2}^2+C\|\nabla u\|_{L^{q_2}}^{\frac{1}{\theta}} \|\nabla u\|_{L^2}^2,
\end{aligned}
\end{equation}
where $\theta=1-\frac{3}{2q_2\alpha}$ satisfies $\frac{1}{\theta}\leq p_2 $ due to \eqref{piqi1}. It thus holds that 
\begin{equation}\label{ummmmm}
	\begin{aligned}
	\frac{d}{dt} \|\nabla u\| _{L^2 }^2	+\|\Lambda ^{\alpha+1} u\| _{L^2 }^2\leq C(1+\|\nabla u\|_{L^{q_2}}^{p_2}) \|\nabla u\|_{L^2}^2+C\|n\| _{L^2 }^2+C\|\nabla  n\| _{L^2 }^2.
	\end{aligned}
\end{equation}
Adding \eqref{eq3-4}, \eqref{eq3-10111} and \eqref{ummmmm} together and then using Gr\"onwall's inequality, \eqref{blow2} and \eqref{nnnnn}, one has
\begin{eqnarray*}
	\begin{aligned}
	&\sup_{t\in(0,T]}(\|\nabla c\|_{L^2}^2+\| u\| _{H^1}^2)	+\int_{0}^{T}(\|\nabla^2 c\|_{L^2}^2+\|\Lambda ^{\alpha} u\| _{H^1}^2)\,dt\leq C(T_*,B_2). 
		\end{aligned}
\end{eqnarray*}

Finally, repeating the same arguments as in Subsection \ref{subsection31}, Steps 2-3, we are able to  establish the higher-order estimates
\begin{eqnarray*}
	\begin{aligned}
	&\sup_{t\in(0,T]}\|(n, c,u)\|_{H^2}^2	+\int_{0}^{T}(\|(\nabla n,\nabla c)\|_{H^2}^2+\|\Lambda ^{\alpha} u\| _{H^2}^2)\,dt\leq C(T_*,B_2),\quad\quad 0<T<T_*.
		\end{aligned}
\end{eqnarray*}
For the sake of simplicity, we omitted the details here.

Thus, based on the above estimates and Theorem \ref{Thlocal}, one can extend the solution $(n,c,u)$ beyond $T_*$. This contradicts the fact that $T_*$ is the maximal time for existence. The proof of Theorem \ref{Th1.2} is thus complete.

\section{Global existence and time-decay estimates}\label{section4}

In this section, we aim to prove some global existence results under some suitable initial assumptions. First, it is shown that when $\alpha$ is greater than or equal to Lions's index $\frac{5}{4}$, the local solution can be extend to a global one once $\|c_0\|_{L^{\infty}}$ is suitably small. Then, for the range $\frac{3}{4}<\alpha<\frac{5}{4}$, under a initial  mild small condition \eqref{smallness2}, we not only prove the global existence but also establish the uniform-in-time estimates, which enable us to study the large-time behavior of the solution.

\subsection{Proof of Theorem \ref{Th1.3}}\label{subsectionglobal1}



In order to prove \eqref{Th1.3}, we justify the blow-up criterion stated in Theorem \ref{Th1.2} as follows.

\begin{lemma}\label{lemma54blow}
Let $(n,c,u)$ be a strong solution to the Cauchy problem \eqref{eq1-1}-\eqref{eq1-2} on $[0,T_*)$ obtained in Theorem \ref{Thlocal}. Then there exists a constant $\delta_0=\delta_0(p)$ such that if $T_*$ is finite and \eqref{c0small} follows, then it holds for any $0<T<T_*$ that
\begin{equation}\label{nLp}
\begin{aligned}
\sup_{t\in(0,T]}\|n\|_{L^r}\leq C_{r}
\end{aligned}
\end{equation}
for any $r\in(1,\infty)$ and some uniform constant $C_r$.

Furthermore, we have
\begin{equation}\label{L2cu1}
\begin{aligned}
\int_0^{T}\|u\|_{L^{q}}^{p}\,dt\leq C_{T_*},
\end{aligned}
\end{equation}
where $C_{T_*}$ is a constant depending on $T_*$, and $p$, $q$ satisfy
\begin{align}
&\frac{2\alpha}{p}+\frac{3}{q}=\frac{3}{2},\quad 2\leq q\leq \frac{6}{3-2\alpha}\quad\text{for}\quad \frac{1}{2}<\alpha<\frac{3}{2},\quad 2\leq q\leq \infty \quad \text{for}\quad \alpha>\frac{3}{2}.\label{pq}
\end{align}
\end{lemma}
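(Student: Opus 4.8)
The plan is to establish the two bounds \eqref{nLp} and \eqref{L2cu1}; these are exactly the quantities that, once inserted into the Prodi--Serrin criterion of Theorem \ref{Th1.1}, force $T_*=+\infty$ (note that $\|n\|_{L^{q_1}}^{p_1}$ is integrable over the finite interval $(0,T_*)$ as soon as \eqref{nLp} holds for some $q_1>3$, while the scaling line $\frac{2\alpha}{p}+\frac3q=\frac32$ of \eqref{L2cu1} sits inside the admissible region $\frac{2\alpha}{p_2}+\frac3{q_2}\le 2\alpha-1$ precisely because $\alpha\ge\frac54$). The smallness $\|c_0\|_{L^\infty}\le\delta_0$ enters only through the $n$-estimate, whereas the $u$-estimate uses only $\alpha\ge\frac54$ and $n\in L^\infty_tL^2_x$. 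I would organize the argument in three parts: (a) maximum-principle bounds and their smallness consequences, (b) the time-uniform $L^r$-estimate of $n$, (c) the $L^2$-energy estimate of $u$ followed by Gagliardo--Nirenberg interpolation.

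For (a), the maximum principle gives $n\ge 0$ and $0\le c\le\|c_0\|_{L^\infty}\le\delta_0$, cf. \eqref{cinfty}, so along the trajectory $\sup|f(c)|\le L\delta_0$ with $L:=\sup_{[0,\delta_0]}|f'|$ (using $f(0)=0$), and the primitive $\Phi(s):=\int_0^s\chi$ obeys $|\Phi(c)|\le\|\chi\|_{L^\infty[0,\delta_0]}\delta_0$; moreover $\|c(t)\|_{L^p}\le\|c_0\|_{L^p}$ and $\int_0^{T_*}\|\nabla c\|_{L^2}^2\,dt\le\tfrac12\|c_0\|_{L^2}^2$, cf. \eqref{ccL2}. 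For (c), granted $\sup_{(0,T)}\|n\|_{L^2}\le C_2$ from part (b), testing $\eqref{eq1-1}_3$ with $u$ gives $\tfrac12\tfrac{d}{dt}\|u\|_{L^2}^2+\|\Lambda^\alpha u\|_{L^2}^2=-\int u\cdot n\nabla\phi\,dx\le\|\nabla\phi\|_{L^\infty}\|u\|_{L^2}\|n\|_{L^2}$, and Grönwall on $[0,T]$ with $T<T_*<+\infty$ yields $\sup_{(0,T)}\|u\|_{L^2}^2+\int_0^T\|\Lambda^\alpha u\|_{L^2}^2\,dt\le C_{T_*}$. By Lemma \ref{lemmaGN}, $\|u\|_{L^q}\le C\|u\|_{L^2}^{1-\theta}\|\Lambda^\alpha u\|_{L^2}^{\theta}$ with $\theta=\tfrac3\alpha\big(\tfrac12-\tfrac1q\big)$, and $\theta\in[0,1]$ forces exactly $2\le q\le\tfrac6{3-2\alpha}$ when $\tfrac54\le\alpha<\tfrac32$ and $2\le q\le\infty$ when $\alpha\ge\tfrac32$; raising to the power $p=\tfrac2\theta$ and integrating in time, $\int_0^T\|u\|_{L^q}^p\,dt\le C\sup_{(0,T)}\|u\|_{L^2}^{(1-\theta)p}\int_0^T\|\Lambda^\alpha u\|_{L^2}^2\,dt\le C_{T_*}$, which is \eqref{L2cu1} with $\tfrac{2\alpha}p+\tfrac3q=\alpha\theta+\tfrac3q=\tfrac32$.

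Part (b) is the crux. Testing $\eqref{eq1-1}_1$ with $n^{r-1}$, the convective term vanishes by $\nabla\cdot u=0$, the dissipation produces $\tfrac{4(r-1)}{r^2}\|\nabla n^{r/2}\|_{L^2}^2$, and the chemotactic term equals $\tfrac{r-1}{r}\int\chi(c)\nabla c\cdot\nabla(n^r)\,dx=-\tfrac{r-1}{r}\int n^r\big(\chi'(c)|\nabla c|^2+\chi(c)\Delta c\big)\,dx$. For $\int n^r\chi(c)\Delta c$ I would substitute $\Delta c=\partial_t c+u\cdot\nabla c+nf(c)$ from $\eqref{eq1-1}_2$ and use $\chi(c)(\partial_t c+u\cdot\nabla c)=\partial_t\Phi(c)+u\cdot\nabla\Phi(c)$, together with $\nabla\cdot u=0$ and $\eqref{eq1-1}_1$ once more, to recast this contribution as $-\tfrac{d}{dt}\int n^r\Phi(c)\,dx$ plus terms each carrying a factor $\|\Phi(c)\|_{L^\infty}$, $\|f(c)\|_{L^\infty}$ or $\|c\|_{L^\infty}$, hence a factor $\delta_0$; for $\int n^r\chi'(c)|\nabla c|^2$, integrating by parts one factor $\nabla c$ again trades it for $\delta_0$-small terms plus $\int n^r|\Delta c|$, which I would absorb by coupling in an $\dot H^1$-estimate for $c$ where, as in the treatment of \eqref{nablac} in Section \ref{section3}, the convective term $\int(u\cdot\nabla c)\Delta c$ is rewritten as $\int(\nabla^2c:\nabla u)\,c\,dx$ so that it too gains the factor $\|c\|_{L^\infty}$. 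Choosing $\delta_0$ small (depending only on $r$ and the local norms of $\chi,f$), all dangerous contributions are absorbed into $\|\nabla n^{r/2}\|_{L^2}^2$ (made coercive by $\dot H^1(\mathbb{R}^3)\hookrightarrow L^6$), after Young's inequality, and into $\|n\|_{L^r}^r$; since $\big|\int n^r\Phi(c)\big|\le C\delta_0\|n\|_{L^r}^r$, Grönwall then gives $\sup_{(0,T]}\|n\|_{L^r}\le C_r$ for $0<T<T_*$, which is \eqref{nLp} (for $1<r<2$ one also uses $n_0\in L^r$). Since the $u$-dependence throughout (b) enters only via $\int_0^{T_*}\|\Lambda^\alpha u\|_{L^2}^2\,dt<+\infty$, available from (c), parts (b) and (c) should be closed jointly by a short continuation argument.

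The main obstacle is precisely part (b): because the system \eqref{eq1-1} carries no dissipative free energy, the uniform-in-time control of $\|n\|_{L^r}$ rests entirely on $c$ remaining pointwise small, and one has to chase the factor $\|c_0\|_{L^\infty}$ carefully through the coupled $L^r(n)$- and $\dot H^1(c)$-estimates while simultaneously keeping the fluid terms under control; by contrast part (c) — which is where $\alpha\ge\frac54$ is used — is routine once $n\in L^\infty_tL^2_x$. Combining (a)--(c) yields \eqref{nLp} and \eqref{L2cu1}, completing the proof of the lemma.
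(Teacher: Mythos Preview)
Your argument for \eqref{L2cu1} is exactly the paper's: the $L^2$-energy inequality for $u$ (using $\|n\|_{L^2}\le C_2$ from part~(b)) combined with Gr\"onwall over the finite interval $(0,T)$ gives $\sup_{(0,T)}\|u\|_{L^2}^2+\int_0^T\|\Lambda^\alpha u\|_{L^2}^2\,dt\le C_{T_*}$, and then Gagliardo--Nirenberg interpolation between $L^2$ and $\dot H^\alpha$ with $\theta=\tfrac{3}{\alpha}(\tfrac12-\tfrac1q)$ and $p=\tfrac2\theta$ yields \eqref{L2cu1}. The observation about how the scaling line $\tfrac{2\alpha}{p}+\tfrac3q=\tfrac32$ fits into the Prodi--Serrin window of Theorem~\ref{Th1.1} for $\alpha\ge\tfrac54$ is also exactly what the paper uses downstream.

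For \eqref{nLp} the paper gives no proof at all: it simply invokes the coupling structure of $\eqref{eq1-1}_1$--$\eqref{eq1-1}_2$ and refers to \cite[Proposition~2]{ckl2014} and \cite[Lemma~3.1]{tao2011}. Your sketch is therefore more detailed than what the paper provides, and it is in the right spirit, but it diverges from the standard arguments in those references. There one works directly with a coupled functional of the type $\int n^{r}+\kappa\int|\nabla c|^{2r}$ (or a variant with weights in $c$); the cross terms from the chemotactic flux are absorbed by the dissipation of the $c$-part after an integration by parts, with the smallness of $\|c\|_{L^\infty}$ supplying the needed constant. Crucially, all fluid contributions disappear by $\nabla\cdot u=0$ at this level, so \eqref{nLp} closes from $\eqref{eq1-1}_1$--$\eqref{eq1-1}_2$ alone, without the detour through the $\dot H^1(c)$-estimate, without reintroducing $\nabla u$, and without the ``joint continuation'' you propose. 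Your substitution $\Delta c=\partial_t c+u\cdot\nabla c+nf(c)$ is clever, but when you unwind $\int\Phi(c)(\partial_t+u\cdot\nabla)(n^r)$ via $\eqref{eq1-1}_1$ you also generate terms carrying $\chi(c)$ (not $\Phi(c)$), which are \emph{not} $O(\delta_0)$; these would have to be handled by yet another integration by parts, and the bookkeeping becomes delicate. The cited approach is both shorter and avoids this loop.
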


\begin{proof}
If $\|c_{0}\|_{L^{\infty}}\leq \delta^*$ with some suitably small $\delta^*>0$, then we are able to obtain \eqref{nLp} according to the coupling structure of $\eqref{eq1-1}_1$-$\eqref{eq1-1}_2$. The proof can be done by the following similar arguments as in the works \cite[Proposition 2]{ckl2014} or \cite[Lemma 3.1]{tao2011}. For the sake of simplicity, the details are omitted here.

 We then deal with the estimates of $u$.Given the $L^2$-estimate of $n$ due to \eqref{nLp},  it follows from \eqref{eq3-4} that
\begin{equation}\label{u011}
\begin{aligned}
&\sup_{t\in[0,T]}\|u\|_{L^2}^2+2\int_{0}^{T}\|\Lambda^{\alpha}u\|_{L^2}^2\,dt\leq e^{T}( \|u_{0}\|_{L^2}^2+CC_{2}T).
\end{aligned}
\end{equation}
Hence, for $\theta=1-\frac{3}{\alpha}(\frac{1}{2}-\frac{1}{q})$ such that 
$$
\frac{1}{q}=\frac{\theta}{2}+(1-\theta)(\frac{1}{2}-\frac{\alpha}{3}),\quad\quad p(1-\theta)=2,
$$
one has
\begin{equation}\nonumber
\begin{aligned}
\int_0^{T}\|u\|_{L^{q}}^{p} \,dt&\leq C\int_0^{T}\|u\|_{L^{2}}^{p\theta} \|\Lambda^{\alpha}u\|_{L^2}^{p(1-\theta)} \,dt\leq C\sup_{t\in(0,T]} \|u\|_{L^{2}}^{p\theta}\int_{0}^{T}\|\Lambda^{\alpha}u\|_{L^2}^{2}\,dt\leq C_{T},
\end{aligned}
\end{equation}
derived from \eqref{u011}, the Gagliardo–Nirenberg inequality \eqref{G-GN}. This completes the proof of Lemma \ref{lemma54blow}.
\end{proof}

\paragraph{Proof of Theorem \ref{Th1.3}}

Suppose $\alpha\geq \frac{5}{4}$ and that $(n_0,c_0,u_0)$ satisfies the conditions in Theorem \ref{Th1.3}. By virtue of Theorem \ref{Thlocal}, there exists a maximal time $T_{*}>0$ such that a unique strong solution $(n,c,u)$ to the Cauchy problem \eqref{eq1-1}-\eqref{eq1-2} exists on $[0,T_*)$.

We claim $T_{*}=\infty$. Indeed, if $T_{*}<\infty$, then for any $0<T<T_*$, $(n,u)$ satisfies the estimates \eqref{nLp} and \eqref{L2cu1} obtained in Lemma \ref{lemma54blow}. In fact, for $(p,q)$ that satisfies \eqref{pq} such that \eqref{L2cu1} follows, we can take $(p_2,q_2)=(p,q)$ in \eqref{piqi} due to the fact that $\frac{3}{2}\leq 2\alpha-1$ for any $\alpha\geq \frac{5}{4}$. Therefore, the estimates \eqref{nLp} and \eqref{L2cu1} contradict the blow-up criterion in Theorem \ref{Th1.1}, so $(n,c,u)$ is a global strong solution to the Cauchy problem \eqref{eq1-1}-\eqref{eq1-2} that fulfills the properties \eqref{rThm1.3}.

\subsection{Proof of Theorem \ref{thmglobal}: Global existence}\label{subsectionglobal2}


In this subsection, we shall prove the global existence part in Theorem \ref{thmglobal} if the initial $L^2$ energy of $(n_0,c_0, u_0)$ is small but the highest-order norm of initial data can be arbitrarily large. Our proof is based on elaborate energy estimates, the bootstrap argument as well as interpolation inequalities.

To achieve the global existence, the key ingredient is to establish the following uniform a priori estimates.

\begin{proposition}\label{propapriori}
Let the assumptions of Theorem \ref{thmglobal} hold. Define
\begin{align}
\mathcal{E}_{0}:= \|(n_0, c_0,u_0)\|_{L^2}^2,\quad\quad  \mathcal{E}_{2}:= \|(\nabla^2 n_0,\nabla^2 c_0,\nabla^2 u_0)\|_{L^2}^2,\nonumber
\end{align}
and
\begin{align}
S_{f,\chi}:=\sup_{0\leq s\leq \|c_0\|_{L^{\infty}}}(|f(s)|+|f'(s)|+|\chi(s)|+|\chi'(s)|+|\chi''(s)|).\label{Sfchi}
\end{align}
For any $\frac{3}{4}<\alpha< \frac{5}{4}$, suppose that $(n,c,u)$ is a strong solution to the Cauchy problem   \eqref{eq1-1}-\eqref{eq1-2} defined on $[0,T)$ satisfying
\begin{equation}\label{apriori}
S_{f,\chi}^2 \|\nabla c\|_{L^3}^2+\|u\|_{L^3}^2+\int_{0}^{t}\|\nabla u\|_{L^2}^{\frac{4\alpha}{4\alpha-3}}\, d\tau\leq M_{0},
\end{equation}
for any $ 0<t<T$ and some generic constant $M_{0}>0$ given by \eqref{M0}, \eqref{M1}, \eqref{M2}, \eqref{M3} and \eqref{M4} below. Then, there exists a constant $\delta_{1}$ depending only on $S_{f,\chi}$, $\|(1+|x|)\nabla\phi\|_{W^{2,\infty}}$, $\||x|^{1+\alpha}\nabla\phi\|_{L^{\infty}}$ and $\|(\nabla^2n_0,\nabla^2c_0,\nabla^2 u_0)\|_{L^2}$ such that if $\eqref{smallness2}$ holds, then for any $0<t<T$ we have
\begin{equation}\label{apriori1}
\begin{aligned}
&S_{f,\chi}^2 \|\nabla c\|_{L^3}^2+\|u\|_{L^3}^2+\int_{0}^{t}\|\nabla u\|_{L^2}^{\frac{4\alpha}{4\alpha-3}} \,d\tau\leq \frac{1}{2}M_{0},
\end{aligned}
\end{equation}
\end{proposition}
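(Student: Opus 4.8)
The plan is to run a continuation (bootstrap) argument. Assuming \eqref{apriori} holds on $[0,T)$, I would establish a hierarchy of energy estimates at the $L^2$, $\dot H^1$ and $\dot H^2$ levels, and then, interpolating between the small $L^2$-energy $\mathcal E_0\le\delta_1^2$ and the finite (but possibly large) higher-order energy $\mathcal E_2$, improve \eqref{apriori} to \eqref{apriori1}. The constant $M_0$ is to be fixed first, in terms of the absolute constants produced by the estimates below (this is the role of \eqref{M0}--\eqref{M4}); the constant $\delta_1$ is chosen last, in terms of $S_{f,\chi}$, the weighted norms of $\nabla\phi$ and $\mathcal E_2$ (and also of $\|n_0\|_{L^1},\|c_0\|_{L^1}$, which enter through the time-decay of $\|n\|_{L^2}$).

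First, the basic $L^2$-bounds. Testing $\eqref{eq1-1}_2$ with $c$ and using $f\ge0$, $n,c\ge0$ gives $\sup_{t}\|c\|_{L^2}^2+\int_0^t\|\nabla c\|_{L^2}^2\,d\tau\le\|c_0\|_{L^2}^2$. Testing $\eqref{eq1-1}_1$ with $n$, bounding the chemotactic term by $CS_{f,\chi}\|\nabla c\|_{L^3}\|\nabla n\|_{L^2}^2$ via Sobolev's embedding and absorbing it using the smallness of $S_{f,\chi}\|\nabla c\|_{L^3}$ from \eqref{apriori}, one obtains (essentially) $\tfrac{d}{dt}\|n\|_{L^2}^2+\|\nabla n\|_{L^2}^2\le0$; combined with the conservation of mass $\|n(t)\|_{L^1}=\|n_0\|_{L^1}$ (note $\nabla\cdot u=0$) and a Nash-type inequality this yields the algebraic decay $\|n(t)\|_{L^2}\lesssim(1+t)^{-3/4}$, hence $\int_0^\infty\|n\|_{L^2}^2\,d\tau\le C\|n_0\|_{L^1}^{4/3}\|n_0\|_{L^2}^{2/3}$, which is small once $\delta_1$ is small relative to $\|n_0\|_{L^1}$. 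Testing $\eqref{eq1-1}_3$ with $u$ and writing $\int u\cdot n\nabla\phi\,dx=\int\frac{u}{|x|^\alpha}\cdot\bigl(|x|^\alpha\nabla\phi\bigr)n\,dx$, Hardy's inequality (Lemma \ref{lemmahardy}, valid since $\alpha<\tfrac32$) together with the boundedness of $|x|^\alpha\nabla\phi$ (a consequence of $\nabla\phi\in W^{2,\infty}$ and $|x|^{1+\alpha}\nabla\phi\in L^\infty$) gives $\tfrac{d}{dt}\|u\|_{L^2}^2+\|\Lambda^\alpha u\|_{L^2}^2\le C\|n\|_{L^2}^2$, so $\sup_t\|u\|_{L^2}^2+\int_0^t\|\Lambda^\alpha u\|_{L^2}^2\,d\tau$ is likewise small. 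Thus all $L^2$-energies of $n,c,u$ and their dissipation integrals are $O(\delta_1)$.

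Next, the higher-order bounds. At the $\dot H^1$ level, \eqref{apriori} lets us absorb the convective and chemotactic contributions and derive $\tfrac{d}{dt}\|\nabla c\|_{L^2}^2+\|\Delta c\|_{L^2}^2\le C\|\nabla n\|_{L^2}^2$, $\tfrac{d}{dt}\|\nabla n\|_{L^2}^2+\tfrac34\|\Delta n\|_{L^2}^2\le C\|\nabla^2 c\|_{L^2}^4\|\nabla n\|_{L^2}^2$, and $\tfrac{d}{dt}\|\nabla u\|_{L^2}^2+\|\Lambda^{\alpha+1}u\|_{L^2}^2\le C\|\nabla n\|_{L^2}^2+C\|\Lambda^\alpha u\|_{L^2}^2+C\|\nabla u\|_{L^2}^{\frac{4\alpha}{4\alpha-3}}\|\nabla u\|_{L^2}^2$; the key point in the last one is that Gr\"onwall only needs $\int_0^t\|\nabla u\|_{L^2}^{\frac{4\alpha}{4\alpha-3}}\,d\tau\le M_0$, the scaling-critical quantity in \eqref{apriori}. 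Since $\|(\nabla n_0,\nabla c_0,\nabla u_0)\|_{L^2}\le\mathcal E_0^{1/4}\mathcal E_2^{1/4}$ is small and $\int_0^\infty\|\nabla^2 c\|_{L^2}^4\,d\tau\le(\sup_t\|\nabla c\|_{L^2}^2)\int_0^\infty\|\nabla^3 c\|_{L^2}^2\,d\tau$, one obtains smallness of $\sup_t\|(\nabla n,\nabla c,\nabla u)\|_{L^2}^2$ and of $\int_0^\infty\|(\nabla^2 n,\nabla^2 c,\Lambda^{\alpha+1}u)\|_{L^2}^2\,d\tau$ (the apparent circularity $\dot H^1$-$n\leftrightarrow\dot H^2$-$c$ is resolved by carrying out the estimates in the order $L^2$-$n\to\dot H^1$-$u\to\dot H^2$-$c\to\dot H^1$-$n$). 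At the $\dot H^2$ level the same scheme --- again using \eqref{apriori} for the convective term and handling $n\nabla\phi$ through self-adjointness of $\Lambda^\sigma$, the product law \eqref{product0} and the weight hypotheses on $\phi$ --- gives $\sup_t\|(\nabla^2 n,\nabla^2 c,\nabla^2 u)\|_{L^2}^2+\int_0^\infty\|(\nabla^3 n,\nabla^3 c,\Lambda^{\alpha+2}u)\|_{L^2}^2\,d\tau\le C$, a constant depending on $S_{f,\chi}$, the weighted $\phi$-norms and $\mathcal E_2$ --- finite but not small.

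Finally one closes the bootstrap. By Gagliardo--Nirenberg, $\|\nabla c\|_{L^3}^2\le C\|\nabla c\|_{L^2}\|\nabla^2 c\|_{L^2}$ and $\|u\|_{L^3}^2\le C\|u\|_{L^2}\|\nabla u\|_{L^2}$, and in each product one factor is small (by the previous steps) while the other is bounded by a $\mathcal E_2$-dependent constant; moreover, interpolating $\|\nabla u\|_{L^2}$ between $\|\Lambda^\alpha u\|_{L^2}$ and $\|\Lambda^{\alpha+1}u\|_{L^2}$ (for $\tfrac34<\alpha<1$) or between $\|u\|_{L^2}$ and $\|\Lambda^\alpha u\|_{L^2}$ (for $1\le\alpha<\tfrac54$) and using H\"older in time, $\int_0^t\|\nabla u\|_{L^2}^{\frac{4\alpha}{4\alpha-3}}\,d\tau$ is bounded by a product of a small power of a small quantity and a finite power of $\int_0^\infty\|\Lambda^\alpha u\|_{L^2}^2\,d\tau$. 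Taking $\delta_1$ small in terms of $S_{f,\chi}$, the weighted $\phi$-norms and $\|(\nabla^2 n_0,\nabla^2 c_0,\nabla^2 u_0)\|_{L^2}$ then makes each of the three terms in \eqref{apriori1} at most $\tfrac14 M_0$, which proves \eqref{apriori1}. I expect the main obstacle to be exactly this last interpolation (and the cognate treatment of the convective term in the $\dot H^1$- and $\dot H^2$-energy inequalities) when $\alpha$ is near $\tfrac34$, where the exponent $\tfrac{4\alpha}{4\alpha-3}$ is large: it closes because $\sup_t\|\Lambda^{\alpha+1}u\|_{L^2}<\infty$ from the $\dot H^2$-level bound and because $\tfrac{4\alpha^2}{4\alpha-3}>2$ for every $\alpha>\tfrac34$, so the $\|\Lambda^\alpha u\|_{L^2}$-factor can be split off against $\int_0^\infty\|\Lambda^\alpha u\|_{L^2}^2\,d\tau$; the only other delicate point is the bookkeeping for $M_0$ (fixed absolutely) and $\delta_1$ (depending on $\|n_0\|_{L^1},\|c_0\|_{L^1}$ at the $L^2$ level and on $\mathcal E_2$ above) throughout the argument.
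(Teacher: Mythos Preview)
Your bootstrap scheme is exactly the paper's: Lemmas~\ref{lemmabasic}--\ref{lemmaH2} establish the $L^2$, $\dot H^1$, $\dot H^2$ hierarchies under \eqref{apriori}, and the proposition is closed by the interpolations you describe (for $\tfrac34<\alpha<1$ the paper interpolates $\|\nabla u\|_{L^2}$ between $\|u\|_{L^2}$ and $\|\Lambda^{1+\alpha}u\|_{L^2}$ rather than between $\|\Lambda^\alpha u\|_{L^2}$ and $\|\Lambda^{1+\alpha}u\|_{L^2}$, which spares the $\dot H^2$ bound on $u$ at that point, but your version works too).

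The one substantive difference is your treatment of $\int u\cdot n\nabla\phi\,dx$ at the $L^2$ level. You use a single Hardy factor $|x|^\alpha$ on $u$, land on $C\|n\|_{L^2}^2$ on the right, and then need Nash decay plus $\|n\|_{L^1}$-conservation to make $\int_0^\infty\|n\|_{L^2}^2\,d\tau$ small; this is correct but forces $\delta_1$ to depend on $\|n_0\|_{L^1}$, as you note. The paper instead exploits the full hypothesis $|x|^{1+\alpha}\nabla\phi\in L^\infty$ with \emph{two} Hardy factors,
\[
\Bigl|\int_{\mathbb{R}^3} u\cdot n\nabla\phi\,dx\Bigr|\le \||x|^{1+\alpha}\nabla\phi\|_{L^\infty}\Bigl\|\frac{n}{|x|}\Bigr\|_{L^2}\Bigl\|\frac{u}{|x|^\alpha}\Bigr\|_{L^2}\le C\|\nabla n\|_{L^2}\|\Lambda^\alpha u\|_{L^2},
\]
so the right-hand side is $C\|\nabla n\|_{L^2}^2$, absorbed directly by the dissipation $\int_0^t\|\nabla n\|_{L^2}^2\,d\tau\le\|n_0\|_{L^2}^2$ from the $L^2$-$n$ estimate. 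That is why the proposition can assert that $\delta_1$ depends only on the listed quantities and not on $\|n_0\|_{L^1}$.
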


\vspace{2mm}

The proof of Proposition \ref{propapriori} relies on the uniform estimates obtained in Lemmas \ref{lemmabasic}-\ref{lemmaH2} below.


\begin{lemma}\label{lemmabasic}
Under the assumptions of Proposition \ref{propapriori}, if \eqref{apriori} holds with some constant $M_0>0$, then for all $t\in(0,T)$, we have
\begin{equation}\label{basic0}
\begin{aligned}
&\sup_{\tau\in (0,t]}\|(n, c,u)\|_{L^2}^2+\int_{0}^{t}\|(\nabla n,\nabla c,\Lambda^{\alpha}u)\|_{L^2}^2\,d\tau\leq C_{1,\phi} \mathcal{E}_0,
\end{aligned} 
\end{equation}
for
$$
C_{1,\phi}:=C(1+\||x|^{1+\alpha}\nabla\phi\|_{L^{\infty}}^2).
$$
\end{lemma}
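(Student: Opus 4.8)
The plan is to carry out three basic $L^2$ energy estimates, for $c$, $n$ and $u$ in that order; each is made \emph{uniform in time} by exploiting the a priori smallness \eqref{apriori} together with Hardy's inequality (Lemma \ref{lemmahardy}), so that no Gr\"onwall argument in time is needed.

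First I would test $\eqref{eq1-1}_2$ with $c$. The convective term vanishes since $\nabla\cdot u=0$, and because $n,c\geq0$, $f(c)\geq0$ by \eqref{H2} and $0\leq c\leq\|c_0\|_{L^\infty}$ by the maximum principle (cf. \eqref{cinfty}), the remaining term $-\int_{\mathbb{R}^3}nf(c)c\,dx$ is nonpositive; hence $\sup_{\tau\in(0,t]}\|c\|_{L^2}^2+2\int_0^t\|\nabla c\|_{L^2}^2\,d\tau\leq\|c_0\|_{L^2}^2$. Next I would test $\eqref{eq1-1}_1$ with $n$: the convective term again drops, and the chemotaxis term $\int_{\mathbb{R}^3}\chi(c)n\nabla c\cdot\nabla n\,dx$ is controlled, via H\"older with exponents $(6,3,2)$, the Sobolev embedding $\dot H^1\hookrightarrow L^6$, and $|\chi(c)|\leq S_{f,\chi}$, by $CS_{f,\chi}\|\nabla c\|_{L^3}\|\nabla n\|_{L^2}^2$. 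Choosing the generic constant $M_0$ in \eqref{apriori} small enough that $CS_{f,\chi}\|\nabla c\|_{L^3}\leq CM_0^{1/2}\leq\tfrac12$, this term is absorbed into half of the dissipation, which gives $\sup_{\tau\in(0,t]}\|n\|_{L^2}^2+\int_0^t\|\nabla n\|_{L^2}^2\,d\tau\leq\|n_0\|_{L^2}^2$.

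The main obstacle is the buoyancy forcing $-\int_{\mathbb{R}^3}u\cdot n\nabla\phi\,dx$ appearing when testing $\eqref{eq1-1}_3$ with $u$; bounding it crudely by $\|\nabla\phi\|_{L^\infty}\|n\|_{L^2}\|u\|_{L^2}$ and invoking Gr\"onwall would only produce a time-growing bound. Instead I would write $\int_{\mathbb{R}^3}u\cdot n\nabla\phi\,dx=\int_{\mathbb{R}^3}(|x|^{-\alpha}u)\cdot(|x|^{-1}n)\,(|x|^{1+\alpha}\nabla\phi)\,dx$, factor out $\||x|^{1+\alpha}\nabla\phi\|_{L^\infty}$ (finite by the hypotheses of Theorem \ref{thmglobal}), and apply Hardy's inequality twice: $\||x|^{-\alpha}u\|_{L^2}\leq C\|\Lambda^\alpha u\|_{L^2}$, legitimate since $0\leq\alpha<\tfrac32$, and $\||x|^{-1}n\|_{L^2}\leq C\|\nabla n\|_{L^2}$. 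Young's inequality then absorbs $\tfrac12\|\Lambda^\alpha u\|_{L^2}^2$ into the dissipation and leaves $C\||x|^{1+\alpha}\nabla\phi\|_{L^\infty}^2\|\nabla n\|_{L^2}^2$ on the right. Integrating in time and inserting the bound $\int_0^t\|\nabla n\|_{L^2}^2\,d\tau\leq\|n_0\|_{L^2}^2$ from the previous step yields $\sup_{\tau\in(0,t]}\|u\|_{L^2}^2+\int_0^t\|\Lambda^\alpha u\|_{L^2}^2\,d\tau\leq\|u_0\|_{L^2}^2+C\||x|^{1+\alpha}\nabla\phi\|_{L^\infty}^2\|n_0\|_{L^2}^2$.

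Finally, summing the three estimates and using that $\|n_0\|_{L^2}^2$, $\|c_0\|_{L^2}^2$, $\|u_0\|_{L^2}^2$ are each $\leq\mathcal{E}_0$ gives \eqref{basic0} with $C_{1,\phi}=C(1+\||x|^{1+\alpha}\nabla\phi\|_{L^\infty}^2)$. The only points requiring care are the choice of weight exponents, so that both Hardy inequalities apply (using $\alpha<\tfrac54<\tfrac32$) and $|x|^{1+\alpha}\nabla\phi$ remains bounded, and the smallness of $M_0$, which must be taken small enough to close the chemotaxis estimate for $n$.
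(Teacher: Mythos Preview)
Your proposal is correct and follows essentially the same route as the paper: first the uniform $L^2$ estimate for $c$ via the sign of $nf(c)c$, then the $L^2$ estimate for $n$ with the chemotaxis term absorbed using $S_{f,\chi}\|\nabla c\|_{L^3}\leq M_0^{1/2}$, and finally the $L^2$ estimate for $u$ handled by the weighted splitting $|x|^{1+\alpha}\nabla\phi\cdot |x|^{-1}n\cdot |x|^{-\alpha}u$ together with two applications of Hardy's inequality. The paper's argument is identical in structure and in the key ideas.
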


\begin{proof}
We recall that $c$ satisfies \eqref{ccL2} which is uniform in time. Taking the inner product of $\eqref{eq1-1}_{1}$ by $n$, we derive
\begin{equation}\nonumber
\begin{aligned}
&\frac{1}{2}\frac{d}{dt}\|n\|_{L^2}^2+\|\nabla n\|_{L^2}^2=\int_{\mathbb{R}^3} n\nabla n\cdot \nabla c \chi(c)\,dx.
\end{aligned} 
\end{equation}
According to the Sobolev embedding and $0\leq c\leq \|c_{0}\|_{L^{\infty}}$, one has
\begin{equation}\nonumber
\begin{aligned}
\left|\int_{\mathbb{R}^3} n\nabla n\cdot \nabla c \chi(c) \,dx\right|&\leq \sup_{0\leq s\leq \|c_{0}\|_{L^{\infty}}} \chi(s) \|\nabla n\|_{L^2} \|\nabla c\|_{L^3}\|n\|_{L^6}   \leq C_1 S_{f,\chi} \|\nabla c\|_{L^3} \|\nabla n\|_{L^2}^2,
\end{aligned} 
\end{equation}
where the constant $S_{f,\chi}$ is given by \eqref{Sfchi}. Therefore, we arrive at
\begin{equation}\nonumber
\begin{aligned}
&\frac{1}{2}\frac{d}{dt}\|n\|_{L^2}^2+\left(1-C_1 S_{f,\chi}\|\nabla c\|_{L^3}\right)\|\nabla n\|_{L^2}^2\leq 0,
\end{aligned} 
\end{equation}
where $C_1>0$ is a  generic constant. As long as \eqref{apriori} holds with 
\begin{align}
M_0\leq \frac{1}{(2C_1)^2},\label{M0}
\end{align}
remembering $\|\Delta c\|_{L^2}^2 \sim \|\nabla^2 c\|_{L^2}^2$, we are able to deduce that
\begin{equation}\label{nL2}
\begin{aligned}
&\sup_{\tau\in(0,t]}\|n\|_{L^2}^2+\int_0^t\|\nabla n\|_{L^2}^2\,d\tau\leq \|n_0\|_{L^2}^2.
\end{aligned} 
\end{equation}

On the other hand, from  $\eqref{eq1-1}_{3}$ and Hardy's inequality in Lemma \ref{lemmahardy}, one has
\begin{equation}\nonumber
\begin{aligned}
\frac{1}{2}\frac{d}{dt}\|u\|_{L^2}^2+\|\Lambda^\alpha u\|_{L^2}^2&=-\int_{\mathbb{R}^3} n\nabla \phi \cdot u\,dx\\
&\leq \||x|^{1+\alpha}\nabla\phi\|_{L^{\infty}}\Big\|\frac{n}{|x|}\Big\|_{L^2}\Big\|\frac{u}{|x|^{\alpha}}\Big\|_{L^{2}}\\
&\leq \frac{1}{2}\|\Lambda^{\alpha}u\|_{L^2}^2+C\||x|^{1+\alpha}\nabla\phi\|_{L^{\infty}}^2\|\nabla n\|_{L^{2}}^2,
\end{aligned} 
\end{equation}
which, as well as \eqref{nL2}, yields
\begin{equation}\nonumber
\begin{aligned}
&\|u\|_{L^2}^2+\int_{0}^{t}\|\Lambda^{\alpha}u\|_{L^2}^2\,d\tau\leq \|u_{0}\|_{L^2}+C\||x|^{1+\alpha}\nabla\phi\|_{L^{\infty}}^2\|n_0\|_{L^2}^2.
\end{aligned} 
\end{equation}
Hence, we end up with \eqref{basic0}.


\end{proof}


\begin{lemma}\label{lemmahigher}
Under the assumptions of Proposition \ref{propapriori}, if we have \eqref{apriori} with some constant $M_0>0$ and let $\mathcal{E}_0\leq 1$, then for all $t\in(0,T)$, it holds that 
\begin{equation}\label{h1}
\begin{aligned}
\sup_{\tau\in(0,t]}&\|(\nabla n,\nabla c,\nabla u)\|_{L^2}^2+\int_{0}^{t}(\|(\nabla^2n,\nabla^2 c,\nabla\Lambda^{\alpha}u)\|_{L^2}^2)\,d\tau\\
&\leq \mathcal{C}_{2,\phi,f,\chi} (1+\mathcal{E}_2 ) \mathcal{E}_0^{\frac{1}{2}},
\end{aligned} 
\end{equation}
where $\mathcal{C}_{2,\phi,f,\chi}$ is given by
\begin{equation}\nonumber
\begin{aligned}
\mathcal{C}_{2,\phi,f,\chi}:&=C(1+S_{f,\chi}^4)(1+\|(1+|x|^{1+\alpha})\nabla\phi\|_{L^{\infty}}^6)
\end{aligned}
\end{equation}
for some universal constant  $C>0$.
\end{lemma}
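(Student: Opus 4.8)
\emph{Proof plan.} I would establish \eqref{h1} by running $\dot{H}^1$ energy estimates for $c$, $n$ and $u$ in turn, organized so that (i) every coefficient multiplying a dissipation term is kept strictly positive once the a priori bound \eqref{apriori} is imposed with $M_0$ small, (ii) every source term that survives is integrable in time with time integral $\lesssim\mathcal{E}_0$ by Lemma \ref{lemmabasic}, and (iii) every Gr\"onwall integrating factor is uniformly finite. Adding the three inequalities, applying Gr\"onwall's lemma and converting the $\dot{H}^1$ data via the interpolation $\|\nabla g_0\|_{L^2}^2\le\|g_0\|_{L^2}\|\nabla^2 g_0\|_{L^2}\le\mathcal{E}_0^{1/2}\mathcal{E}_2^{1/2}\le(1+\mathcal{E}_2)\mathcal{E}_0^{1/2}$ would then give \eqref{h1}; the restriction $\mathcal{E}_0\le 1$ is what lets me also dominate the genuinely quadratic sources $\lesssim\mathcal{E}_0$ by $\mathcal{E}_0^{1/2}$, so that the power $\tfrac12$ in \eqref{h1} (crucial for closing the bootstrap of Proposition \ref{propapriori}) is legitimate. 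Throughout I keep the maximum principle $0\le c\le\|c_0\|_{L^\infty}$ of \eqref{cinfty} and the constant $S_{f,\chi}$ of \eqref{Sfchi} explicit, so that the constants assemble into $\mathcal{C}_{2,\phi,f,\chi}$.

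For $c$: testing $\eqref{eq1-1}_2$ with $-\Delta c$, integrating by parts using $\div u=0$ and invoking $\dot{H}^1(\mathbb{R}^3)\hookrightarrow L^6$, the transport term is absorbed up to $C\|u\|_{L^3}^2\|\Delta c\|_{L^2}^2$ and the reaction term $\int n\,f(c)\,\Delta c\,dx$ up to $\tfrac14\|\Delta c\|_{L^2}^2+CS_{f,\chi}^2\|n\|_{L^2}^2$, yielding $\tfrac{d}{dt}\|\nabla c\|_{L^2}^2+(1-C\|u\|_{L^3}^2)\|\Delta c\|_{L^2}^2\le C\|\nabla n\|_{L^2}^2$. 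For $u$: testing $\eqref{eq1-1}_3$ with $-\Delta u$, the convective term is bounded through $|\int(u\cdot\nabla u)\cdot\Delta u\,dx|\lesssim\|\nabla u\|_{L^3}^3$ and the Gagliardo-Nirenberg inequality \eqref{G-GN}, valid for $\alpha>\tfrac34$, by $\tfrac14\|\Lambda^{\alpha+1}u\|_{L^2}^2+C\|\nabla u\|_{L^2}^{\frac{4\alpha}{4\alpha-3}}\|\nabla u\|_{L^2}^2$, the Gr\"onwall factor $\|\nabla u\|_{L^2}^{4\alpha/(4\alpha-3)}$ being precisely the scaling-critical quantity whose time integral \eqref{apriori} keeps below $M_0$; the coupling force is handled as $\int n\nabla\phi\cdot\Delta u=-\int\Lambda^{1-\alpha}\nabla(n\nabla\phi)\cdot\Lambda^{\alpha-1}\nabla u\,dx$, so that only $\|\Lambda^\alpha u\|_{L^2}$ appears on the velocity side, while $\|\Lambda^{2-\alpha}(n\nabla\phi)\|_{L^2}$ is controlled by the product law \eqref{product}, Hardy's inequality (Lemma \ref{lemmahardy}) and the weighted bounds $\|(1+|x|)\nabla\phi\|_{W^{2,\infty}}$, $\||x|^{1+\alpha}\nabla\phi\|_{L^\infty}$; this leaves only the time-integrable sources $\|\nabla n\|_{L^2}^2$ and $\|\Lambda^\alpha u\|_{L^2}^2$.

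The delicate estimate is for $n$: testing $\eqref{eq1-1}_1$ with $-\Delta n$ and expanding $\nabla\cdot(\chi(c)n\nabla c)=\chi'(c)n|\nabla c|^2+\chi(c)\nabla n\cdot\nabla c+\chi(c)n\Delta c$, the terms $\int(u\cdot\nabla n)\Delta n\,dx$ and $\int\chi(c)\nabla n\cdot\nabla c\,\Delta n\,dx$ are absorbed into a small multiple of $\|\Delta n\|_{L^2}^2$ with prefactor $\lesssim S_{f,\chi}^2\|\nabla c\|_{L^3}^2+\|u\|_{L^3}^2\le M_0$, while the remaining two terms, estimated via $\|n\|_{L^6}\lesssim\|\nabla n\|_{L^2}$, $\|\nabla c\|_{L^6}\lesssim\|\nabla^2 c\|_{L^2}$ and the Gagliardo-Nirenberg bound $\|n\|_{L^\infty}\lesssim\|\nabla n\|_{L^2}^{1/2}\|\Delta n\|_{L^2}^{1/2}$, produce
\begin{equation}\nonumber
\tfrac{d}{dt}\|\nabla n\|_{L^2}^2+\big(\tfrac34-C\|(\nabla c,u)\|_{L^3}^2\big)\|\Delta n\|_{L^2}^2\le CS_{f,\chi}^4\,\|\nabla^2 c\|_{L^2}^4\,\|\nabla n\|_{L^2}^2 .
\end{equation}
The main obstacle is to make the Gr\"onwall integrating factor $\int_0^t\|\nabla^2 c\|_{L^2}^4\,d\tau$ uniformly finite: the $\dot{H}^1$ estimate for $c$ only yields $\int_0^t\|\Delta c\|_{L^2}^2\,d\tau$, so I would close the argument by the interpolation $\|\nabla^2 c\|_{L^2}^4\lesssim\|\nabla c\|_{L^2}^2\|\nabla^3 c\|_{L^2}^2$ together with the $\dot{H}^2$-dissipation $\int_0^t\|\nabla^3 c\|_{L^2}^2\,d\tau$, i.e. by establishing \eqref{h1} jointly with the $\dot{H}^2$-bounds of Lemma \ref{lemmaH2} inside one coupled Gr\"onwall/bootstrap (the $c\leftrightarrow n$ coupling being the genuinely hard point). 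Once all integrating factors are bounded by $M_0$ and by the quantities of Lemma \ref{lemmabasic}, the exponential prefactor is a constant absorbable into $\mathcal{C}_{2,\phi,f,\chi}$, and summing the three inequalities and integrating in time produces the bound $\mathcal{C}_{2,\phi,f,\chi}\big(\|(\nabla n_0,\nabla c_0,\nabla u_0)\|_{L^2}^2+C\mathcal{E}_0\big)\le\mathcal{C}_{2,\phi,f,\chi}(1+\mathcal{E}_2)\mathcal{E}_0^{1/2}$, which is \eqref{h1}.
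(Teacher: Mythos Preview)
Your overall architecture—$\dot H^1$ estimates for $c$, $n$, $u$ in turn, absorbing the dissipation losses via \eqref{apriori} with $M_0$ small, then closing through $\|\nabla g_0\|_{L^2}^2\le\mathcal E_0^{1/2}\mathcal E_2^{1/2}$—is exactly the paper's. Two concrete steps, however, diverge from the paper and are genuine gaps.

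\emph{Reaction term in the $\nabla c$ estimate.} You bound $\int nf(c)\Delta c\,dx$ by $\tfrac14\|\Delta c\|_{L^2}^2+CS_{f,\chi}^2\|n\|_{L^2}^2$ and then assert the source $C\|\nabla n\|_{L^2}^2$ in the next line. These are inconsistent: $\int_0^t\|n\|_{L^2}^2\,d\tau$ is \emph{not} controlled uniformly by Lemma~\ref{lemmabasic} (which only gives $\sup_\tau\|n\|_{L^2}^2$ and $\int_0^t\|\nabla n\|_{L^2}^2\,d\tau$), so your estimate as written would grow linearly in $t$. The paper's fix is structural: since $f(0)=0$, write $f(c)=c\int_0^1 f'(\theta c)\,d\theta$, so that $\|nf(c)\|_{L^2}\le S_{f,\chi}\|c\|_{L^3}\|n\|_{L^6}\le CS_{f,\chi}\|c\|_{L^3}\|\nabla n\|_{L^2}$, and then bound $\|c\|_{L^3}\le C\|c_0\|_{L^2}^{1/2}\|\nabla^2 c_0\|_{L^2}^{1/2}$ via the maximum principle \eqref{cinfty} and interpolation. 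This produces the time-integrable source $\|\nabla n\|_{L^2}^2$ with a coefficient $\sim\mathcal E_0^{1/2}\mathcal E_2^{1/2}$, which is what makes \eqref{nablace} uniform.

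\emph{The $\nabla n$ integrating factor.} The paper does \emph{not} run Gr\"onwall on \eqref{ddtnableu3o11}, and it does \emph{not} couple to the $\dot H^2$ estimates. Your plan to control $\int_0^t\|\nabla^2 c\|_{L^2}^4\,d\tau$ through $\|\nabla^2 c\|_{L^2}^4\lesssim\|\nabla c\|_{L^2}^2\|\nabla^3 c\|_{L^2}^2$ and then borrow $\int_0^t\|\nabla^3 c\|_{L^2}^2\,d\tau$ from Lemma~\ref{lemmaH2} is circular: Lemma~\ref{lemmaH2} is proved \emph{after} Lemma~\ref{lemmahigher} and uses \eqref{h1} repeatedly (in \eqref{nabla2c} and \eqref{LLambdau0}). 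Even if you merged the two lemmas into one joint bootstrap, the Gr\"onwall exponential would carry a factor of $\int_0^t\|\nabla^3 c\|_{L^2}^2\,d\tau$, which by \eqref{HH2} is $\lesssim\mathcal E_2^3$; you would then obtain $e^{C(1+\mathcal E_2^k)}$ for some $k$, destroying the linear $(1+\mathcal E_2)$ dependence stipulated in \eqref{h1} and in the explicit constant $\mathcal C_{2,\phi,f,\chi}$. The paper's route is instead to integrate the $\nabla n$ inequality directly and estimate the resulting time integral of the right-hand side using only the already-established bounds \eqref{basic0} and \eqref{nablace}, so that no exponential in $\mathcal E_2$ ever appears.
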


\begin{proof}
From $\eqref{eq1-1}_{2}$, it holds that
\begin{equation}\nonumber
\begin{aligned}
\frac{1}{2}\frac{d}{dt}\|\nabla c\|_{L^2}^2+\|\Delta c\|_{L^2}^2
\leq \|u\cdot\nabla c\|_{L^2}^2+\|n f(c)\|_{L^2}^2+\frac{1}{2}\|\Delta c\|_{L^2}^2.
\end{aligned}
\end{equation}
Regarding the first nonlinear term, one has
\begin{equation}\label{mmm1d}
\begin{aligned}
\|u\cdot\nabla c\|_{L^2}^2\leq \|u\|_{L^{3}}^2\|\nabla c\|_{L^6}^2\leq C_2\|u\|_{L^{3}}^2\|\Delta c\|_{L^2}^2
\end{aligned}
\end{equation}
for some constant $C_2>0$. Due to $f(0)=0$, we can write $f(c)=\tilde{f}(c)c$ with $\tilde{f}(c)=\int_{0}^{1}f'(\theta c)\,d\theta$. Therefore, the second nonlinear term is handled by
\begin{equation}\nonumber
\begin{aligned}
\|n f(c)\|_{L^2}^2&\leq S_{f,\chi}^2 \|n\|_{L^6}^2\|c\|_{L^3}^2\leq C S_{f,\chi}^2\|c\|_{L^3}^2\|\nabla n\|_{L^2}^2\\
&\leq C_2 S_{f,\chi}^2 \|c_0\|_{L^2}\|\nabla^2c_0\|_{L^2}\|\nabla n\|_{L^2}^2,
\end{aligned}
\end{equation}
where we noted $\|c\|_{L^3}\leq C \|c\|_{L^2}^{\frac{1}{3}}\|c\|_{L^{\infty}}^{\frac{2}{3}}\leq C\|c_0\|_{L^2}^{\frac{1}{3}}\|c_0\|_{L^{\infty}}^{\frac{2}{3}}\leq C\|c_0\|_{L^2}^{\frac{1}{2}}\|\nabla^2c_0\|_{L^2}^{\frac{1}{2}} $.
Thus, together with \eqref{cinfty}-\eqref{ccL2}, we arrive at
\begin{equation}\nonumber
\begin{aligned}
\frac{1}{2}\frac{d}{dt}\|\nabla c\|_{L^2}^2+(1-C_2\|u\|_{L^3}^2)\|\Delta c\|_{L^2}^2\leq C_2S_{f,\chi}^2\|c_0\|_{L^2}\|\nabla^2c_0\|_{L^2} \|\nabla n\|_{L^2}^2.
\end{aligned}
\end{equation}
Let $M_0$ satisfy 
\begin{equation}\label{M1}
\begin{aligned}
&M_0\leq \frac{1}{2C_2}.
\end{aligned}
\end{equation}
This leads to
\begin{equation}\label{nablace}
\begin{aligned}
\sup_{\tau\in(0,t]}\|\nabla c\|_{L^2}^2+\int_{0}^{t}\|\nabla^2 c\|_{L^2}^2\,d\tau&\leq \|\nabla c_0\|_{L^2}^2+ 2 C_2 M_0 \|c_0\|_{L^2}\|\nabla^2c_0\|_{L^2} \int_{0}^{t}\|\nabla n\|_{L^2}^2\,d\tau\\
&\leq \|\nabla c_0\|_{L^2}^2+2 C_2 M_0 \|c_0\|_{L^2}\|\nabla^2c_0\|_{L^2} C_{1,\phi}\mathcal{E}_0,
\end{aligned}
\end{equation}
where \eqref{apriori} and \eqref{basic0} have been used.

To establish the estimate of $n$, one takes the inner product of $\eqref{eq1-1}_{1}$ with $\Delta n$ to obtain
\begin{equation}\label{ddtnableu3o11}
\begin{aligned}
&\frac{1}{2}\frac{d}{dt}\|\nabla n\|_{L^2}^2+\left(\frac{3}{4}-C_3 (S_{f,\chi}^2\|\nabla c\|_{L^3}^2+\|u\|_{L^3}^2)  \right)\|\Delta n\|_{L^2}^2\\
&~\leq C_3 (S_{f,\chi}^2+S_{f,\chi}^4) \|\nabla^2c\|_{L^2}^4\|\nabla n\|_{L^2}^2,
\end{aligned} 
\end{equation}
where $C_3>0$ is a generic constant, and we have used Young's inequality, 
\begin{equation}\nonumber
\begin{aligned}
C\|u\cdot \nabla n\|_{L^2}^2\leq C\|u\|_{L^{3}}^2\|\nabla n\|_{L^6}^2\leq C_3\|u\|_{L^3}^2\|\Delta n\|_{L^2}^2,
\end{aligned} 
\end{equation}
and
\begin{equation}\nonumber
\begin{aligned}
&C\|\nabla (\chi(c) n \nabla c)\|_{L^2}^2\\
&\quad\leq CS_{f,\chi}^2(\|n\|_{L^\infty}^2\|\nabla^2 c\|_{L^2}^2+\|\nabla n\|_{L^6}^2\|\nabla c\|_{L^3}^2+\|\nabla c\|_{L^6}^4\|n\|_{L^6}^2)\\
&\quad\leq CS_{f,\chi}^2(\|\nabla n\|_{L^2}\|\nabla^2n\|_{L^{2}}\|\nabla^2 c\|_{L^2}^2+C\|\nabla^2n\|_{L^2}^2\|\nabla c\|_{L^3}^2+
\|\nabla^2 c\|_{L^2}^4\|\nabla n\|_{L^2}^2)\\
&\quad\leq \left(\frac{1}{4}+C_3 S_{f,\chi}^2\|\nabla c\|_{L^3}^2\right)\|\Delta n\|_{L^2}^2+C_3  (S_{f,\chi}^2+S_{f,\chi}^4) \|\nabla^2 c\|_{L^2}^4\|\nabla n\|_{L^2}^2.
\end{aligned} 
\end{equation}
We recall \eqref{apriori} and let $M_0$ be such that
\begin{equation}\label{M2}
\begin{aligned}
&M_0\leq \frac{1}{4C_3}.
\end{aligned}
\end{equation}
Thus, it follows from \eqref{basic0}, \eqref{nablace} and  \eqref{ddtnableu3o11} that
\begin{equation}\label{nablan}
\begin{aligned}
&\sup_{\tau\in(0,t]}\|\nabla n\|_{L^2}^2+\int_{0}^{t}\|\nabla^2 n\|_{L^2}^2\,d\tau\\
&\quad\leq \|\nabla n_0\|_{L^2}^2 +2C_3 (S_{f,\chi}^2+S_{f,\chi}^4 ) \sup_{\tau\in(0,t]}\|\nabla c\|_{L^2}^4 \int_0^t \|\nabla n\|_{L^2}^2\,d\tau\\
&\quad\leq \|\nabla n_0\|_{L^2}^2+2C_3(S_{f,\chi}^2+S_{f,\chi}^4 )(\|\nabla c_0\|_{L^2}^2+2C_2M_0 \|c_0\|_{L^2}\|\nabla^2c_0\|_{L^2} C_{1,\phi}\mathcal{E}_0)^2C_{1,\phi}\mathcal{E}_0.
\end{aligned} 
\end{equation}

Finally, we are ready to establish the $L^2$-estimates of $\nabla u$. Taking the inner product of $\eqref{eq1-1}_{3}$ with $\Delta u$, we have
\begin{equation}\label{Lambdau0}
\begin{aligned}
\frac{1}{2}\frac{d}{dt}\|\nabla u\|_{L^2}^2+\|\Lambda^{\alpha} \nabla u\|_{L^2}^2=\int_{\mathbb{R}^3} \nabla(n\nabla\phi):\nabla u\,dx-\int_{\mathbb{R}^3} \nabla(u\cdot\nabla u): \nabla u\,dx.
\end{aligned}
\end{equation}
We handle the first term on the right-hand side of \eqref{Lambdau0} in two cases. In the case $1\leq \alpha<\frac{5}{4}$, one deduces from Hardy's inequality \eqref{hardy} and the Gagliardo-Nirenberg inequality \eqref{G-GN} that
\begin{equation}\nonumber
\begin{aligned}
\left|\int_{\mathbb{R}^3} \nabla(n\nabla\phi): \nabla u\,dx\right|&=-\int_{\mathbb{R}^3} n\nabla\phi \cdot \Delta u\,dx\\
&\leq \||x|\nabla\phi\|_{L^{\infty}}\Big{\|}\frac{n}{|x|}\Big{\|}_{L^2}\|\Delta u\|_{L^2}\\
&\leq C\||x|\nabla\phi\|_{L^{\infty}}\|\nabla n\|_{L^2}\|u\|_{\dot{H}^{\alpha}}^{\alpha-1}\|u\|_{\dot{H}^{1+\alpha}}^{2-\alpha}.
\end{aligned}
\end{equation}
In the case $\frac{3}{4}<\alpha<1$, one also has
\begin{equation}\nonumber
\begin{aligned}
&\left|\int_{\mathbb{R}^3} \nabla(n\nabla\phi): \nabla u\,dx\right|\\
&\quad\leq \Big(\|\nabla n\|_{L^2}\|\nabla\phi\|_{L^{\infty}}+\Big\|\frac{n}{|x|}\Big\|_{L^2}\||x|\nabla\phi\|_{L^{\infty}}\Big)\|\nabla u\|_{L^2}\\
&\quad \leq C\|(1+|x|)\nabla\phi\|_{L^{\infty}}\|\nabla n\|_{L^2}\|u\|_{\dot{H}^{\alpha}}^{\alpha}\|u\|_{\dot{H}^{1+\alpha}}^{1-\alpha}.
\end{aligned}
\end{equation}
Hence, for all $\frac{3}{4}<\alpha<\frac{5}{4}$, we gain
\begin{equation}\label{111d}
\begin{aligned}
\left|\int_{\mathbb{R}^3} \nabla(n\nabla\phi): \nabla u\,dx\right|\leq \frac{1}{4}\|\Lambda^{\alpha} \nabla u\|_{L^2}^2+C\|(1+|x|)\nabla\phi\|_{L^{\infty}}^2\|\nabla n\|_{L^2}^2+C\|u\|_{\dot{H}^{\alpha}}^2.
\end{aligned}
\end{equation}
Now the second term on the right-hand side of \eqref{Lambdau0} is analyzed as follows. Using the duality of $\dot{H}^{\sigma}$ and $\dot{H}^{-\sigma}$ for any $\sigma\in\mathbb{R}$, we have
\begin{equation}\nonumber
\begin{aligned}
\Big| \int_{\mathbb{R}^3} \Delta u \cdot (u \cdot \nabla) u \,dx\Big|&\leq \|\Delta u\|_{\dot{H}^{\alpha-1}}\|u\cdot\nabla u\|_{\dot{H}^{1-\alpha}}\leq C\|\Lambda^{1+\alpha}u\|_{L^2}\|u\cdot\nabla u\|_{\dot{H}^{1-\alpha}}.
\end{aligned}	
\end{equation}
One deduces from $\nabla\cdot u=0$, the product law \eqref{product} and the interpolation inequality \eqref{G-GN} that
\begin{equation}\nonumber
\begin{aligned}
\|u\cdot\nabla u\|_{\dot{H}^{1-\alpha}}=\|\div(u\otimes u)\|_{\dot{H}^{1-\alpha}}\leq \|u\otimes u\|_{\dot{H}^{2-\alpha}}\leq C\|u\|_{\dot{H}^{\frac{7}{4}-\frac{\alpha}{2}}}^2\leq C\|u\|_{\dot{H}^1}^{3-\frac{3}{2\alpha}} \|u\|_{\dot{H}^{1+\alpha}}^{\frac{3}{2\alpha}-1}.
\end{aligned}
\end{equation}
Here we note that $\frac{3}{2\alpha}<2$ due to $\alpha>\frac{3}{4}$. It thus follows that
\begin{equation}\label{111d1}
\begin{aligned}
\left|\int_{\mathbb{R}^3} \nabla(u\cdot\nabla u): \nabla u\,dx\right|&\leq \|\Delta u\|_{\dot{H}^{\alpha-1}}\|u\cdot\nabla u\|_{\dot{H}^{1-\alpha}}\\
&\leq C\|u\|_{\dot{H}^1}^{3-\frac{3}{2\alpha}} \|u\|_{\dot{H}^{1+\alpha}}^{\frac{3}{2\alpha}}\\
&\leq \frac{1}{4} \|\Lambda^{\alpha} \nabla u\|_{L^2}^2+C\|\nabla u\|_{L^2}^{\frac{4\alpha}{4\alpha-3}}\|\nabla u\|_{L^2}^2.
\end{aligned}
\end{equation}
Putting \eqref{111d} and \eqref{111d1} into \eqref{Lambdau0} gives rise to
\begin{equation}\nonumber
\begin{aligned}
&\frac{d}{dt}\|\nabla u\|_{L^2}^2+\|\Lambda^{\alpha} \nabla u\|_{L^2}^2\leq C\|(1+|x|)\nabla\phi\|_{L^{\infty}}^2\|\nabla n\|_{L^2}^2+C\|u\|_{\dot{H}^{\alpha}}^2+C\|\nabla u\|_{L^2}^{\frac{4\alpha}{4\alpha-3}+2}.
\end{aligned}
\end{equation}
Integrating the above inequality over $[0,t]$, we infer that there exists a universal constant $C_4>0$ such that
\begin{equation}\nonumber
\begin{aligned}
&\sup_{\tau\in(0,t]}\|\nabla u\|_{L^2}^2+\int_{0}^{t}\|\Lambda^{\alpha} \nabla u\|_{L^2}^2\,d\tau\\
&\quad \leq \|\nabla u_{0}\|_{L^2}^2+C_4\|(1+|x|)\nabla\phi\|_{L^{\infty}}^2\int_{0}^{t}\|\nabla n\|_{L^2}^2\,d\tau+\int_{0}^{t}\|u\|_{\dot{H}^{\alpha}}^2\,d\tau\\
&\quad+C_4\int_{0}^{t}\|u\|_{\dot{H}^1}^{\frac{4\alpha}{4\alpha-3}} \,d\tau \sup_{\tau\in(0,t]}\|\nabla u\|_{L^2}^2.
\end{aligned}
\end{equation}
This, as well as \eqref{apriori} and  \eqref{basic0}, implies that
\begin{equation}\label{nablauL2}
\begin{aligned}
&\frac{1}{2}\sup_{t\in(0,\tau]}\|\nabla u\|_{L^2}^2+2\int_{0}^{t}\|\Lambda^{\alpha} \nabla u\|_{L^2}^2\,d\tau\\
&~\leq \|\nabla u_0\|_{L^2}^2+C_4(1+\|(1+|x|)\nabla\phi\|_{L^{\infty}}^2)C_{1,\phi}\mathcal{E}_0,
\end{aligned}
\end{equation}
when $M_0$ in \eqref{apriori} satisfies
\begin{align}
M_0\leq \frac{1}{2C_4}.\label{M3}
\end{align}
Recall that $\mathcal{E}_0\leq 1$. Combining \eqref{nablace}, \eqref{nablan}, \eqref{nablauL2} and the fact that $\|(\nabla n_0,\nabla c_0,\nabla u_0)\|_{L^2}^2\leq C\mathcal{E}_0^{\frac{1}{2}}\mathcal{E}_2^{\frac{1}{2}}$ due to \eqref{G-GN}, we conclude \eqref{h1} and thus complete the proof of Lemma \ref{lemmahigher}.
\end{proof}

\begin{lemma}\label{lemmaH2}
Under the assumptions of Proposition \ref{propapriori}, if \eqref{apriori} holds with some constant $M_0>0$ and let $\mathcal{E}_0\leq 1$, then for all $t\in(0,T)$, we have
\begin{equation}\label{HH2}
\begin{aligned}
\sup_{\tau\in(0,t]}&\|(\nabla^2 c,\nabla^2 u)\|_{L^2}^2+\int_{0}^{t}\|(\nabla^3 c,\nabla^2\Lambda^{\alpha}u)\|_{L^2}^2\,d\tau\leq C(\mathcal{E}_2+\mathcal{C}_{3,\phi,f,\chi}(1+\mathcal{E}_2^{3})),
\end{aligned} 
\end{equation}
and
\begin{equation}\label{HH2n}
\begin{aligned}
\sup_{\tau\in(0,t]}&\|\nabla^2 n\|_{L^2}^2+\int_{0}^{t}\|\nabla^3n\|_{L^2}^2\,d\tau\leq Ce^{\mathcal{C}_{3,\phi,f,\chi}(1+\mathcal{E}_2^{8})}\mathcal{E}_2,
\end{aligned} 
\end{equation}
where $C$ is a generic constant, and $\mathcal{C}_{3,\phi,f,\chi}$ denotes some constant depending only on $S_{\chi,f}$, $\|(1+|x|^{1+\alpha})\nabla\phi\|_{L^{\infty}}$, $\|\nabla^2\phi\|_{L^{\infty}}$ and $\| |x|\nabla^3\phi\|_{L^{\infty}}$.
\end{lemma}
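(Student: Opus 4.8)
The strategy is to differentiate each equation of \eqref{eq1-1} by $\Delta$, run $L^2$-energy estimates for $\Delta c$, $\Delta u$ and $\Delta n$, and close them by Grönwall's inequality; I would carry out the three estimates in the order $c\to u\to n$, since the worst term in the $\nabla^2 n$-estimate, the chemotaxis term $\nabla\nabla\cdot(\chi(c)n\nabla c)$, requires the full $H^2$-control of $c$ to be already available. The organising principle — dictated by the requirement that all bounds be uniform in $t\in(0,T)$ and polynomial in $\mathcal{E}_2$ for $c,u$ (resp. exponential-of-polynomial for $n$) — is that in every Grönwall argument the coefficient multiplying the top-order quantity must be genuinely time-integrable: either the scaling-invariant quantity $\int_0^t\|\nabla u\|_{L^2}^{\frac{4\alpha}{4\alpha-3}}\,d\tau\le M_0$ from \eqref{apriori}, or one of the dissipation integrals $\int\|\nabla c\|_{L^2}^2$, $\int\|\nabla^2 c\|_{L^2}^2$, $\int\|\nabla n\|_{L^2}^2$, $\int\|\nabla^2 n\|_{L^2}^2$, $\int\|\Lambda^\alpha u\|_{L^2}^2$, $\int\|\Lambda^{1+\alpha}u\|_{L^2}^2$ already controlled by \eqref{ccL2} and Lemmas \ref{lemmabasic}--\ref{lemmahigher}. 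Quantities such as $\|u\|_{L^\infty}^2$, $\|\nabla u\|_{L^3}^2$ and $\|\nabla c\|_{L^3}^2$ that appear along the way will be interpolated (Lemma \ref{lemmaGN}) between $L^2$-energy and higher norms and then turned into finite $t$-uniform integrals by Hölder's inequality in time.

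For the $\nabla^2 c$-estimate I would test $\Delta\eqref{eq1-1}_{2}$ against $\Delta c$ and use the cancellation $\int_{\mathbb{R}^3}\Delta c\,(u\cdot\nabla\Delta c)\,dx=0$ from $\nabla\cdot u=0$; the transport remainder is bounded by $\tfrac14\|\nabla\Delta c\|_{L^2}^2+C(\|u\|_{L^3}^2+\|u\|_{L^\infty}^2)\|\Delta c\|_{L^2}^2$ and the reaction term by $\tfrac14\|\nabla\Delta c\|_{L^2}^2+C(1+M_0)\|\nabla n\|_{L^2}^2$, where the smallness $\|\nabla c\|_{L^3}\ll1$ from \eqref{apriori} is used to absorb the worst reaction contribution. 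Grönwall then gives the \emph{polynomial}-in-$\mathcal{E}_2$ bound \eqref{HH2} for $c$, because the exponent $\int(\|u\|_{L^3}^2+\|u\|_{L^\infty}^2)\,d\tau$ and the source $\int\|\nabla n\|_{L^2}^2\,d\tau$ are both uniformly finite. For the $\nabla^2 u$-estimate I would test $\Delta\eqref{eq1-1}_{3}$ against $\Delta u$: the convective term is rewritten by integration by parts and bounded, as in \eqref{eq3-20} but stopping one step earlier, by $\tfrac14\|\Lambda^{\alpha+2}u\|_{L^2}^2+C\|\nabla u\|_{L^2}^{\frac{4\alpha}{4\alpha-3}}\|\Delta u\|_{L^2}^2$ — with no spurious additive constant in front of $\|\Delta u\|_{L^2}^2$, which is essential for $t$-uniformity — while the forcing $\int_{\mathbb{R}^3}\nabla\Delta u:\nabla(n\nabla\phi)\,dx$ is transferred to $\|\Lambda^{2-\alpha}(n\nabla\phi)\|_{L^2}$, expanded derivative by derivative, and controlled via Hardy's inequality (Lemma \ref{lemmahardy}) with the weighted potential norms $\||x|\nabla\phi\|_{L^\infty}$, $\||x|\nabla^2\phi\|_{L^\infty}$, $\||x|\nabla^3\phi\|_{L^\infty}$, giving $\tfrac14\|\Lambda^{\alpha+2}u\|_{L^2}^2+C_\phi(\|\nabla n\|_{L^2}^2+\|\nabla^2 n\|_{L^2}^2)$. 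Since the Grönwall coefficient is now just $C\|\nabla u\|_{L^2}^{\frac{4\alpha}{4\alpha-3}}$ (integral $\le CM_0$) and the source is controlled by Lemmas \ref{lemmabasic}--\ref{lemmahigher}, \eqref{HH2} follows for $u$ as well.

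For $\nabla^2 n$ I would test $\Delta\eqref{eq1-1}_{1}$ against $\Delta n$, handle $\|\nabla(u\cdot\nabla n)\|_{L^2}$ by a Gagliardo--Nirenberg interpolation producing a small multiple of $\|\nabla\Delta n\|_{L^2}^2$ plus $(\|u\|_{L^3}^2+\|u\|_{L^\infty}^2)(\|\nabla^2 n\|_{L^2}^2+\|\nabla^2 u\|_{L^2}^2\|\nabla n\|_{H^1}^2)$-type terms, and expand $|\nabla\nabla\cdot(\chi(c)n\nabla c)|$ term by term as in \eqref{nabla2nc}; the decisive bound is $\|\nabla\nabla\cdot(\chi(c)n\nabla c)\|_{L^2}^2\lesssim S_{f,\chi}^2(1+\|c\|_{H^2}^3)\,\|\nabla c\|_{H^2}^2\,\|n\|_{H^2}^2$. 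Grönwall then yields $\sup_{\tau\in(0,t]}\|\nabla^2 n\|_{L^2}^2+\int_0^t\|\nabla^3 n\|_{L^2}^2\,d\tau\lesssim e^{\int_0^t(\cdots)\,d\tau}\big(\|\nabla^2 n_0\|_{L^2}^2+(\text{bounded source})\big)$, and feeding in the already-proven bound \eqref{HH2} — which makes both $\sup\|c\|_{H^2}^2$ and $\int\|\nabla c\|_{H^2}^2$ polynomial in $\mathcal{E}_2$ — one checks that the exponent is $\lesssim\mathcal{C}_{3,\phi,f,\chi}(1+\mathcal{E}_2^{8})$, which is exactly \eqref{HH2n}. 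The main obstacle is precisely this last estimate: the chemotaxis term is the only nonlinearity that the a priori smallness in \eqref{apriori} cannot tame, so the scheme succeeds only if (i) the $c$-estimate is done first and kept polynomial in $\mathcal{E}_2$, and (ii) the constants are tracked carefully enough that the unavoidable exponential growth in the $n$-estimate is merely an exponential of a polynomial in $\mathcal{E}_2$ — which, since the smallness $\mathcal{E}_0\le\delta_1$ enters only linearly through the source terms, still leaves the bootstrap in Proposition \ref{propapriori} closable.
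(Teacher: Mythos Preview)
Your overall architecture ($c\to u\to n$, Gr\"onwall, leveraging the scaling-invariant integral $\int_0^t\|\nabla u\|_{L^2}^{4\alpha/(4\alpha-3)}\,d\tau\le M_0$ for $u$, and accepting an exponential bound for $n$) matches the paper, and the $u$- and $n$-estimates are essentially as in the paper. The gap is in the $\nabla^2 c$-estimate.

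Your Gr\"onwall coefficient for $c$ is $\int_0^t(\|u\|_{L^3}^2+\|u\|_{L^\infty}^2)\,d\tau$ (or $\int_0^t\|\nabla u\|_{L^3}^2\,d\tau$, which is what actually arises after integration by parts). After interpolating between $\dot H^\alpha$ and $\dot H^{1+\alpha}$ and H\"older-in-time, this integral is indeed uniformly bounded, but the bound is $\lesssim (C_{1,\phi}\mathcal E_0)^{(2\alpha-1)/2}\bigl(\mathcal C_{2,\phi,f,\chi}(1+\mathcal E_2)\mathcal E_0^{1/2}\bigr)^{(3-2\alpha)/2}$, which under the sole hypothesis $\mathcal E_0\le 1$ still carries a factor $(1+\mathcal E_2)^{(3-2\alpha)/2}$. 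Hence Gr\"onwall produces $\exp\bigl(C(1+\mathcal E_2)^{(3-2\alpha)/2}\bigr)$, which is \emph{not} polynomial in $\mathcal E_2$, so you do not get \eqref{HH2} as stated. (The bootstrap in Proposition~\ref{propapriori} would still close, but with a much worse $\delta_1$, and this specific lemma would be false.)

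The paper avoids Gr\"onwall for $c$ altogether. The transport term is estimated as
\[
\|\nabla(u\cdot\nabla c)\|_{L^2}^2
\le \|\nabla u\|_{L^2}^2\|\nabla c\|_{L^\infty}^2+\|u\|_{L^6}^2\|\nabla^2 c\|_{L^3}^2
\le C\|\nabla u\|_{L^2}^2\|\nabla^2 c\|_{L^2}\|\nabla^3 c\|_{L^2}
\le \tfrac12\|\nabla\Delta c\|_{L^2}^2+C\|\nabla u\|_{L^2}^4\|\nabla^2 c\|_{L^2}^2,
\]
and then, after integrating the differential inequality in time \emph{directly}, the dangerous term is controlled by
\[
\int_0^t\|\nabla u\|_{L^2}^4\|\nabla^2 c\|_{L^2}^2\,d\tau
\le \sup_{\tau\in(0,t]}\|\nabla u\|_{L^2}^4\cdot\int_0^t\|\nabla^2 c\|_{L^2}^2\,d\tau,
\]
where \emph{both} factors are already bounded by Lemma~\ref{lemmahigher}: the first is $\bigl(\sup\|\nabla u\|_{L^2}^2\bigr)^2$ and the second is the $\dot H^1$-level dissipation integral of $c$. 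This is the point you are missing: the coefficient of $\|\nabla^2 c\|_{L^2}^2$ is not Gr\"onwalled but paired with the lower-order dissipation integral, which turns the estimate into a purely polynomial one in $\mathcal E_2$.
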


\begin{proof}
We first perform the $L^2$-estimates of $\nabla^2 c$. One deduces from $\eqref{eq1-1}_{2}$ that
\begin{equation}\nonumber
\begin{aligned}
\frac{1}{2} \frac{d}{dt}&\|\Delta c\| _{L^2 }^2+\|\nabla \Delta c\| _{L^2 }^2\\
&= \int_{\mathbb{R}^3}\nabla\Delta c \cdot\nabla(n f(c))\,dx+\int_{\mathbb{R}^3}\nabla \Delta c \cdot\nabla(u\cdot \nabla c)\,dx\\
&\leq \frac{1}{16}\|\nabla \Delta c\| _{L^2 }^2+C\| \nabla\cdot(n f(c))\|_{L^2}^2+C\|\nabla \cdot(u\cdot \nabla c)\|_{L^2}^2.
\end{aligned} 
\end{equation}
Here by $f(0)=0$ and \eqref{G-GN} one can verify that
\begin{equation}\nonumber
\begin{aligned}
\| \nabla\cdot(n f(c))\|_{L^2}^2&\leq C S_{f,\chi}^2\Big(\|c\|_{L^6}^2\|\nabla n\|_{L^3}^2+\|n\|_{L^{\infty}}^2\|\nabla c\|_{L^2}^2\Big)\\
&\leq C S_{f,\chi}^2 \Big( \|c\|_{L^6}^2\|\nabla n\|_{L^2}\|\Delta n\|_{L^2}+ \|\nabla n\|_{L^2}\|\nabla^2 n\|_{L^2} \|\nabla c\|_{L^2}^2\Big) \\
&\leq C S_{f,\chi}^2\|\Delta n\|_{L^2}^2+C S_{f,\chi}^2\|\nabla c\|_{L^2}^4 \|\nabla n\|_{L^2}^2,
\end{aligned} 
\end{equation}
and
\begin{equation}\nonumber
\begin{aligned}
\|\nabla \cdot(u\cdot \nabla c)\|_{L^2}^2&\leq \|\nabla u\|_{L^{2}}^2\|\nabla c\|_{L^{\infty}}^2+\|u\|_{L^{6}}^2\|\nabla^2c\|_{L^3}^2\\
&\leq C\|\nabla u\|_{L^2}^2\|\nabla^2 c\|_{L^2}\|\nabla^3c\|_{L^2} \\
&\leq \frac{1}{2}\|\nabla \Delta c\|_{L^2}^2+C\|\nabla u\|_{L^2}^4\|\nabla^2 c\|_{L^2}^2.
\end{aligned} 
\end{equation}
Here $S_{f,\chi}$ is given by \eqref{Sfchi}. Thus, it follows that
\begin{equation}\nonumber
\begin{aligned}
&\frac{d}{dt}\|\Delta c\| _{L^2 }^2+ \|\nabla \Delta c\| _{L^2 }^2\\
&\leq CS_{f,\chi}^2\|\Delta n\|_{L^2}^2+C S_{f,\chi}^2\|\nabla c\|_{L^2}^4 \|\nabla n\|_{L^2}^2+C\|\nabla u\|_{L^2}^4\|\nabla^2 c\|_{L^2}^2,
\end{aligned} 
\end{equation}
which, together with \eqref{basic0} and \eqref{h1}, leads to
\begin{equation}\label{nabla2c}
\begin{aligned}
\sup_{\tau\in(0,t]}\|\nabla^2c\|_{L^2}^2+\int_0^t \|\nabla^3c\|_{L^2}^2\, d\tau
&\leq \|\nabla^2c_0\|_{L^2}^2+C S_{f,\chi}^2\int_{0}^{t}\|\nabla^2 n\|_{L^2}^2\,d\tau\\
&\quad+C S_{f,\chi}^2\sup_{\tau\in(0,t]}\|\nabla c\|_{L^2}^4\int_{0}^{t}\|\nabla n\|_{L^2}^2\,d\tau\\
&\quad+C\sup_{\tau\in(0,t]}\|\nabla u\|_{L^2}^4\int_{0}^{t}\|\nabla^2 c\|_{L^2}^2\,d\tau\\
&\leq \|\nabla^2c_0\|_{L^2}^2+CS_{f,\chi}^2\mathcal{C}_{2,\phi,f,\chi}(1+\mathcal{E}_2 ) \mathcal{E}_0^{\frac{1}{2}}\\
&\quad+CS_{f,\chi}^2  \mathcal{C}_{2,\phi,f,\chi}^2 C_{1,\phi}(1+\mathcal{E}_2 )^2\mathcal{E}_0^2\\
&\quad+C\mathcal{C}_{2,\phi,f,\chi}^3 (1+\mathcal{E}_2 )^3 \mathcal{E}_0^{\frac{3}{2}}.
\end{aligned} 
\end{equation}

Then, applying $\Delta u$ to $\eqref{eq1-1}_3$ and then taking the inner product of the resulting equation by $\Delta u$, we have
\begin{equation}\nonumber
\begin{aligned}
\frac{1}{2}\frac{d}{dt}&\|\Delta  u\|_{L^2}^2+\|\Lambda^{\alpha} \Delta u\|_{L^2}^2\\
&= \int_{\mathbb{R}^3}\nabla \Delta u \cdot \nabla ((u \cdot \nabla) u)\,dx-\int_{\mathbb{R}^3}\Delta u \cdot \Delta(n\nabla\phi)\,dx.
\end{aligned}
\end{equation}
Similarly to \eqref{eq3-20}, one can show
\begin{equation}\nonumber
\begin{aligned}
&\left|\int_{\mathbb{R}^3}\nabla \Delta u \cdot \nabla ((u \cdot \nabla) u)\,dx\right|\\
	&\quad\leq 3 \|\nabla u\| _{L^2 }
	\|\nabla^2 u\| _{L^2 }^{2-\frac{3}{2\alpha}}
	\|\Lambda^{2+\alpha} u\| _{L^2 }^{\frac{3}{2\alpha}}\\
 &\quad\leq \frac{1}{8}\|\nabla^{2}\Lambda^{\alpha} u\|_{L^2}^2+ C\|\nabla u\|_{L^2}^{\frac{4\alpha}{4\alpha-3}} \|\nabla^2 u\|_{L^2}^2.
\end{aligned}
\end{equation}
Moreover, due to $\alpha<2<2+\alpha$, one deduces from \eqref{G-GN} and \eqref{hardy} that
\begin{equation}\nonumber
\begin{aligned}
&\left|\int_{\mathbb{R}^3} \Delta u \cdot \Delta(n\nabla\phi)\,dx\right|\\
&\quad\leq \|\Delta u\|_{L^2}\left(\|\nabla^2 n\|_{L^2}\|\nabla\phi\|_{L^{\infty}}+2\|\nabla n\|_{L^2}\|\nabla^2\phi\|_{L^{\infty}}+\Big\|\frac{n}{|x|}\Big\|_{L^{2}}||x|\nabla^3 \phi\|_{L^{\infty}}\right) \\
&\quad\leq \frac{1}{8}\|\Lambda^{\alpha} \Delta u\|_{L^2}^2+C\|\Lambda^{\alpha} u\|_{L^2}^2\\
&\quad\quad +C\|\nabla\phi\|_{L^{\infty}}^2\|\nabla^2 n\|_{L^2}^2+C(\|\nabla^2\phi\|_{L^{\infty}}^2+\||x|\nabla^3 \phi\|_{L^{\infty}}^2)\|\nabla n\|_{L^2}^2.
\end{aligned}
\end{equation}
Therefore, we get
\begin{equation}\nonumber
\begin{aligned}
\frac{d}{dt}&\|\Delta u\|_{L^2}^2+\|\Lambda^{\alpha}\Delta u\|_{L^2}^2\\
&\leq C \|\nabla u\|_{L^2}^{\frac{4\alpha}{4\alpha-3}} \|\nabla^2 u\|_{L^2}^2+C\|\Lambda^{\alpha} u\|_{L^2}^2\\
&\quad +C(\|\nabla^2\phi\|_{L^{\infty}}^2+\||x|\nabla^3 \phi\|_{L^{\infty}}^2)\|\nabla n\|_{L^2}^2+C\|\nabla\phi\|_{L^{\infty}}^2\|\nabla^2 n\|_{L^2}^2,
\end{aligned}
\end{equation}
which, together with Gr\"onwall's inequality, \eqref{basic0}, \eqref{h1} and \eqref{apriori} with $M_0$ satisfying 
\begin{align}
M_0\leq 1,\label{M4}
\end{align}
 gives rise to
\begin{equation}\label{LLambdau0}
\begin{aligned}
&\sup_{\tau\in(0,t]}\|\nabla^2 u\|_{L^2}^2+\int_{0}^{t}\|\nabla^2\Lambda^{\alpha}u\|_{L^2}^2\,d\tau\\
&\quad\leq Ce^{\int_{0}^{t}\|\nabla u\|_{L^2}^{\frac{4\alpha}{4\alpha-3}} \,d\tau}\Big( \|\nabla^2 u_0\|_{L^2}^2+\int_{0}^{t}\|\Lambda^{\alpha} u\|_{L^2}^2\,d\tau\\
&\quad\quad+(\|\nabla^2\phi\|_{L^{\infty}}^2+\||x|\nabla^3 \phi\|_{L^{\infty}}^2)\int_{0}^{t}\|\nabla n\|_{L^2}^2\,d\tau\\
&\quad\quad+\|\nabla\phi\|_{L^{\infty}}^2\int_{0}^{t}\|\nabla^2 n\|_{L^2}^2\,d\tau\Big)\\
&\quad\leq C\bigg(\|\nabla^2u_0\|_{L^2}^2+(1+\|\nabla^2\phi\|_{L^{\infty}}^2+\| |x|\nabla^3\phi\|_{L^{\infty}}^2)C_{1,\phi}\mathcal{E}_0\\
&\quad\quad+\|\nabla \phi\|_{L^{\infty}}^2 \mathcal{C}_{2,\phi,f,\chi} \Big(1+\mathcal{E}_2)\mathcal{E}_0^{\frac{1}{2}} \bigg).
\end{aligned}
\end{equation}
Due to  \eqref{nabla2c} and \eqref{LLambdau0}, \eqref{HH2} follows.

Finally, it suffices to have the higher estimates of $n$. By $\eqref{eq1-1}_1$, \eqref{nabla2nc} and Gagliardo-Nirenberg-Sobolev inequalities, one has
\begin{equation}\nonumber
\begin{aligned}
	&\frac{1}{2}\frac{d}{dt}
	\|\Delta n\| _{L^2 }^2+
	\frac{1}{2}\|\nabla \Delta n\| _{L^2 }^2\\
 &\leq\|\nabla( u \cdot \nabla n+ \nabla\cdot(\chi(c) n \nabla c))\|_{L^2}^2\\
 &\leq C\|\nabla u\|_{L^3}^2\|\nabla n\|_{L^6}^2+C\|u\|_{L^{\infty}}^2\|\nabla^2 n\|_{L^2}^2\\
 &\quad+C S_{f,\chi}^2\Big( \|n\|_{L^{\infty}}^2\|\nabla c\|_{L^6}^6+\|n\|_{L^6}^2\|\nabla^2 c\|_{L^6}^2\|\nabla c\|_{L^6}^2+\|\nabla c\|_{L^6}^4\|\nabla n\|_{L^6}^2\Big)\\
 &\quad+C S_{f,\chi}^2\Big( \|\nabla^2 n\|_{L^3}^2\|\nabla  c\|_{L^{6}}^2+\|\nabla n\|_{L^3}^2\|\nabla^2 c\|_{L^6}^2+\|n\|_{L^{\infty}}^2\|\nabla^3c\|_{L^2}^2\Big)\\
 &\leq \frac{1}{4} \|\nabla \Delta n\|_{L^2}^2+CS_{f,\chi}^4\| \nabla^2 c\|_{L^2}^4 \|\nabla^2 n\|_{L^2}^2+C\|\nabla u\|_{L^2}\|\nabla^2 u\|_{L^2} \|\nabla^2 n\|_{L^2}^2\\
 &\quad +C S_{f,\chi}^2\Big( \|\nabla n\|_{L^2} \|\nabla^2 n\|_{L^2}\|\nabla^2 c\|_{L^2}^6+\|\nabla n\|_{L^2}^2\|\nabla^2 c\|_{L^2}^2\|\nabla^3 c\|_{L^2}^2+\|\nabla^2 c\|_{L^2}^4\|\nabla^2 n\|_{L^2}^2\Big)\\
 &\quad+C S_{f,\chi}^2\|\nabla n\|_{L^2} \|\nabla^2 n\|_{L^2} \|\nabla^3 c\|_{L^2}^2.
\end{aligned}	
\end{equation}
Using Gr\"onwall's inequality, we end up with
\begin{equation}\nonumber
\begin{aligned}
&\sup_{\tau\in(0,t]}\|\nabla^2 n\| _{L^2 }^2+\frac{1}{2}\int_{0}^{t}\|\nabla^3 n\| _{L^2 }^2\,d\tau\\
&\quad\leq e^{CS_{f,\chi}^4\sup\limits_{\tau\in(0,t]}\|\nabla^2c\|_{L^2}^2\int_0^t\|\nabla^2c\|_{L^2}^2\,d\tau}\bigg(\|\nabla^2n_0\|_{L^2}^2+C\sup_{\tau\in(0,t]}\|\nabla u\|_{L^2}\sup_{\tau\in(0,t]}\|\nabla^2u\|_{L^2}\int_{0}^t\|\nabla^2n\|_{L^2}^2\,d\tau\\
&\quad+C S_{f,\chi}^2 \sup_{\tau\in(0,t]}\|\nabla^2 c\|_{L^2}^6 \left(\int_0^t \|\nabla n\|_{L^2}^2\,d\tau\right)^{\frac{1}{2}}\left(\int_0^t \|\nabla^2 n\|_{L^2}^2\,d\tau\right)^{\frac{1}{2}}\\
&\quad+C S_{f,\chi}^2 \sup_{\tau\in(0,t]}\|\nabla n\|_{L^2}^2 \sup_{\tau\in(0,t]}\|\nabla^2 c\|_{L^2}^2 \int_0^t \|\nabla^3 c\|_{L^2}^2\,d\tau\\
&\quad+CS_{f,\chi}^2\sup_{\tau\in(0,t]} \|\nabla^2c\|_{L^2}^4 \int_0^t \|\nabla^2 n\|_{L^2}^2\,d\tau\\
&\quad+CS_{f,\chi}^2 \sup_{\tau\in(0,t]}   \|\nabla n\|_{L^2}\left(\int_{0}^{t}\|\nabla^2 n\|_{L^2}^2\,dt\right)^{\frac{1}{2}}\int_{0}^{t}\|\nabla^3 c\|_{L^2}^2\,dt\bigg).
\end{aligned}	
\end{equation}
Combining the above estimate with \eqref{h1}, \eqref{nabla2c} and \eqref{LLambdau0}, we obtain the desired estimates of $n$ in \eqref{HH2n} and thus complete the proof of Lemma \ref{lemmaH2}.

\end{proof}

\vspace{3mm}

\noindent\underline{\it\textbf{Proof of Proposition \ref{propapriori}:}}~
According to Theorem \ref{Th1.1}, the Cauchy problem   \eqref{eq1-1}-\eqref{eq1-2} has a unique strong solution $(n,c,u)$ on $[0,T_*)$, where $T_*>0$ is a maximal existence time. Let $M_{0}$ be a generic constant given by \eqref{M0}, \eqref{M1}, \eqref{M2}, \eqref{M3} and \eqref{M4}.

By virtue of Lemmas \ref{lemmabasic}-\ref{lemmahigher} and the Gagliardo–Nirenberg inequality in Lemma \ref{lemmaGN}, there holds that
\begin{equation}\nonumber
\begin{aligned}
\|u\|_{L^3}^2&\leq C\|u\|_{L^2}\|\nabla u\|_{L^2}\leq CC_{1,\phi}^{\frac{1}{2}}C_{2,\phi,f,\chi}^{\frac{1}{2}} (1+\mathcal{E}_2^{\frac{1}{2}})\mathcal{E}_0^{\frac{3}{4}}\leq \frac{1}{6}M_0,
\end{aligned} 
\end{equation}
provided that we let
\begin{align}
\mathcal{E}_0\leq \delta_1:=\min\Big\{1, \Big(6 C C_{1,\phi}^{\frac{1}{2}}C_{2,\phi}^{\frac{1}{2}}(1+\mathcal{E}_2^{\frac{1}{2}})\Big)^{-\frac{4}{3}}M_0\Big\}.\label{constant0}
\end{align}
In accordance with Lemmas \ref{lemmabasic} and \ref{lemmaH2} as well as the Gagliardo-Nirenberg inequality, we also have
\begin{equation}\nonumber
\begin{aligned}
S_{f,\chi}^2\|\nabla c\|_{L^3}^2&\leq CS_{f,\chi}^2\|c\|_{L^2}^{\frac{1}{4}}\|\nabla^2 c\|_{L^2}^{\frac{3}{4}}\\
&\leq  C S_{f,\chi}^2 C_{1,\phi}^{\frac{1}{4}} \Big((\mathcal{E}_2+\mathcal{C}_{3,\phi,f,\chi}(1+\mathcal{E}_2^{3}))\Big)^{\frac{3}{4}}\mathcal{E}_0^{\frac{1}{4}}\leq \frac{1}{6}M_0,
\end{aligned} 
\end{equation}
as long as
\begin{align}
\mathcal{E}_0\leq \delta_2:=\min\left\{1,\Big(6C S_{f,\chi}^2 C_{1,\phi}^{\frac{1}{4}}\Big((\mathcal{E}_2+\mathcal{C}_{3,\phi,f,\chi}(1+\mathcal{E}_2^{3}))\Big)^{\frac{3}{4}}\Big)^{-4}M_0^{4}\right\}.\label{constant}
\end{align}

To enclose the condition \eqref{apriori}, it suffices to justify
$$
\int_{0}^{t}\|\nabla u\|_{L^2}^{\frac{4\alpha}{4\alpha-3}}\, d\tau\leq \frac{1}{6}M_0.
$$
In the case $1\leq \alpha<\frac{5}{4}$, we observe that $\frac{4\alpha}{4\alpha-3}-2\alpha=\frac{2\alpha(5-4\alpha)}{4\alpha-3}>0$. We thence take advantage of the interpolation inequality \eqref{G-GN} and combine \eqref{basic0} and \eqref{h1}  together to obtain
\begin{equation}\nonumber
\begin{aligned}
\int_{0}^{t}\|\nabla u\|_{L^2}^{\frac{4\alpha}{4\alpha-3}}\, d\tau&\leq  C\sup_{\tau\in(0, t)}\|\nabla u\|_{L^2}^{\frac{2\alpha(5-4\alpha)}{4\alpha-3}}\int_{0}^{t} \|\nabla u\|_{L^2}^{2\alpha}\,d\tau\\
&\leq  \sup_{\tau\in(0,t)}\|\nabla u\|_{L^2}^{\frac{2\alpha(5-4\alpha)}{4\alpha-3} } \sup_{\tau\in(0,t)}\|u\|_{L^2}^{2(\alpha-1)}\int_{0}^{t} \|\Lambda^{\alpha}u\|_{L^2}^{2}\,d\tau\\
&\leq C\Big( \mathcal{C}_{2,\phi,f,\chi} ( 1+\mathcal{E}_2) \Big)^{\frac{\alpha(5-4\alpha)}{4\alpha-3} } \Big(\mathcal{C}_{1,\phi}\mathcal{E}_0\Big)^{\alpha}\\
&\leq C\mathcal{C}_{1,\phi}^{\alpha}\mathcal{C}_{2,\phi,f,\chi}^{\frac{\alpha(5-4\alpha)}{4\alpha-3} } (1+\mathcal{E}_2)^{\frac{\alpha(5-4\alpha)}{4\alpha-3} } \mathcal{E}_0^{\alpha}\\
&\leq \frac{1}{6}M_0,
\end{aligned}
\end{equation}
if for $1\leq \alpha<\frac{5}{4}$ we take
\begin{equation}\label{constant3}
\begin{aligned}
&\mathcal{E}_0\leq \delta_3=\min\Big\{ 1, \Big( 6C\mathcal{C}_{1,\phi}^{\alpha}\mathcal{C}_{2,\phi,f,\chi}^{\frac{\alpha(5-4\alpha)}{4\alpha-3} } (1+\mathcal{E}_2)^{\frac{\alpha(5-4\alpha)}{4\alpha-3} } \Big)^{-\frac{1}{\alpha}}M_0 \Big\}.
\end{aligned}
\end{equation}
As for the case $\frac{3}{4}<\alpha<1$, one has $\frac{4\alpha}{4\alpha-3}-2(1+\alpha)=\frac{2(3+\alpha-4\alpha^2)}{4\alpha-3}>0$. We thus conclude that
\begin{equation}\nonumber
\begin{aligned}
\int_{0}^{t}\|\nabla u\|_{L^2}^{\frac{4\alpha}{4\alpha-3}}\,d\tau&\leq  C\sup_{\tau\in(0,t)}\|\nabla u\|_{L^2}^{\frac{2(3+\alpha-4\alpha^2)}{4\alpha-3}}\int_{0}^{t} \|\nabla u\|_{L^2}^{2(1+\alpha)}\,d\tau\\
&\leq C\sup_{\tau\in(0,t)}\|\nabla u\|_{L^2}^{\frac{2(3+\alpha-4\alpha^2)}{4\alpha-3}}\sup_{\tau\in(0,t)}\|u\|_{L^2}^{2\alpha}\int_{0}^{t} \|\Lambda^{1+\alpha} u\|_{L^2}^{2}\,d\tau\\
&\leq C \Big( \mathcal{C}_{2,\phi,f,\chi} (1+\mathcal{E}_2) \Big)^{\frac{(3+\alpha-4\alpha^2)}{4\alpha-3}+1}\Big(\mathcal{C}_{1,\phi}\mathcal{E}_0\Big)^{\alpha}\\
&\leq C \mathcal{C}_{1,\phi}^{\alpha}\mathcal{C}_{2,\phi,f,\chi}^{\frac{\alpha(5-4\alpha)}{4\alpha-3} } (1+\mathcal{E}_2)^{\frac{\alpha(5-4\alpha)}{4\alpha-3} } \mathcal{E}_0^{\alpha}\\
&\leq \frac{1}{6}M_0,
\end{aligned}
\end{equation}
when 
\begin{equation}
\begin{aligned}
&\mathcal{E}_0\leq \delta _3:=\min\Big\{ 1, \Big( 6 C \mathcal{C}_{1,\phi}^{\alpha}\mathcal{C}_{2,\phi,f,\chi}^{\frac{\alpha(5-4\alpha)}{4\alpha-3} } (1+\mathcal{E}_2)^{\frac{\alpha(5-4\alpha)}{4\alpha-3} } \Big)^{-\frac{1}{\alpha}}M_0 \Big\}.\label{constant4}
\end{aligned}
\end{equation}

Based on above estimates, we choose $\mathcal{E}_0\leq \delta_0:=\min\{\delta_1,\delta_2,\delta_3\}$. Thus, the estimate \eqref{apriori1} is true for all $[0,T_*)$, and therefore the bounds in Lemma \ref{lemmabasic}-\ref{lemmahigher} are valid. A standard continuity argument implies $T_*=\infty$ and that  $(n,c,u)$ is indeed a global strong solution to the Cauchy problem   \eqref{eq1-1}-\eqref{eq1-2} satisfying the uniform estimates \eqref{uniformglobal}. To complete the proof of Theorem \ref{thmglobal}, we will establish the decay estimates \eqref{decay} in the next subsection.

\subsection{Proof of Theorem \ref{thmglobal}: Time-decay estimates}\label{subsectiondecay}

This subsection is devoted to the time-decay part in Theorem \ref{thmglobal}. Let $\frac{3}{4}<\alpha<\frac{5}{4}$. Under the assumptions of Theorem \ref{thmglobal}, we consider the global solution $(n,c,u)$ to the Cauchy problem \eqref{eq1-1}-\eqref{eq1-2} given by Subsection \ref{subsectionglobal2}. 

Under the conditions $n_0,u_0\in L^1$, integrating $\eqref{eq1-1}_1$ and $\eqref{eq1-1}_2$ over $[0,t]\times\mathbb{R}^3$ and using \eqref{cinfty}, we obtain 
\begin{align}
\|n\|_{L^1}=\|n_0\|_{L^1},\quad\quad \|c\|_{L^1}=\|c_0\|_{L^1},\quad\quad t\geq 0.\label{ncL1}
\end{align}

Now we show the $L^p$-decay estimates of $n$ and $c$. For all $2\leq p<\infty$, we perform the $L^p$-energy estimates for $\eqref{eq1-1}_1$ as follows
\begin{equation}\nonumber
\begin{aligned}
\frac{d}{dt}&\| n\|_{L^p}^p+\frac{4(p-1)}{p}\|\nabla n^{\frac{p}{2}}\|_{L^2}^2\\
&\leq p(p-1)\int_{\mathbb{R}^3} \chi(c)\nabla c n^{p-1}|\nabla n| \, dx\\
&\leq 2(p-1) \sup_{0\leq c\leq \|c_0\|_{L^\infty}} |\chi(c)|  \|\nabla c\|_{L^3}\|n^{\frac{p}{2}}\|_{L^6}\|\nabla n^{\frac{p}{2}}\|_{L^2}\\
&\leq C  \|\nabla c\|_{L^3}\|\nabla n^{\frac{p}{2}}\|_{L^2}^2.
\end{aligned}
\end{equation}
Since $\|\nabla c\|_{L^3}$ is suitably small due to \eqref{basic0}, \eqref{HH2} and
\begin{align}
    \|\nabla c\|_{L^3}&\leq C\|c\|_{L^2}^{\frac{1}{2}}\|\nabla^2 c\|_{L^2}^{\frac{1}{2}}<<1,\label{nablacL3}
\end{align}
it follows that
\begin{equation}
\begin{aligned}
&\frac{d}{dt}\| n\|_{L^p}^p+\frac{2(p-1)}{p}\|\nabla n^{\frac{p}{2}}\|_{L^2}^2\leq 0.\label{dissnine}
\end{aligned}
\end{equation}
In view of \eqref{ncL1} and the Gagliardo–Nirenberg inequality \eqref{G-GN}, we have 
\begin{equation}\label{dissn}
\begin{aligned}
\| n\|_{L^p}^{p}= \|n^{\frac{p}{2}}\|_{L^{2}}^2\leq C\|n^{\frac{p}{2}}\|_{L^{\frac{2}{p}}}^{\frac{4}{3p-1}}  \|\nabla n^{\frac{p}{2}}\|_{L^2}^{\frac{6(p-1)}{3p-1}}\leq C\|n_{0}\|_{L^1}^{\frac{8}{p(3p-1)}}\|\nabla n^{\frac{p}{2}}\|_{L^2}^{\frac{6(p-1)}{3p-1}}.
\end{aligned}
\end{equation}
Putting \eqref{dissn} into \eqref{dissnine} leads to the differential inequality
\begin{equation}\nonumber
\begin{aligned}
&\frac{d}{dt}\|n\|_{L^p}^p+C\Big(\|n\|_{L^p}^{p}\Big)^{1+\frac{2}{3(p-1)}} \leq 0,
\end{aligned}
\end{equation}
from which we infer
\begin{equation}
\begin{aligned}
\| n\|_{L^p}\leq C(1+ t)^{-\frac{3}{2}(1-\frac{1}{p})}\|n_{0}\|_{L^1},\quad\quad 2\leq p<\infty.\label{ndecay1}
\end{aligned}
\end{equation}
In addition, using H\"older's inequality gives
\begin{equation*}
\begin{aligned}
\| n\|_{L^p}\leq \|n\|_{L^1}^{\frac{2}{p}-1} \|n\|_{L^2}^{2-\frac{2}{p}}\leq C(1+t)^{-\frac{3}{2}(1-\frac{1}{p})},\quad\quad 1< p<2.\label{ndecay2}
\end{aligned}
\end{equation*}
A similar argument yields
\begin{equation}
\begin{aligned}
\| c\|_{L^p}\leq C(1+t)^{-\frac{3}{2}(1-\frac{1}{p})},\quad\quad 1< p<\infty.\label{cdecay1}
\end{aligned}
\end{equation}
The details are omitted.

Next, we are going to establish the higher-order decay estimates of $n$ and $c$. Multiplying $\eqref{eq1-1}_1$ by $\Delta n$ and integrating it by parts yield
\begin{equation}\nonumber
\begin{aligned}
\frac{1}{2}\frac{d}{dt}\|\nabla n\|_{L^2}^2+\|\Delta n\|_{L^2}^2
\leq \|u\cdot\nabla n\|_{L^2}^2+\|\nabla\cdot (\chi(c)n\nabla c)\|_{L^2}^2+\frac{1}{2}\|\Delta n\|_{L^2}^2,
\end{aligned}
\end{equation}
where the nonlinear terms are analyzed as 
\begin{equation}\nonumber
\begin{aligned}
 \|u\cdot\nabla n\|_{L^2}^2\leq \|u\|_{L^3}^2\|\nabla n\|_{L^6}^2\leq C\|u\|_{L^3}^2\|\Delta n\|_{L^2}^2,
\end{aligned}
\end{equation}
and
\begin{equation}\nonumber
\begin{aligned}
\|\nabla\cdot (\chi(c)n\nabla c)\|_{L^2}^2&\leq C\|n\|_{L^6}^2\|\nabla c\|_{L^6}^4+C\|\nabla c\|_{L^3}^2\|\nabla n\|_{L^6}^2+C\|n\|_{L^{\infty}}^2\|\nabla^2c\|_{L^2}^2\\
&\leq C(\|n\|_{L^6}^2\|\nabla^2 c\|_{L^2}^2+\|n\|_{L^6}\|\nabla^2n\|_{L^2})\|\Delta c\|_{L^2}^2+C\|\nabla c\|_{L^3}^2 \|\Delta n\|_{L^2}^2.
\end{aligned}
\end{equation}
Hence, there holds that
\begin{equation}\label{ddtnablandecay}
\begin{aligned}
&\frac{d}{dt}\|\nabla n\|_{L^2}^2+(1-C\|(\nabla c,u)\|_{L^3}^2)\|\Delta n\|_{L^2}^2\\
&\leq C(\|n\|_{L^6}^2\|\nabla^2 c\|_{L^2}^2+\|n\|_{L^6}\|\nabla^2n\|_{L^2}) \|\Delta c\|_{L^2}^2.
\end{aligned}
\end{equation}
This requires an estimate of $c$ at the $\dot{H}^1$-regularity level. To achieve it, one deduces from $\eqref{eq1-1}_2$ that
\begin{equation}\nonumber
\begin{aligned}
\frac{1}{2}\frac{d}{dt}\|\nabla c\|_{L^2}^2+\|\Delta c\|_{L^2}^2
\leq \|u\cdot\nabla c\|_{L^2}^2-\int_{\mathbb{R}^3} n f(c) \Delta c \,  dx+\frac{1}{2}\|\Delta c\|_{L^2}^2.
\end{aligned}
\end{equation}
The first nonlinear term has been estimated in \eqref{mmm1d}. We recall that $f(c)=\tilde{f}(c)c$ with $\tilde{f}(c)=\int_{0}^{1}f'(\theta c)\,d \theta$ due to $f(0)=0$. Therefore, the second nonlinear term can be handled by
\begin{equation}\nonumber
\begin{aligned}
\left|\int n f(c) \Delta c\, dx\right|&\leq \left| \int_{\mathbb{R}^3} \tilde{f}(c)c\nabla n \cdot \nabla c\, dx\right|+\left|\int_{\mathbb{R}^3} n f'(c)\nabla c\cdot \nabla  c \, dx\right|\\
&\leq C\|\nabla n\|_{L^6}\|c\|_{L^{\frac{3}{2}}}\|\nabla c\|_{L^6}+C\|n\|_{L^{\frac{3}{2}}}\|\nabla c\|_{L^{6}}^2\\
&\leq C\|(n,c)\|_{L^{\frac{3}{2}}}\|\Delta c\|_{L^2}^2+C\|c\|_{L^{\frac{3}{2}}}\|\Delta n\|_{L^2}^2.
\end{aligned}
\end{equation}
It thus holds that
\begin{equation}\label{ddtnablacdecay}
\begin{aligned}
\frac{d}{dt}\|\nabla c\|_{L^2}^2+(1-C\|(n,c)\|_{L^{\frac{3}{2}}}-C\|u\|_{L^3}^2)\|\Delta c\|_{L^2}^2\leq C\|c\|_{L^{\frac{3}{2}}}\|\Delta n\|_{L^2}^2.
\end{aligned}
\end{equation}
Adding \eqref{ddtnablandecay} and \eqref{ddtnablacdecay} together, we get
\begin{equation}\nonumber
\begin{aligned}
&\frac{d}{dt}\|(\nabla n,\nabla c)\|_{L^2}^2\\
&\quad+\left(1-C\|(n,c)\|_{L^{\frac{3}{2}}}-C\|(\nabla c,u)\|_{L^3}^2-C\|n\|_{L^6}^2\|\nabla^2 c\|_{L^2}^2-C\|n\|_{L^6}\|\nabla^2n\|_{L^2}\right)\|(\Delta n,\Delta c)\|_{L^2}^2\leq 0.
\end{aligned}
\end{equation}
Recall that in Lemma \ref{lemmabasic}, the $L^2$-norm of $\|(n, c, u)\|_{L^2}^2$ can be bounded by $C_{1, \phi}\mathcal{E}_0$ which can be suitably small, while Lemma \ref{lemmaH2} implies that the norms $\|\nabla^2n\|_{L^2}$ and $\|\nabla^2c\|_{L^2}$ are uniformly bounded in time. Furthermore, we have \eqref{nablacL3} and
\begin{equation}\nonumber
\begin{aligned}
\|(n,c)\|_{L^{\frac{3}{2}}}&\leq C\|(n,c)\|_{L^1}^{\frac{1}{3}}\|(n,c)\|_{L^2}^{\frac{2}{3}}<<1,\\
\|n\|_{L^6}&\leq C\|n\|_{L^2}^{\frac{1}{2}} \|\nabla^2n\|_{L^2}^{\frac{1}{2}}<<1,\\
\|u\|_{L^3}&\leq C\|u\|_{L^2}^{\frac{3}{4}}\|\nabla^2 u\|_{L^2}^{\frac{1}{4}}<<1.
\end{aligned}
\end{equation}
Therefore,  we obtain the energy inequality
\begin{equation}\nonumber
\begin{aligned}
&\frac{d}{dt}\|(\nabla c,\nabla n)\|_{L^2}^2+\|(\Delta c,\Delta n)\|_{L^2}^2\leq 0.
\end{aligned}
\end{equation}
The Gagliardo–Nirenberg inequality \eqref{G-GN} and \eqref{ncL1} ensure that
\begin{equation}\nonumber
\begin{aligned}
\|(\nabla c,\nabla n)\|_{L^2}\leq C \|(c,n)\|_{L^1}^{\frac{2}{7}} \|(\Delta c,\Delta n)\|_{L^2}^{\frac{5}{7}}\leq C \|(\Delta c,\Delta n)\|_{L^2}^{\frac{5}{7}}.
\end{aligned}
\end{equation}
Thus, we get
\begin{equation}\nonumber
\begin{aligned}
&\frac{d}{dt}\|(\nabla c,\nabla n)\|_{L^2}^2+C\Big(\|(\nabla c,\nabla n)\|_{L^2}^2 \Big)^{1+\frac{2}{5}}\leq 0.
\end{aligned}
\end{equation}
Solving this differential inequality, we have the decay of $(\nabla c,\nabla n)$ as follows
\begin{equation}\nonumber
\begin{aligned}
\|(\nabla c,\nabla n)\|_{L^2}\leq C(1+t)^{-\frac{5}{4}}.\label{decaycnnabla}
\end{aligned}
\end{equation}

We finally deal with the estimates of $u$. Taking the inner product of $\eqref{eq1-1}_3$ with $\Lambda^{2\alpha}u$ leads to
\begin{equation}\label{ddtlambda}
\begin{aligned}
\frac{1}{2}\frac{d}{dt}\|\Lambda^{\alpha}u\|_{L^2}^2+\|\Lambda^{2\alpha}u\|_{L^2}^2
\leq \frac{1}{2}\|\Lambda^{2\alpha}u\|_{L^2}^2+\|u\cdot\nabla u\|_{L^2}^2+\|n\nabla\phi\|_{L^2}^2.
\end{aligned}
\end{equation}
Multiplying \eqref{ddtlambda} by $t$, we arrive at 
\begin{equation}\nonumber
\begin{aligned}
\frac{d}{dt}\Big(t\|\Lambda^{\alpha}u\|_{L^2}^2\Big)+t\|\Lambda^{2\alpha}u\|_{L^2}^2\leq t \|u\cdot\nabla u\|_{L^2}^2+t\|n\nabla\phi\|_{L^2}^2.
\end{aligned}
\end{equation}
By virtue of the product law \eqref{product} with $s_1=\frac{3}{2}-\alpha$, $s_{2}=\alpha$ and $\alpha<\frac{5}{2}-\alpha<1+\alpha$, it holds that
\begin{equation}\nonumber
\begin{aligned}
t \|u\cdot\nabla u\|_{L^2}^2\leq C\|u\|_{\dot{H}^{\frac{5}{2}-\alpha}}^2 t\|\Lambda^{\alpha}u\|_{L^2}^2\leq C\|u\|_{\dot{H}^{\alpha}\cap\dot{H}^{1+\alpha}}^2 t\|\Lambda^{\alpha}u\|_{L^2}^2.
\end{aligned}
\end{equation}
And Hardy's inequality in Lemma \ref{lemmahardy} guarantees that
\begin{equation}\nonumber
\begin{aligned}
t\|n\nabla\phi\|_{L^2}^2\leq \||x|\nabla\phi\|_{L^{\infty}}^2 t\Big\|\frac{n}{|x|}\Big\|_{L^2}^2\leq C t\|\nabla n\|_{L^2}^2\leq C(1+t)^{-\frac{3}{2}}.
\end{aligned}
\end{equation}
Hence, it follows that
\begin{equation}\nonumber
\begin{aligned}
\frac{d}{dt}\Big(t\|\Lambda^{\alpha}u\|_{L^2}^2\Big)+t\|\Lambda^{2\alpha}u\|_{L^2}^2\leq C\|u\|_{\dot{H}^{\alpha}\cap\dot{H}^{1+\alpha}}^2 t\|\Lambda^{\alpha}u\|_{L^2}^2+C(1+t)^{-\frac{3}{2}}.
\end{aligned}
\end{equation}
Employing Gr\"onwall's inequality yields
\begin{equation}\label{udecay1}
\begin{aligned}
&t\|\Lambda^{\alpha}u\|_{L^2}^2+\int_{0}^{t}\tau \|\Lambda^{2\alpha}u\|_{L^2}^2\,d\tau\leq Ce^{\int_{0}^{t}\|u\|_{\dot{H}^{\alpha}\cap\dot{H}^{1+\alpha}}^2 \,d\tau}\int_{0}^{t}(1+\tau)^{-\frac{3}{2}}d\tau\leq C.
\end{aligned}
\end{equation}
which, together with Sobolev's embedding theorem, gives
\begin{equation*}\label{udecay2}
\begin{aligned}
&\|u\|_{L^{\frac{6}{3-2\alpha}}}\leq \|\Lambda^{\alpha}u\|_{L^2}\leq C(1+t)^{-\frac{1}{2}}.
\end{aligned}
\end{equation*}
Furthermore, one also has
\begin{equation}\label{udecay3}
\begin{aligned}
&\|u\|_{L^p}\leq \|u\|_{L^2}^{1-\frac{3}{\alpha}(\frac{1}{2}-\frac{1}{p})} \|u\|_{L^{\frac{6}{3-2\alpha}}}^{\frac{3}{\alpha}(\frac{1}{2}-\frac{1}{p})}\leq C(1+t)^{-\frac{3}{2\alpha}(\frac{1}{2}-\frac{1}{p})},\quad 2<p<\frac{6}{3-2\alpha}.
\end{aligned}
\end{equation}
By \eqref{ndecay1}-\eqref{cdecay1} and \eqref{udecay1}-\eqref{udecay3}, we conclude \eqref{decay} and complete the proof of Theorem \ref{thmglobal}.

\section{Conclusion and extensions}\label{sectionextension}

In this work, we first study the mechanism of possible finite time blow-up of strong solutions to the Cauchy problem of the generalized chemotaxis-Navier-Stokes system in spatially three dimensions. Based on these criteria and some uniform a-priori estimates, we then prove some new global existence results. In particular, we establish uniform-in-time evolution and large-time behavior of global solutions for initial data with small $L^2$ energy and possibly large oscillations. We discuss some possible extensions and questions below.

\vspace{2mm}

\noindent
(1) {\emph{Fractional diffusion effect on the population density equation}}. Our method may be applied to the study of the following chemotaxis-Navier-Stokes system:
\begin{equation}\nonumber
	\begin{cases}
\partial_t n+u\cdot \nabla n=-(-\Delta)^{\beta} n-
\nabla \cdot (\chi(c)n \nabla c),\\
\partial_t c+u \cdot \nabla c=\Delta c-nf(c),\\
\partial_t u +u \cdot \nabla u+\nabla P=-(-\Delta)^\alpha u-n\nabla \phi,\\
\nabla \cdot u=0.
	\end{cases}
\end{equation}
Here the exponent $\beta$ will influence the blow-up mechanism and regularity estimates of the population density $n$. We expect that a result similar to Theorem \ref{Th1.3} holds without the smallness of the $L^{\infty}$-norm for $c_0$ in \eqref{c0small} when $\beta$ is suitably large.

\vspace{3mm}

\noindent
(2) {\emph{Global weak solutions}}. The global existence of weak solutions to \eqref{eq1-1}-\eqref{eq1-2} is well expected. However, the uniqueness of weak solutions is open, which is similar to the classical Navier-Stokes equations. It is possible to study the uniqueness and regularity issues assuming the quantities in \eqref{eq1-3} or \eqref{eq1-5} are bounded in a finite time-interval. Furthermore, it also would be interesting to study the large-time behavior of global werak solutions to the Cauchy problem. To this end, one may apply the Fourier spilitting method, cf. e.g., \cite{BS,Wiegner-JLMS_1987}

\vspace{3mm}

\noindent
(3) {\emph{Enhanced dissipation phenomena.}} When the solution is close to some non-trivial stationary state, for example, the Couette flow, the enhanced dissipation  phenomena have been studied in the recent works \cite{ZZZ-2021-JFA,He-2023-SIAM} and references therein. It would be interesting to adapt the methodology in \cite{ZZZ-2021-JFA,He-2023-SIAM} to be able to deal with the system \eqref{eq1-1} even in 2D. We also expect that this mechanism can relax the smallness of the $L^2$-norm for initial data in \eqref{smallness2} using the argument in the proof of Theorem \ref{thmglobal}.

\section{Appendix: Local well-posedness}\label{section5}

In this appendix, we prove Theorem \ref{Thlocal} concerning the local well-posedness to \eqref{eq1-1}-\eqref{eq1-2} with general initial data. Our work can also cover all the dissipation exponent $\alpha>\frac{1}{2}$. In particular, we extend the previous works~\cite{duan2010,cpx2023} on  the local well-posedness for the chemotaxis-Navier-Stokes system in the case $\alpha=1$, without assuming that the initial data is suitably small or higher order regularity of $c_0$.

\vspace{3mm}

\noindent\underline{\bf\emph{Proof of Theorem~\ref{Thlocal}}} We split the proof into four steps.

\begin{itemize}

\item \bf{Step 1: Construction of approximate sequence}
\end{itemize}

Set $(n^0,c^0,u^0,P^0)=(0,0,0,0)$. We consider the following iterative approximate system for $j\geq0$: 
\begin{equation}\label{NSn}
	\begin{cases}
\partial_t n^{j+1}-\Delta n^{j+1}=-u^j\cdot \nabla n^{j}-
\nabla \cdot (\chi(c^j)n^{j} \nabla c^{j}),&\\
\partial_t c^{j+1}-\Delta c^{j+1}=-u^j\cdot \nabla c^{j}-n^{j}f(c^j),&\\
\partial_t u^{j+1}+(-\Delta)^\alpha u^{j+1}+\nabla P^{j+1}=-u^{j} \cdot \nabla u^{j}-n^j\nabla \phi,&\\
\nabla \cdot u^{j+1}=0
	\end{cases}
\end{equation}
with
\[(n^{j+1}(x,0),c^{j+1}(x,0),u^{j+1}(x,0))=(n_0(x),c_0(x),u_0(x)).
\]

Notice that the system~\eqref{NSn} is linear with respect to $(n^{j+1}, c^{j+1}, u^{j+1})$, so the existence and uniqueness of solutions are evident. Hence, by induction, we will verify  uniform regularity estimates of the  sequence $\{(n^j,c^j,u^j, P^j)\}_{j=0}^\infty$ in $t\in(0,T_0]$ for a short time $T_0$  determined in Steps 2 and 3.

\vspace{2mm}

 \begin{itemize}

\item \bf{Step 2: Uniform estimates.}
\end{itemize}

To begin with, we first claim 
that there exists a time $T_0>0$ such that for all $j\geq0$,
\begin{equation}\label{mmrn0}
\begin{aligned}
\sup\limits_{t\in(0,T_0]}\|c^j\|_{L^{\infty}}\leq  \|c_0\|_{L^{\infty}},
\end{aligned}
\end{equation}
and
\begin{equation}\label{mmrn}
\begin{aligned}&\sup\limits_{t\in(0,T_0]}\|(n^j, c^j,u^j)\|_{H^2}^2+ \int_0^{T_0}\|(\nabla n^{j},\nabla c^j,\Lambda^{\alpha}u^j)\|_{H^2}^2\,dt\leq 2\|(n_0, c_0,u_0)\|_{H^2}^2.
\end{aligned}
\end{equation}
In addition, we mention that the main difficulty lies in analyzing the high order nonlinear term 
$$
\nabla\cdot(\chi(c^j)n^j\nabla c^j),
$$
which requires the smallness of $L^2(0,T_0;H^2)$-norm of $\nabla c^j$. To achieve it, we let $c_{L}$ be the solution to the linear problem
\begin{align}
    &\partial_{t}c_{L}-\Delta c_{L}=0,\quad\quad c_{L}|_{t=0}=c_0.\nonumber
\end{align}
It is easy to verify that
\begin{equation}\nonumber
\begin{aligned}
&\sup_{t\in(0,T_0]}\|c_{L}\|_{H^2}^2+2\int_0^{T_0} \|\nabla c_{L}\|_{H^2}^2\,dt\leq \|c_0\|_{H^2}^2.
\end{aligned}
\end{equation}
This implies that $\mathop{\lim}\limits_{t\rightarrow0} \|\nabla c_{L}\|_{L^2(0,T;H^2)}=0$. Thus, for some small constant $R_0$ to be chosen later, one can find a time $T_{R_0}>0$ such that
\begin{equation}\label{mmrn1}
\begin{aligned}
\int_0^{T_{R_0}} \|\nabla c_{L}\|_{H^2}^2\,dt\leq R_0.
\end{aligned}
\end{equation}
We thus further claim that for all $j\geq0$ and $T_0\in (0,T_{R_0})$, the following uniform estimate for the error $c^{j}-c_{L}$ holds:
\begin{equation}\label{mmrn2}
\begin{aligned}
\int_0^{T_0} \|\nabla (c^{j}-c_{L})\|_{H^2}^2\,dt\leq R_0.
\end{aligned}
\end{equation}
In view of \eqref{mmrn1}-\eqref{mmrn2}, we have
\begin{equation}\label{mmrn3}
\begin{aligned}
\int_0^{T_0} \|\nabla c^{j} \|_{H^2}^2\,dt\leq 2R_0.
\end{aligned}
\end{equation}

Our goal is to find some small $T_0$ and $R_0$  such that \eqref{mmrn0}-\eqref{mmrn} and \eqref{mmrn2} indeed hold for all $j\geq 0$.  It is clear for $j=0,1$. We assume, by induction, that \eqref{mmrn0}-\eqref{mmrn} and \eqref{mmrn2}  hold for any fixed $j\geq 1$.

We now justify \eqref{mmrn0}-\eqref{mmrn} and \eqref{mmrn2}  for $j+1$.  First, one can achieve \eqref{mmrn0} using the standard maximum principle for the transport-diffusion $\eqref{NSn}_2$. By the standard energy estimates for the uniformly parabolic equations $\eqref{NSn}_1$- $\eqref{NSn}_3$, we obtain 
\begin{equation}\label{mrn}
\begin{aligned}
\frac{1}{2}\frac{d}{dt}&\|(n^{j+1}, c^{j+1},u^{j+1})\|_{H^2}^2+\|(\nabla n^{j+1},\nabla c^{j+1},\Lambda^{\alpha} u^{j+1})\|_{H^2}^2\\
&\leq \underbrace{\int_{\mathbb{R}^3}\sum\limits_{0\leq |\zeta|\leq 2}\partial^{\zeta}(-u^j\cdot \nabla n^{j}-
\nabla\cdot(\chi(c^j)n^{j} \nabla c^{j}))\partial^{\zeta}n^{j+1}\,dx}_{\mathcal{A}(t)}\\
&\quad+\underbrace{\int_{\mathbb{R}^3}\sum\limits_{0\leq |\beta|\leq 2}\partial^{\beta}(-u^j\cdot \nabla c^{j}-n^{j}f(c^j))\partial^{\beta} c^{j+1}\, dx}_{\mathcal{B}(t)}\\
&\quad+\underbrace{\int_{\mathbb{R}^3}\sum\limits_{0\leq |\gamma|\leq 2}\partial^{\gamma}(-u^{j} \cdot \nabla u^{j}-n^j\nabla \phi)\cdot \partial^{\gamma} u^{j+1}\,dx}_{\mathcal{C}(t)},
\end{aligned}
\end{equation}
where $\zeta=(\zeta_1,\zeta_2,\zeta_3)$,  $\beta=(\beta_1,\beta_2,\beta_3)$ and $\gamma=(\gamma_1,\gamma_2,\gamma_3)$. 

Lert $C$ denote some constant only depending on $\|c_0\|_{L^{\infty}}$, $f$, $\chi$ and $\nabla\phi$. Using \eqref{mmrn0} and the Gagliardo–Nirenberg-Sobolev inequalities, we obtain 
\begin{equation}\label{a}
\begin{aligned}
\mathcal{A}(t)\leq& \frac{1}{2}\|\nabla n^{j+1}\|_{H^2}^2+C\|n^{j+1}\|_{L^2}^2+C\|u^j\cdot\nabla n^j\|_{H^1}^2+C\|\nabla\cdot(\chi(c^j)n^j\nabla c^j)\|_{H^1}^2\\
\leq & \frac{1}{2}\|\nabla n^{j+1}\|_{H^2}^2+C\|u^j\|_{H^2}^2\|\nabla n^j\|_{H^1}^2+C(1+\|c^j\|_{H^2}^2)\|n^j\|_{H^2}^2 \|\nabla c^j\|_{H^2}^2,
\end{aligned}
\end{equation}
and
\begin{equation}\label{b}
\begin{aligned}
    \mathcal{B}(t)\leq& \frac{1}{2}\|\nabla c^{j+1}\|_{H^2}^2+C\|c^{j+1}\|_{L^2}^2+C\|u^j\cdot \nabla c^{j}\|_{H^1}^2+C\|n^jf(c^j)\|_{H^1}^2\\
    \leq & \frac{1}{2}\|\nabla c^{j+1}\|_{H^2}^2+\|u^j\|_{H^2}^2\|\nabla c^{j}\|_{H^2}^2+C\|n^j\|_{H^2}^2\|c^j\|_{H^1}^2.
\end{aligned}
\end{equation}
With regards to $\mathcal{C}(t)$, we consider two cases $\alpha\geq 1$ and  $\frac{1}{2}<\alpha<1$ separately. For $\alpha\geq 1$, it is direct to get
\begin{equation}\label{c1}
\begin{aligned}
    \mathcal{C}(t)\leq& \frac{1}{2}\|u^{j+1}\|_{H^2}^2+C\|u^j\cdot \nabla u^j\|_{H^1}^2+C\|n^j\nabla \phi\|_{H^2}^2+\frac{1}{2} \|\Lambda^{\alpha} u^{j+1}\|_{\dot{H}^2}^2\\
    \leq& \frac{1}{2}\|u^{j+1}\|_{H^2}^2+\frac{1}{2} \|\Lambda^{\alpha} u^{j+1}\|_{\dot{H}^2}^2+C\|u^j\|_{H^2}^4+C\|n^j\|_{H^2}^2.
\end{aligned}
\end{equation}
In the case $\frac{1}{2}<\alpha<1$, it follows from \eqref{G-GN} and \eqref{product0} that
\begin{equation}\label{c2}
    \begin{aligned}
  \mathcal{C}&(t)\leq \frac{1}{2}\|\Lambda^\alpha u^{j+1}\|_{\dot{H}^2}^2+\frac{1}{2}\|u^{j+1}\|_{H^2}^2+C\|u^j\cdot \nabla u^j\|_{H^1}^2+C\|n^j\nabla \phi\|_{H^2}^2\\
  &\quad +C\|u^j\otimes u^j\|_{\dot{H}^{3-\alpha}}^2\\
  &\leq \frac{1}{2}\|\Lambda^\alpha u^{j+1}\|_{\dot{H}^2}^2+\frac{1}{2}\|u^{j+1}\|_{H^2}^2+C\|u^j\|_{H^2}^4+C\|n^j\|_{H^2}^2\\
  &\quad+C\|u^j\|_{H^2}^2\|u^j\|_{\dot{H}^2}^{2\theta}\|\Lambda^{\alpha} u^j\|_{\dot{H}^2}^{2(1-\theta)},
    \end{aligned}
\end{equation}
where we used the fact  that, due to \eqref{G-GN} and \eqref{product0},
\begin{equation*}
\begin{aligned}
\|u^j\otimes u^j\|_{\dot{H}^{3-\alpha}}\leq& 2\|u^j\|_{L^\infty}\|u^j\|_{\dot{H}^{3-\alpha}}\leq C\|u^j\|_{H^2}\|u^j\|_{\dot{H}^{2}}^{\theta}\|\Lambda^{\alpha}u^j\|_{\dot{H}^{2}}^{1-\theta}
\end{aligned}
\end{equation*}
with $\theta=2-\frac{1}{\alpha}\in (0,1)$. 

Then, we substitute \eqref{a}-\eqref{c2} into \eqref{mrn}, and get 
\begin{equation*}
\begin{aligned}
\frac{d}{dt}&\|(n^{j+1},c^{j+1},u^{j+1}) \|_{H^2}^2+\|(\nabla n^{j+1},\nabla c^{j+1}, \Lambda^{\alpha} u^{j+1})\|_{H^2}^2\\
\leq & C\|(n^{j+1},c^{j+1},u^{j+1}) \|_{H^2}^2+C\|u^{j}\|_{H^2}^{2(1+\theta)}\|\Lambda^{\alpha} u^j\|_{\dot{H}^2}^{2(1-\theta)}\\
&+C\left(\|n^j\|_{H^2}^2+\|u^j\|_{H^2}^4+\|u^j\|_{H^2}^2\|n^j\|_{H^2}^2+(1+\|c^j\|_{H^2}^2)\|n^j\|_{H^2}^2 \|\nabla c_j\|_{H^2}^2\right),
\end{aligned}
\end{equation*}
for some $\widetilde{\theta}\in(0,1)$. Let $\mathcal{X}_0:=\|(n_0,c_0,u_0)\|_{H^2}^2$.  By virtue of Gr\"{o}nwall's inequality, it thence holds that
\begin{equation*}
\begin{aligned}
   \sup\limits_{t\in(0, T_0]}&\|(n^{j+1},c^{j+1},u^{j+1}) \|_{H^2}^2+\int_0^{T_0}\|(\nabla n^{j+1},\nabla c^{j+1}, \Lambda^{\alpha} u^{j+1})\|_{H^2}^2\, dt \\
\leq& e^{CT_0}\bigg(\mathcal{X}_0+C\sup_{t\in(0,T_0]}\|u^{j}\|_{H^2}^{2(1+\widetilde{\theta})}\|\Lambda^{\alpha}u^j\|_{L^2(0,T_0;H^2)}^{2(1-\widetilde{\theta})}T_0^{\theta}\\
&\quad+C(1+\sup_{t\in(0,T_0]}\|u^j\|_{H^2}^2)\sup_{t\in(0,T_0]}\|(n^j,u^j)\|_{H^2}^2T_0\\
&\quad+C(1+\sup_{t\in(0,T_0]}\|c^j\|_{H^2}^2)\sup_{t\in(0,T_0]}\|n^j\|_{H^2}^2\|\nabla c^j\|_{L^2(0,T_0;H^2)}^2\bigg),
\end{aligned}
\end{equation*}
which, together with \eqref{mmrn} and \eqref{mmrn3}, gives rise to
\begin{equation*}
\begin{aligned}
   \sup\limits_{t\in(0, T_0]}&\|(n^{j+1},c^{j+1},u^{j+1}) \|_{H^2}^2+\int_0^{T_0}\|(\nabla n^{j+1},\nabla c^{j+1}, \Lambda^{\alpha} u^{j+1})\|_{H^2}^2\, dt \\
   &\leq e^{CT_0} \bigg( \mathcal{X}_0+C\mathcal{X}_0^2 T_0^{\widetilde{\theta}}+C(1+\mathcal{X}_0)\mathcal{X}_0 T_0+(1+\mathcal{X}_0)\mathcal{X}_0 R_0\bigg)\\
   &\leq 2\|(n_0,c_0,u_0)\|_{H^2}^2,
\end{aligned}
\end{equation*}
provided that we first choose
$$
R_0:=\frac{1}{5(1+\mathcal{X}_0)\mathcal{X}_0},
$$
and then take
$$
T_0\leq T_1:=\min\left\{T_{R_0}, \log{\frac{5}{4}},  \frac{1}{(5C\mathcal{X}_0^{2})^{\frac{1}{\widetilde{\theta}}}},\frac{1}{5C\mathcal{X}_0(1+\mathcal{X}_0)}\right\}.
$$

In order to justify \eqref{mmrn2}, we consider the error equation
\begin{equation}\label{erroreq}
\begin{aligned}
&\partial_{t}(c^{j+1}-c_{L})-\Delta (c^{j+1}-c_{L})=-u^j\cdot \nabla c^j-n^j f(c^j),\quad\quad (c^{j+1}-c_{L})|_{t=0}=0.
\end{aligned}
\end{equation}
Performing $H^2$-energy estimates for \eqref{erroreq} and using \eqref{mmrn0}-\eqref{mmrn}, we can derive
\begin{equation}\nonumber
\begin{aligned}
&\sup_{t\in(0,T_0]}\|c^{j+1}-c_{L}\|_{H^2}^2+\int_{0}^{T_0}\|\nabla(c^{j+1}-c_{L})\|_{H^2}^2\,dt\\
&\leq \int_0^{T_0} \|u^j\cdot \nabla c^j+n^j f(c^j)\|_{H^1}^2\,dt\\
&\leq C \sup\limits_{t\in(0, T_0]}(\|u^j\|_{H^2}\|c^j\|_{H^2}+\|n^j\|_{H^2}\|c^j\|_{H^1})T_0\\
&\leq C \mathcal{X}_0 T_0\\
&\leq R_0,
\end{aligned}
\end{equation}
as long as we let
$$
T_0\leq T_2:= \frac{R_0}{C\mathcal{X}_0}.
$$
Hence, when $T_0\leq \min\{T_1,T_2\}$, we conclude that the uniform bounds \eqref{mmrn0}-\eqref{mmrn} and \eqref{mmrn3} hold true for any $j\geq0$.

In addition, taking $\div$ to $\eqref{NSn}_3$, we have 
$$
\Delta P^{j+1}=-\nabla u^j:\nabla u^{j}+\nabla\cdot(n^j\nabla \phi),
$$
from which we infer
\begin{equation}\label{pressure}
    \begin{aligned}
    \|\nabla P^{j+1}\|_{L^2(0,T_0;H^{2})}&\leq C\|\nabla u^j:\nabla u^{j}\|_{L^2(0,T_0;H^1)}+\|n^j\nabla \phi\|_{L^2(0,T_0;H^{2})}\\
    &\leq C\| \nabla u\|_{L^2(0,T_0;H^2)}\|\nabla u^j\|_{L^{\infty}(0,T;H^1)}+C\|n^j\|_{L^2(0,T_0;H^{2})}\\
    &\leq C(1+T_0)\|(n_0, c_0,u_0)\|_{H^2}^2.
    \end{aligned}
\end{equation}


\vspace{2mm}

\begin{itemize}
\item \bf{Step 3: Convergence and existence.}
\end{itemize}

In order to obtain the convergence of the approximate equations \eqref{NSn} to the original equations \eqref{eq1-1}, one needs to establish the strong compactness of the sequence $\{(n^j,c^j,u^j, P^j)\}_{j=0}^\infty$ in a suitable sense. To this end, we set 
\begin{equation}\nonumber
\begin{aligned}
W^j(t)&:=\|n^j-n^{j-1}\|_{L^2}^2+\|c^j-c^{j-1}\|_{H^1}^2+\| u^j-u^{j-1}\|_{L^2}^2,\\
X^j(t)&:=\|\nabla(n^j-n^{j-1})\|_{L^2}^2+\|\nabla(c^{j}-c^{j-1}) \|_{H^1}^2+\|\Lambda^\alpha(u^{j}-u^{j-1}) \|_{L^2}^2,\\
Y^j(t)&:=\|(n^j,c^j,u^j)\|_{H^2}^2.
\end{aligned}
\end{equation}
We take the difference between $j+1$ and $j$ for Eq.~\eqref{NSn}, and attain
\begin{equation}\nonumber
	\begin{cases}
\partial_t (n^{j+1}-n^j)-\Delta (n^{j+1}-n^j)=-u^j\cdot \nabla n^{j}+u^{j-1}\cdot \nabla n^{j-1}&\\
\quad\quad\quad\quad\quad\quad -
\nabla \cdot (\chi(c^j)n^{j} \nabla c^{j})+\nabla \cdot (\chi(c^{j-1})n^{j-1} \nabla c^{j-1}),&\\
\partial_t (c^{j+1}-c^j)-\Delta (c^{j+1}-c^j)=-u^j\cdot \nabla c^{j}-n^{j}f(c^j)+u^{j-1}\cdot \nabla c^{j-1}+n^{j-1}f(c^{j-1}),&\\
\partial_t(u^{j+1}-u^j)+(-\Delta)^\alpha (u^{j+1}-u^j)+\nabla (P^{j+1}-P^j)&\\
\quad\quad\quad\quad=-u^{j} \cdot \nabla u^{j}+u^{j-1} \cdot \nabla u^{j-1}-n^j\nabla \phi+n^{j-1}\nabla \phi,&\\
\nabla \cdot u^{j+1}=0.&\\	
	\end{cases}
\end{equation}
Similar to Step 2, we use standard energy estimates to obtain
\begin{equation}\label{salpha}
    \begin{aligned}
\frac{1}{2}\frac{d}{dt}W^{j+1}(t)+X^{j+1}(t)\leq& \frac{1}{2}W^{j+1}(t)+\frac{1}{2}X^{j+1}(t)
+CW^{j}(t)[Y^j(t)+Y^{j-1}(t)+1]\\
&+\|u^j\cdot\nabla u^j-u^{j-1}\cdot\nabla u^{j-1}\|_{\dot{H}^{-\alpha}}^2\textbf{1}_{\alpha\in(\frac{1}{2},\frac{3}{2})},
    \end{aligned}
\end{equation}
where 
\begin{equation*}
    \textbf{1}_{\alpha\in(\frac{1}{2},\frac{3}{2})}=\begin{cases}
        0,\quad \alpha\notin (\frac{1}{2},\frac{3}{2}),\\
        1,\quad \alpha\in (\frac{1}{2},\frac{3}{2}).
    \end{cases}
\end{equation*}
In \eqref{salpha}, the case of $\alpha\geq \frac{3}{2}$ can be directly controlled. However, one needs to handle the specific case of $\frac{1}{2}<\alpha<\frac{3}{2}$ due to the weaker dissipation. To this end, by means of  the product inequalities in Lemma~\ref{lemmaproduct}, we get 
\begin{equation*}
\begin{aligned}
&\|u^j\cdot\nabla u^j-u^{j-1}\cdot\nabla u^{j-1}\|_{\dot{H}^{-\alpha}}\\
&\leq \|(u^j-u^{j-1})\cdot\nabla u^j\|_{\dot{H}^{-\alpha}}+\|u^{j-1}\cdot\nabla( u^j-u^{j-1})\|_{\dot{H}^{-\alpha}}\\
&\leq \|u^i-u^{j-1}\|_{L^2}\|\nabla u^j\|_{\dot{H}^{\frac{3}{2}-\alpha}}+\|(u^j-u^{j-1})\|_{L^2}\|u^{j-1}\|_{\dot{H}^{\frac{5}{2}-\alpha}}\\
&\leq W^{j}(t)(Y^j(t)+Y^{j-1}(t)).
\end{aligned}
\end{equation*}
By means of the uniform estimates \eqref{mmrn0}-\eqref{mmrn} obtained in Step 2, we have 
\begin{equation}\label{step3}
\frac{d}{dt}W^{j+1}(t)\leq W^{j+1}(t)+C(1+\mathcal{X}_0) W^{j}(t),\quad\quad j\geq 1,
\end{equation}
where we recalled $\mathcal{X}_0:=\|(n_0,c_0,u_0)\|_{H^2}^2$. This, combined with Gr\"{o}nwall's inequality, yields 
\[
\sup\limits_{t\in(0,T_0]}W^{j+1}(t)\leq C(1+\mathcal{X}_0)e^{T_0}T_0\sup\limits_{t\in(0,T_0]}W^{j}(s).
\]
Therefore, we take
$$
T_0\leq \min\{T_1,T_2,T_3\}\quad\text{with}\quad T_3:=\min\left\{ \log 2, \frac{1}{4C(1+\mathcal{X}_0)}\right\}
$$
such that
\[
\sup\limits_{0\leq t \leq T}W^{j+1}(t)\leq\frac{1}{2}\sup\limits_{0\leq t \leq T}W^{j}(t),
\]
which means that there exists a limit $(n,c,u)$ such that as $j\rightarrow\infty$, $(n^j,c^j,u^j)$ converges to $(n,c,u)$ strongly in   in $L^{\infty}(0,T;L^2(\mathbb{R}^3))\times L^{\infty}(0,T;H^1(\mathbb{R}^3))\times L^{\infty}(0,T;L^2(\mathbb{R}^3))$. In addition, in light of \eqref{pressure}, there exists a limit $\nabla P$ such that , up to a subsequence,  $\nabla P^j$ converges to $\nabla P$ weakly in $L^2(0,T;H^2(\mathbb{R}^3))$ as $j\rightarrow \infty$. Thus, $(n,c,u)$ solves the system ~\eqref{eq1-1} in the sense of distributions on $[0,T_0]\times\mathbb{R}^3$. Thanks to the uniform estimates obtained in Step 2, Fatou's property implies 
\begin{equation}\nonumber
\begin{aligned}
\sup\limits_{t\in(0,T_0]}&\|(n,c,u)\|_{H^2} + \int_0^{T}\|(\nabla n,\nabla c,\Lambda^{\alpha} u\|_{H^2}^2\,dt\leq C\|(n_0,c_0,u_0)\|_{H^2}^2.
\end{aligned}
\end{equation}
Hence, $(n,c,u)$ is a strong solution to the Cauchy problem~\eqref{eq1-1}-\eqref{eq1-2}. Furthermore, by a standard argument one can show $\partial_{t}n,\partial_{t}c,\partial_{t}u\in L^2[0,T_0;H^1(\mathbb{R}^3))$. This together with Aubin-Lions Lemma implies that  $n,c,u\in C([0,T];H^2(\mathbb{R}^3))$.
\\
\vspace{2mm}

\begin{itemize}
\item \bf{Step 4: Uniqueness.}
\end{itemize}

It suffices to prove the uniqueness.  Let $(n^i,c^i,u^i)$, $i=1,2$ are two strong solutions to the Cauchy problem \eqref{eq1-1}-\eqref{eq1-2} satisfying \eqref{localr} for given time $T>0$ and supplemented with the same initial data. Then, similar to the calculations of \eqref{step3} in Step 3, we can obtain
\[\begin{aligned}
\frac{d}{dt}&(\|(n^1-n^2,u^1-u^2)\|_{L^2}^2+\|(c^1-c^2)\|_{H^1}^2)\\
\leq& C(1+\|(n^1,n^2,u^1,u^2)\|_{H^1}^2+\|(c^1,c^2)\|_{H^2}^2)(\|(n^1-n^2,u^1-u^2)\|_{L^2}^2+\|(c^1-c^2)\|_{H^1}^2)
\end{aligned} \]
for any $t\in(0,T]$. Thus, Gr\"onwall's lemma implies 
$$(n^1,c^1,u^1)(t,x)=(n^2,c^2,u^2)(t,x)\,\, \mbox{ for a.e. }\,\,(t,x)\in\mathbb{R}^3\times[0,T].$$
This completes the proof of Theorem \ref{Thlocal}.

\bigbreak
\bigbreak
\bigbreak


\vspace{2mm}

\textbf{Conflict of interest.} The authors do not have any possible conflict of interest.

\vspace{2mm}

\textbf{Data availability statement.}
 Data sharing not applicable to this article as no data sets were generated or analysed during the current study.

\bibliographystyle{abbrv} 
\parskip=0pt
\small
\bibliography{Reference}

\begin{thebibliography}{10}

\bibitem{AT2005}
S.~Abe and S.~Thurner.
\newblock Anomalous diffusion in view of {E}instein's 1905 theory of brownian
  motion.
\newblock {\em Physica A.}, 356(2/4):403--407, 2005.

\bibitem{arumugam2021}
G.~Arumugam and J.~Tyagi.
\newblock {K}eller-{S}egel chemotaxis models: A review.
\newblock {\em Acta Appl. Math.}, 171:1--82, 2021.

\bibitem{bahouri1}
H.~Bahouri, J.-Y. Chemin, and R.~Danchin.
\newblock {\em Fourier analysis and nonlinear partial differential equations}.
\newblock Springer, 2011.

\bibitem{BS}
L.~Brandolese and M.~E. Schonbek.
\newblock Large time behavior of the {N}avier-{S}tokes flow.
\newblock {\em Handbook of mathematical analysis in mechanics of viscous
  fluids, Springer, Cham}, pages 579--645, 2018.

\bibitem{cl2018}
X.~Cao and J.~Lankeit.
\newblock Global classical small-data solutions for a three-dimensional
  chemotaxis {N}avier-{S}tokes system involving matrix-valued sensitivities.
\newblock {\em Calc. Var. Partial Differ.}, 55:1--39, 2016.

\bibitem{cpx2023}
J.~A. Carrillo, Y.~Peng, and Z.~Xiang.
\newblock Global existence and decay rates to a self-consistent
  chemotaxis-fluid system.
\newblock {\em Discrete Contin. Dyn. Syst.}, 44(1), 2023.

\bibitem{ckl2013}
M.~Chae, K.~Kang, and J.~Lee.
\newblock Existence of smooth solutions to coupled chemotaxis-fluid equations.
\newblock {\em Discrete Contin. Dyn. Syst.}, 33(6):2271--2297, 2013.

\bibitem{ckl2014}
M.~Chae, K.~Kang, and J.~Lee.
\newblock Global existence and temporal decay in {K}eller-{S}egel models
  coupled to fluid equations.
\newblock {\em Commun. Part. Diff. Eq.}, 39(7):1205--1235, 2014.

\bibitem{CH1}
M.~Colombo and S.~Haffter.
\newblock Global regularity for the hyperdissipative {N}avier-{S}tokes equation
  below the critical order.
\newblock {\em J. Differ. Equ.}, 275:815–836, 2021.

\bibitem{duan2010}
R.~Duan, A.~Lorz, and P.~Markowich.
\newblock Global solutions to the coupled chemotaxis-fluid equations.
\newblock {\em Commun. Part. Diff. Eq.}, 35(9):1635--1673, 2010.

\bibitem{ZhouAIHP}
J.~Fan, A.~Alsaedi, T.~Hayat, G.~Nakamura, and Y.~Zhou.
\newblock Regularity criteria for the generalized viscous mhd equations.
\newblock {\em Ann. I. H. Poincar\'e–AN}, 24:491--505, 2007.

\bibitem{FAHNZ}
J.~Fan, A.~Alsaedi, T.~Hayat, G.~Nakamura, and Y.~Zhou.
\newblock A regularity criterion for the 3d generalized mhd equations.
\newblock {\em Math. Phys. Anal. Geom.}, 17:333--340, 2014.

\bibitem{TY1}
J.~Fan, A.~Alsaedi, T.~Hayat, G.~Nakamura, and Y.~Zhou.
\newblock Partial regularity of suitable weak solutions to the fractional
  {N}avier–{S}tokes equations.
\newblock {\em Commun. Math. Phys.}, 334:1455--1482, 2015.

\bibitem{KimJM}
J.~Fan, A.~Alsaedi, T.~Hayat, G.~Nakamura, and Y.~Zhou.
\newblock Some regularity criteria for the 3{D} generalized {N}avier–{S}tokes
  equations.
\newblock {\em Z. Angew. Math. Phys.}, 72(3):118, 2021.

\bibitem{FKPPRWZ2008PRL}
U.~Frisch, S.~Kurien, R.~Pandit, W.~Pauls, S.~S. Ray, A.~Wirth, and J.~Z. Zhu.
\newblock Hyperviscosity, galerkin truncation, and bottlenecks in turbulence.
\newblock {\em Phys. Rev. Lett.}, 101(14):144501, 2008.

\bibitem{He-2023-SIAM}
S.~He.
\newblock Enhanced dissipation and blow-up suppression in a chemotaxis-fluid
  system.
\newblock {\em SIAM J. Math. Anal.}, 55(4):2615--2643, 2023.

\bibitem{hopf1950}
E.~Hopf.
\newblock {\"U}ber die anfangswertaufgabe f{\"u}r die hydrodynamischen
  grundgleichungen. erhard schmidt zu seinem 75. geburtstag gewidmet.
\newblock {\em Math. Nachr.}, 4(1-6):213--231, 1950.

\bibitem{HuKiselevYao1}
Z.~Hu, A.~Kiselev, and Y.~Yao.
\newblock Suppression of chemotactic singularity by buoyancy.
\newblock {\em Arxiv:2305.01036, 2023.}

\bibitem{Jara2009CPAM}
M.~Jara.
\newblock Nonequilibrium scaling limit for a tagged particle in the simple
  exclusion process with long jumps.
\newblock {\em Commun. Pure Appl. Math.}, 62, 2009.

\bibitem{JW1}
Q.~Jiu and Y.~Wang.
\newblock On possible time singular points and eventual regularity of weak
  solutions to the fractional {N}avier-{S}tokes equations.
\newblock {\em Dyn. Partial Differ. Equ.}, 11(4):321–343, 2014.

\bibitem{KP1}
T.~Kato and G.~Ponce.
\newblock Commutator estimates and the {E}uler and {N}avier‐{S}tokes
  equations.
\newblock {\em Commun. Pure Appl. Math.}, 41(7):891--907, 1988.

\bibitem{kz2019}
Y.~Ke and J.~Zheng.
\newblock An optimal result for global existence in a three-dimensional
  {K}eller-{S}egel-{N}avier-{S}tokes system involving tensor-valued sensitivity
  with saturation.
\newblock {\em Calc. Var. Partial Differ.}, 58:1--27, 2019.

\bibitem{KS1970}
E.~F. Keller and L.~A. Segel.
\newblock Initiation of slime mold aggregation viewed as an instability.
\newblock {\em J. Theor. Biol.}, 26(3):399--415, 1970.

\bibitem{ks1971}
E.~F. Keller and L.~A. Segel.
\newblock Model for chemotaxis.
\newblock {\em J. Theor. Biol.}, 30(2):225--234, 1971.

\bibitem{KPV1}
G.~P. Kenig, Carlos~E. and L.~Vega.
\newblock Well-posedness of the initial value problem for the {K}orteweg-de
  {V}ries equation.
\newblock {\em J. Amer. Math. Soc.}, 4(2):323--347, 1991.

\bibitem{KimYao-2012-SIAM}
I.~Kim and Y.~Yao.
\newblock The {P}atlak-{K}eller-{S}egel model and its variations: {P}roperties
  of solutions via maximum principle.
\newblock {\em SIAM J. Math. Anal.}, 44(2):568–602, 2012.

\bibitem{lj2016}
J.~Lankeit.
\newblock Long-term behaviour in a chemotaxis-fluid system with logistic
  source.
\newblock {\em Math. Mod. Meth. Appl. S.}, 26(11):2071--2109, 2016.

\bibitem{LLZ-2022-NA}
Y.~Lei, Z.~Liu, and L.~Zhou.
\newblock Large time behavior in a fractional chemotaxis–{N}avier–{S}tokes
  system with logistic source.
\newblock {\em Nonlinear Anal. Real World Appl.}, 63:103389, 2022.

\bibitem{lery1934}
J.~Leray.
\newblock Sur le mouvement d'un liquide visqueux emplissant l'espace.
\newblock {\em Acta Math.}, 63:193--248, 1934.

\bibitem{jll1969}
J.-L. Lions.
\newblock Quelques m{\'e}thodes de r{\'e}solution des probl{\`e}mes aux limites
  non-lin{\'e}aires.
\newblock {\em Dunod}, 1969.

\bibitem{pll1996}
P.-L. Lions.
\newblock {\em Mathematical topics in fluid mechanics, {V}ol 1: Incompressible
  models. {O}xford Lecture Series in Mathematics and its Applications, 3.}
\newblock Oxford Science Publications. The Clarendon Press, 1996.

\bibitem{liu2011}
J.-G. Liu and A.~Lorz.
\newblock A coupled chemotaxis-fluid model: Global existence.
\newblock {\em Ann. I. H. Poincar\'e-AN}, 28(5):643--652, 2011.

\bibitem{Lorz2010}
A.~Lorz.
\newblock Coupled chemotaxis fluid model.
\newblock {\em Math. Models Methods Appl. Sci.}, 20(6):987--1004, 2010.

\bibitem{lorz2012}
A.~Lorz.
\newblock A coupled {K}eller--{S}egel--{S}tokes model: Global existence for
  small initial data and blow-up delay.
\newblock {\em Commun. Math. Sci.}, 10(2):555--574, 2012.

\bibitem{Mellet2011ARMA}
A.~Mellet, S.~Mischler, and C.~Mouhot.
\newblock Fractional diffusion limit for collisional kinetic equations.
\newblock {\em Arch. Rational Mech. Anal.}, 199(2):493--525, 2011.

\bibitem{NZ-2020-JDE}
Y.~Nie and X.~Zheng.
\newblock Global well-posedness for the two-dimensional coupled
  chemotaxis-generalized {N}avier-{S}tokes system with logistic growth.
\newblock {\em J. Differ. Equ.}, 269(6):5379--5433, 2020.

\bibitem{N}
L.~Nirenberg.
\newblock On elliptic partial differential equations.
\newblock {\em Annali della Scuola Normale Superiore di Pisa-Scienze Fisiche e
  Matematiche}, 13(2):115--162, 1959.

\bibitem{patlak1953}
C.~S. Patlak.
\newblock Random walk with persistence and external bias.
\newblock {\em Bull. Math. Biophys.}, 15:311--338, 1953.

\bibitem{px2019}
Y.~Peng and Z.~Xiang.
\newblock Global existence and convergence rates to a chemotaxis-fluids system
  with mixed boundary conditions.
\newblock {\em J. Differ. Equ.}, 267(2):1277--1321, 2019.

\bibitem{bookPerthame}
B.~Perthame.
\newblock {\em Transport equations in biology}.
\newblock Frontiers in Mathematics. Birkh\"auser Verlag, Basel, 2007.

\bibitem{prodi1959}
G.~Prodi.
\newblock Un teorema di unicita per le equazioni di {N}avier-{S}tokes.
\newblock {\em Ann. Mat. Pura Appl.}, 48:173--182, 1959.

\bibitem{RRS2016}
J.~C. Robinson, J.~L. Rodrigo, and W.~Sadowski.
\newblock {\em The three-dimensional Navier–Stokes equations: Classical
  theory}.
\newblock Cambridge university press, 2016.

\bibitem{gs2015}
G.~Seregin.
\newblock {\em Lecture notes on regularity theory for the {N}avier-{S}tokes
  equations}.
\newblock World Scientific, 2014.

\bibitem{serrin1961}
J.~Serrin.
\newblock {\em On the interior regularity of weak solutions of the
  {N}avier-{S}tokes equations}.
\newblock Mathematics Division, Air Force Office of Scientific Research, 1961.

\bibitem{tao2011}
Y.~Tao.
\newblock Boundedness in a chemotaxis model with oxygen consumption by
  bacteria.
\newblock {\em J. Math. Anal. Appl.}, 381(2):521--529, 2011.

\bibitem{tw2016}
Y.~Tao and M.~Winkler.
\newblock Blow-up prevention by quadratic degradation in a two-dimensional
  {K}eller-{S}egel-{N}avier-{S}tokes system.
\newblock {\em Z. Angew. Math. Phys.}, 67:1--23, 2016.

\bibitem{PANS2005}
I.~Tuval, L.~Cisneros, C.~Dombrowski, C.~W. Wolgemuth, J.~O. Kessler, and R.~E.
  Goldstein.
\newblock Bacterial swimming and oxygen transport near contact lines.
\newblock {\em Proceedings of the National Academy of Sciences},
  102(7):2277--2282, 2005.

\bibitem{veiga1995}
H.~Veiga Beirao~da.
\newblock A new regularity class for the {N}avier-{S}tokes equations in
  $\mathbb{R}^n$.
\newblock {\em Chinese Ann. Math. Ser. B}, 16(4):407--412, 1995.

\bibitem{wy2017}
Y.~Wang.
\newblock Global weak solutions in a three-dimensional
  {K}eller-{S}egel-{N}avier-{S}tokes system with subcritical sensitivity.
\newblock {\em Math. Mod. Meth. Appl. S.}, 27(14):2745--2780, 2017.

\bibitem{wwx2021}
Y.~Wang, M.~Winkler, and Z.~Xiang.
\newblock Local energy estimates and global solvability in a three-dimensional
  chemotaxis-fluid system with prescribed signal on the boundary.
\newblock {\em Commun. Part. Diff. Eq.}, 46(6):1058--1091, 2021.

\bibitem{wx2015}
Y.~Wang and Z.~Xiang.
\newblock Global existence and boundedness in a {K}eller-{S}egel-{S}tokes
  system involving a tensor-valued sensitivity with saturation.
\newblock {\em J. Differ. Equ.}, 259(12):7578--7609, 2015.

\bibitem{wx2016}
Y.~Wang and Z.~Xiang.
\newblock Global existence and boundedness in a {K}eller-{S}egel-{S}tokes
  system involving a tensor-valued sensitivity with saturation: the 3{D} case.
\newblock {\em J. Differ. Equ.}, 261(9):4944--4973, 2016.

\bibitem{wxy2016}
Z.-A. Wang, Z.~Xiang, and P.~Yu.
\newblock Asymptotic dynamics on a singular chemotaxis system modeling onset of
  tumor angiogenesis.
\newblock {\em J. Differ. Equ.}, 260(3):2225--2258, 2016.

\bibitem{Wiegner-JLMS_1987}
M.~Wiegner.
\newblock Decay results for weak solutions of the {N}avier–{S}tokes equations
  on $\mathbb{R}^n$.
\newblock {\em J. Lond. Math. Soc.}, 2(2):303--313, 1987.

\bibitem{wm2012}
M.~Winkler.
\newblock Global large-data solutions in a chemotaxis-({N}avier-){S}tokes
  system modeling cellular swimming in fluid drops.
\newblock {\em Commun. Part. Diff. Eq.}, 37(2):319--351, 2012.

\bibitem{wm2015}
M.~Winkler.
\newblock Boundedness and large time behavior in a three-dimensional
  chemotaxis-{S}tokes system with nonlinear diffusion and general sensitivity.
\newblock {\em Calc. Var. Partial Differ.}, 54:3789--3828, 2015.

\bibitem{wm2016}
M.~Winkler.
\newblock Global weak solutions in a three-dimensional
  chemotaxis-{N}avier-{S}tokes system.
\newblock {\em Ann. I. H. Poincar\'e-AN}, 33(5):1329--1352, 2016.

\bibitem{wm2016s}
M.~Winkler.
\newblock The two-dimensional {K}eller–{S}egel system with singular
  sensitivity and signal absorption: Global large-data solutions and their
  relaxation properties.
\newblock {\em Math. Mod. Meth. Appl. S.}, 26(05):987--1024, 2016.

\bibitem{wm2017}
M.~Winkler.
\newblock How far do chemotaxis-driven forces influence regularity in the
  {N}avier-{S}tokes system?
\newblock {\em T. Amer. Math. Soc.}, 369(5):3067--3125, 2017.

\bibitem{wm2019}
M.~Winkler.
\newblock A three-dimensional {K}eller-{S}egel-{N}avier-{S}tokes system with
  logistic source: Global weak solutions and asymptotic stabilization.
\newblock {\em J. Funct. Anal.}, 276(5):1339--1401, 2019.

\bibitem{wm2023JEMS}
M.~Winkler.
\newblock Does {L}eray’s structure theorem withstand buoyancy-driven
  chemotaxis-fluid interaction?
\newblock {\em J. Eur. Math. Soc.}, 25:1423--1456, 2023.

\bibitem{Wu-2004-DPDE}
J.~Wu.
\newblock The generalized incompressible {N}avier-{S}tokes equations in {B}esov
  spaces.
\newblock {\em Dyn. Partial Differ.}, 1(4):381--400, 2004.

\bibitem{Wu-2005-CMP}
J.~Wu.
\newblock Lower bounds for an integral involving fractional laplacians and the
  generalized {N}avier-{S}tokes equations in {B}esov spaces.
\newblock {\em Commun. Math. Phys.}, 263:803--831, 2005.

\bibitem{ZZZ-2021-JFA}
L.~Zeng, Z.~Zhang, and R.~Zi.
\newblock Suppression of blow-up in {P}atlak-{K}eller-{S}egel-{N}avier-{S}tokes
  system via the {C}ouette flow.
\newblock {\em J. Funct. Anal.}, 280:108967, 2021.

\bibitem{zz2014}
Q.~Zhang and X.~Zheng.
\newblock Global well-posedness for the two-dimensional incompressible
  chemotaxis-{N}avier-{S}tokes equations.
\newblock {\em SIAM J. Math. Anal.}, 46(4):3078--3105, 2014.

\bibitem{ZZ-2021-JDE}
Q.~Zhang and X.~Zheng.
\newblock Global well-posedness of axisymmetric solution to the 3{D}
  axisymmetric chemotaxis-{N}avier-{S}tokes equations with logistic source.
\newblock {\em J. Diff. Equ.}, 274:576--612, 2021.

\bibitem{LLZ-2019-NA}
S.~Zhu, Z.~Liu, and L.~Zhou.
\newblock Global existence and asymptotic stability of the fractional
  chemotaxis–fluid system in $\mathbb{R}^3$.
\newblock {\em Nonlinear Anal.}, 183:149--190, 2019.

\end{thebibliography}

\end{document}